\newtheorem{rem}{Remark}
\theoremstyle{plain}
\newtheorem{theorem}{Theorem}
\newtheorem{lemma}[theorem]{Lemma}
\newenvironment{customlemma}[1]
  {\innercustomlemma}
  {\endinnercustomthm}
\newenvironment{customtheorem}[1]
  {\innercustomthm}
  {\endinnercustomthm}
\newcommand\A{\mathcal{A}}
\newcommand\HH{{\mathscr{H}}}
\newcommand\F{\mathcal F}
\newcommand\T{\mathcal T}
\newcommand\RR{\mathbb R}
\newcommand\NN{\mathbb N}
\newcommand\PP{\mathbb P}
\newcommand\bg{{g}}
\newcommand\nn{{\boldsymbol n}}
\newcommand\xx{{\boldsymbol x}}
\newcommand\uurm{u^0}
\newcommand\vv{{\boldsymbol v}}
\newcommand\vvrm{v^0}
\newcommand\I{\mathcal{I}}
\newcommand\LRM{\Lambda^0} 
\newcommand\LTRM{{\widetilde\Lambda}_h^0}
\newcommand\bsigma{{\boldsymbol\sigma}}
\newcommand\btau{{\boldsymbol\tau}}
\newcommand{\salto}[1]{{{\llbracket} #1 {\rrbracket}}}
\newcommand{\trih}{\mathcal{T}_h}
\newcommand{\triH}{{\mathcal{T}_H}}
\newcommand{\btriH}{{\partial\mathcal{T}_H}}
\newcommand{\faceh}{\mathcal{F}_h}
\newcommand\dO{\partial\Omega}
\newcommand\dtau{{\partial\tau}}
\newcommand\OO{\Omega}
\newcommand\astab{\alpha_{\rm{stab}}}
\newcommand\amin{a_{\text{min}}}
\newcommand\amax{a_{\text{max}}}
\newcommand\Hdiv{H({\rm{div};\OO)}}
\newcommand\HdivA{H_\A({\rm{div};\OO)}}
\newcommand\Hdivtau{H({\rm{div};\tau)}}
\newcommand\VVRM{\PP^0(\triH)}
\newcommand\tildeLambda{{\widetilde\Lambda}}
\newcommand\lambdarm{\lambda^0}
\newcommand\tlambda{\tilde\lambda}
\newcommand\tlambdarmj{\tilde\lambda^{0,j}}
\newcommand\tlambdarm{\tilde\lambda^0}
\newcommand\murm{{\mu^0}}
\newcommand\tmurm{\tilde\mu_h^0}
\newcommand\VH{{H^1(\triH)}}
\newcommand\tildeVV{{\widetilde H^1(\triH)}}
\newcommand\LH{\Lambda(\triH)}
\newcommand\Lh{\Lambda_h}
\def\div{\operatorname{div}}
\def\supp{\operatorname{supp}}
\def\bgrad{\operatorname{\boldsymbol{\operatorname{\nabla}}}} 
\def\spann{\operatorname{span}} 
\definecolor{bluegreen}{rgb}{0,0.75,0.75}
\def\Nf{\rm{N^f}}
\global\long\def\sumtautH{\sum_{\tau\in\triH}}
\begin{document}
\title{Hybrid Localized Spectral Decomposition for multiscale problems}
\date{November 30, 2020}
\author{Alexandre L. Madureira}
\address{
Laborat\'orio Nacional de Computa\c c\~ao Cient\'\i fica, Petr\'opolis - RJ, Brazil} 
\address{
Funda\c c\~ao Get\'ulio Vargas, Rio de Janeiro - RJ, Brazil} 
\email{alm@lncc.br, alexandre.madureira@fgv.br}
\author{Marcus Sarkis}
\address{
Mathematical Sciences Department
Worcester Polytechnic Institute, USA}%
\email{msarkis@wpi.edu}
\begin{abstract}
We consider a finite element method for elliptic equations with heterogeneous and possibly high-contrast coefficients based on primal hybrid formulation. We assume minimal regularity of the solutions. A space decomposition as in FETI and BDCC induces an embarrassingly parallel pre-processing and leads to a final system of size independent of the coefficients. The resulting solution is in equilibrium, and all PDEs involved are elliptic. One of the problems in the pre-processing step is non-local but with exponentially decaying solutions, enabling a practical scheme where the basis functions have an extended, but still local, support. 

To make the method robust with respect to high-contrast coefficients, we enrich the space solution  via local eigenvalue problems, obtaining optimal a priori error estimate that mitigates the effect of having coefficients with different magnitudes. The technique developed is dimensional independent and easy to extend to other elliptic problems such as elasticity. 
\end{abstract}
\maketitle

\section{Introduction}\label{s:intro}\footnote{Some of the methods analyzed here have been 
previously discussed in the 8-page proceeding paper~\cite{MR3989895}.}
Consider the problem of finding the weak solution $u:\OO\to\RR$ of 
\begin{equation}\label{e:elliptic}
\begin{gathered}
-\div\A\bgrad u=g\quad\text{in }\OO,
\\
u=0\quad\text{on }\partial\OO,
\end{gathered}
\end{equation}
where $\OO\subset\RR^d$ with $d=2$ or $3$ for simplicity, and is an open bounded domain with polyhedral boundary $\partial\Omega$, the symmetric tensor $\A\in[L^\infty(\OO)]_{\text{sym}}^{d\times d}$ is uniformly positive definite and bounded, and $g$ is part of the given data. 

It is hard to approximate such problem in its full generality using numerical methods, in particular because of the low regularity of the solution and its multiscale behavior. Most convergent proofs either assume extra regularity or special properties of the coefficients~\cites{AHPV,MR3050916,MR2306414,MR1286212,babuos85,MR1979846,MR2058933,HMV,MR1642758,MR3584539,MR2030161,MR2383203,vs1,vs2,MR2740478}. Some methods work even considering that the solution has low regularity~\cites{MR2801210,MR2753343,MR3225627,MR3177856,MR2861254} 
but are based on ideas that differ considerably from what we advocate here
and do not cover in depth the high-contrast case. 

As in many multiscale methods previously considered, our starting point is the decomposition of the solution space into \emph{fine} and \emph{coarse} spaces that are adapted to the problem of interest. The exact definition of some basis functions requires solving global problems, but, based on decaying properties, only local computations are required, although these are not restricted to a single element. It is interesting to notice that, although the formulation is based on hybridization, the final numerical solution is defined by a sequence of elliptic problems. 

The idea of using exponential decay to localize global problems was already considered by the interesting approach developed under the name of \emph{Localized Orthogonal Decomposition} (LOD)~\cites{MR2831590,MR3591945,MR3246801,MR3552482} which are
related to ideas of \emph{Variational Multiscale Methods}~\cites{MR1660141,MR2300286}. In their case, convergence follows from a special orthogonality property.

One difficulty that hinders the development of efficient methods is the presence of high-contrast coefficients~\cites{MR3800035,MR2684351,MR2753343,MR3704855,MR3225627,MR2861254}. When LOD or VMS methods are considered, high-contrast coefficients might slow down the exponential decay of the solutions, making the method not so practical. Here in this paper, in the presence of rough coefficients, spectral techniques are employed to overcome such hurdle, and by solving local eigenvalue problems we define a space where the exponential decay of solutions is insensitive to high-contrast coefficients. Additionally, the spectral techniques remove 
macro-elements corner singularities that occur in LOD methods based on
mixed finite elements. We note the proposal in~\cite{CHUNG2018298} of generalized multiscale finite element methods based on eigenvalue problems inside the macro elements, with basis functions with support weakly dependent of the log of the contrast. Here, we propose eigenvalue problems based on edges of macro element removing the dependence
of the contrast. 

We now further detail the problem under consideration. For almost all $\xx\in\OO$ let the positive constants $\amin$ and $\amax$ be such that 
\begin{equation}\label{e:bounds}
\amin|\vv|^2
\le a_-(\xx)|\vv|^2\le\A(\xx)\,\vv\cdot\vv
\le a_+(\xx)|\vv|^2\le\amax|\vv|^2
\quad\text{for all }\vv\in\RR^d,
\end{equation}
where $a_-(\xx)$ and $a_+(\xx)$ are the smallest and largest eigenvalues of 
$\A(\xx)$.

The remainder of the this paper is organized as follows. Section~\ref{Hybrid} describes a suitable primal hybrid formulation for the problem~\eqref{e:elliptic}, which is followed in Section~\ref{s:Hlocal} by its a discrete formulation. A discrete space decomposition is introduced to transform the discrete saddle-point problem into a sequence of elliptic discrete problems. The analysis of the exponential decay of the multiscale basis function is considered in Section~\ref{ss:lowcon}. To overcome the possible deterioration of the exponential decay for high-contrast coefficients, in Section~\ref{ss:dhc} the \emph{Localized Spectral Decomposition} (LSD) method is designed and fully analyzed. To allow an efficient pre-processing numerical scheme, Section~\ref{ss:findim} discusses how to reduce the right-hand side space dimension without losing a target accuracy, and also develops $L^2(\Omega)$ a priori error estimates. Section~\ref{s:Algorithms} gives a global overview of the LSD \emph{algorithm} proposed. Appendix~\ref{s:Auxiliaryresults} provides some mathematical tools and Appendix~\ref{s:Notations} refers to a notation library for the paper. 

\section{Continuous Problem using Hybrid Formulation} \label{Hybrid} 
We start by recasting the continuous problem in a weak formulation that depends on a polyhedral regular mesh $\triH$ and let $\F_H$ be the set of faces on $\triH$ --- we assume that each element has diameter smaller than $H$, and at most $\Nf$ faces. \label{p:triHdef} Without loss of generality we adopt above and in the remainder of the text the terminology of three-dimensional domains, denoting for instance the boundaries of the elements by faces. For a given element $\tau\in\triH$ let $\dtau$ denote its boundary and $\nn^\tau$ the unit size normal vector that points outward $\tau$. \label{p:nndef}
We denote by $\nn$ the outward normal vector on $\dO$. Consider now the following spaces:
\begin{equation}\label{e:spaces}
\begin{gathered}
  \VH=\{v\in L^2(\OO):\,v|_\tau\in H^1(\tau),\,\tau\in\triH\},
  \\
  \LH=\biggl\{\prod_{\tau\in\triH}\btau\cdot\nn^\tau|_{\dtau}:\,\btau\in\Hdiv\biggr\}
  \subsetneq\prod_{\tau\in\triH}H^{-1/2}(\dtau).
\end{gathered}
\end{equation}
For $w$, $v\in \VH$ and $\mu\in\LH$ define 
\begin{gather*}\label{p:inner}
  (w,v)_{\triH}=\sum_{\tau\in\triH}\int_\tau wv\,d\xx, 
  \qquad
  (\mu,v)_{\btriH}=\sum_{\tau\in\triH} (\mu,v)_{\dtau},
\end{gather*}
where $(\cdot,\cdot)_{\dtau}$ is the dual product involving $H^{-1/2}(\dtau)$ and $H^{1/2}(\dtau)$. Then
\[
(\mu,v)_{\dtau}=\int_\tau\div\bsigma v\,d\xx+\int_\tau\bsigma\cdot\bgrad v \,d\xx
\]
for all $\bsigma\in\Hdivtau$ such that $\bsigma\cdot\nn^\tau=\mu$. We also define 
\begin{equation}\label{e:norms}
\begin{gathered}
\|\bsigma\|_{\HdivA}^2=\|\A^{-1/2}\bsigma\|_{0,\Omega}^2+H^2\|\div\bsigma\|_{0,\Omega}^2, 
\\ 
|v|_{H_\A^1(\triH)}^2=\sumtautH\|\A^{1/2}\bgrad v\|_{0,\tau}^2, 
\qquad
\|\mu\|_{H_\A^{-1/2}(\triH)}=\inf_{\substack{\bsigma\in\Hdiv\\\bsigma\,\nn^\tau=\mu\text{ on } \dtau,\,\tau\in\triH}}\|\bsigma\|_{\HdivA}.
\end{gathered}
\end{equation}
We use analogous definitions on subsets of $\triH$, in particular when the subset consists of a single element $\tau$ (and in this case we write $\tau$ instead of $\{\tau\}$). We note that since $\amin$ is positive and $\amax$ is bounded, then $\|\cdot\|_{\HdivA}$ and $|\cdot|_{H_\A^1(\triH)}$ are equivalent to the usual norms $\|\cdot\|_{\Hdiv}$ and $|\cdot|_{H^1(\triH)}$. 

In the primal hybrid formulation~\cite{MR0431752}, $u\in\VH$ and $\lambda\in\LH$ are such that 
\begin{equation}\label{e:weak-hybridH}
\begin{alignedat}{3}
&(\A\bgrad u,\bgrad v)_{\triH}-(\lambda,v)_{\btriH}&&=(g,v)_{\triH}\quad&&\text{for all }v\in\VH,
\\
&(\mu,u)_{\btriH}&&=0\quad&&\text{for all }\mu\in\LH.
\end{alignedat}
\end{equation}
Following~\cite{MR0431752}*{Theorem 1}, it is possible to show that the solution $(u,\lambda)$ of~\eqref{e:weak-hybridH} is such that $u\in H_0^1(\Omega)$ satisfies~\eqref{e:elliptic} in the weak sense and $\lambda=\A\bgrad u\cdot\nn^\tau$ for all elements $\tau$. 

In the spirit of the Multiscale Hybrid Methods~\cites{AHPV,HMV,HPV,MR3584539} and FETI methods~\cites{FETI,totalfeti,MR1285024,MR2282408,MR2104179}, we consider the decomposition
\begin{equation*}\label{e:VHdecomp}
\VH=\VVRM\oplus\tildeVV, 
\end{equation*}
where $\VVRM$ is the space of piecewise constants, and $\tildeVV$ is its $L^2(\tau)$ orthogonal complement, i.e., the space of functions with zero average within each element $\tau\in\triH$: 
\begin{equation}\label{e:poHtdef}
\begin{gathered}
  \VVRM=\{v\in\VH:\,v|_\tau\text{ is constant, }\tau\in\triH\},
  \\
  \tildeVV=\{\tilde v\in\VH:\,\int_\tau\tilde v\,d\xx=0,\,\tau\in\triH\}, 
  \qquad
  \widetilde H^1(\tau)=\{\tilde v|_\tau:\,\tilde v\in\VH\}. 
\end{gathered}
\end{equation}
We then write $u=\uurm+\tilde u$, where $\uurm\in\VVRM$ and $\tilde u\in\tildeVV$, and find from~\eqref{e:weak-hybridH} that 
\begin{equation}\label{e:weak-e1}
\begin{alignedat}{3}
  &(\lambda,\vvrm)_{\btriH}&&=-(g,\vvrm)_{\triH}\quad&&\text{for all }\vvrm\in\VVRM,
  \\
  &(\mu,\uurm+\tilde u)_{\btriH}&&=0\quad&&\text{for all }\mu\in\LH,
\end{alignedat} 
\end{equation}
and that
\begin{equation}\label{e:weak-e3}
  (\A\bgrad\tilde u,\bgrad\tilde v)_{\triH}
  =(\lambda,\tilde v)_{\btriH}+(g,\tilde v)_{\triH}
  \quad\text{for all }\tilde v\in\tildeVV. 
\end{equation}
Let $T:\LH\to\tildeVV$ and $\tilde T:L^2(\Omega)\to\tildeVV$ be such that, for $\mu\in\LH$, $\bg\in L^2(\Omega)$ and $\tau\in\triH$, \label{p:Tdef}
\begin{equation}\label{e:definitionT}
  \int_\tau\A\bgrad(T\mu)\cdot\bgrad\tilde v\,d\xx
  =(\mu,\tilde v)_{\partial\tau},
  \qquad
  \int_\tau\A\bgrad(\tilde T\bg)\cdot\bgrad\tilde v\,d\xx
  =(\bg,\tilde v)_\tau\quad\text{for all } \tilde v\in \tildeVV.
\end{equation}
It follows from the above definition that $\tilde T\bg=0$ if $\bg\in\VVRM$, and that, for all $\mu\in\LH$, 
\begin{equation}\label{e:Ttildecommute}
  (\mu,\tilde T\bg)_{\btriH}
  =\sum_{\tau\in\triH}\int_\tau\A\bgrad(T\mu)\cdot\bgrad(\tilde T\bg)\,d\xx
  =(\bg,T\mu)_{\triH}. 
\end{equation}

Note from~\eqref{e:weak-e3} that $\tilde u=T\lambda+\tilde T\bg$, and substituting in~\eqref{e:weak-e1}, we have that $\uurm\in\VVRM$ and $\lambda\in\LH$ solve 
\begin{equation}\label{e:mc}
\begin{alignedat}{3}
  &(\mu,T\lambda)_{\btriH}+(\mu,\uurm)_{\btriH}&&=-(\mu,\tilde T\bg)_{\btriH}
  \quad&&\text{for all }\mu\in\LH, 
  \\
  &(\lambda,\vvrm)_{\btriH}&&=-(\bg,\vvrm)_{\triH}
  \quad&&\text{for all }\vvrm\in\VVRM.
\end{alignedat}
\end{equation}
We use these unknowns $\uurm$ and $\lambda$ to reconstruct $u$ and the flux $\bsigma$ as follows:
\begin{equation}\label{e:disp-stress}
u=\uurm+\tilde u=\uurm+T\lambda+\tilde T\bg,
\qquad
\bsigma=\A\bgrad(T\lambda+\tilde T\bg).
\end{equation}

\begin{rem}
With the above definitions, it is possible to rewrite the energy norm as below:
\[
|T\lambda|_{H_\A^1(\triH)}^2
=\sum_{\tau\in\triH}(\lambda,T\lambda)_{\partial\tau}
=\|\A^{-1/2}\bsigma_\lambda\|_{L^2(\Omega)}^2,
\quad\text{where }\bsigma_\lambda=\A\bgrad T\lambda. 
\]
\end{rem}
\section{Hybrid Localized Finite Elements}\label{s:Hlocal}
In this section we decompose the space $\LH$ into a direct sum of subspaces, and then rewrite problem~\eqref{e:mc} into an equivalent form that is convenient for our purposes. Consider $\faceh$ be a partition of the faces of elements in $\triH$, refining them in the sense that every (coarse) face of the elements in $\triH$ can be written as a union of faces of $\faceh$. Let $\Lh\subset\Lambda(\triH)$ be the space of piecewise constants on $\faceh$, i.e.,
\begin{equation}\label{e:lambdaconst}
\Lh
=\biggl\{\prod_{\tau\in\triH}\btau\cdot\nn^\tau|_{\dtau}:\,\btau\in\Hdiv,\,\btau\cdot\nn^\tau|_{F_h}\text{ is constant on each face } F_h \in \faceh\biggr\}
\subsetneq\LH.
\end{equation}

There is a delicate aspect regarding the space $\Lambda(\triH)$, since  their elements are \emph{functionals}. In particular, an element   $\mu_h\in\Lh\subset\Lambda(\triH)$ have ``different values,'' with opposite signs at each side of a face $F_h\in\faceh$, if $F_h$ is not a subset of $\dO$. However, to simplify notation, we write $\mu_h|_{F_h}$ to indicate the restriction of $\mu_h$ to $F_h$.

To simplify the presentation we do not discretize $H^1(\tau)$ and $\Hdivtau$ for $\tau\in\triH$. We remark that the method developed here extends easily when we discretize $\Hdivtau$ by simplices or cubic elements with lowest order Raviart--Thomas spaces~\cite{MR1759911}, or discretize $H^1(\tau)$ fine enough to resolve the 
heterogeneities of $\A(x)$ and to satisfy inf-sup conditions with 
respect to the space $\Lh$.  

We pose then the problem of finding $\uurm_h\in\VVRM$ and $\lambda_h\in\Lh$ such that 
\begin{equation}\label{e:weak-hybridh}
\begin{alignedat}{3}
  &(\mu_h,T\lambda_h)_{\btriH}+(\mu_h,\uurm_h)_{\btriH}
  &&=-(\mu_h,\tilde T\bg)_{\btriH}\quad&&\text{for all }\mu_h\in\Lh, 
  \\
  &(\lambda_h,\vvrm)_{\btriH}&&=-(\bg,\vvrm)_{\triH}
  \quad&&\text{for all }\vvrm\in\VVRM. 
\end{alignedat}
\end{equation}
Since~\eqref{e:weak-hybridh} is finite dimensional, it is well-posed if and only if it is injective. Assuming that $\bg=0$, we easily gather that $\lambda_h=0$ and $\uurm_h=0$; see Lemma~\ref{l:lrmsystem}. We define our approximation as in~\eqref{e:disp-stress}, by 
\begin{equation}\label{e:disp-stressh}
u_h=\uurm_h+T\lambda_h+\tilde T\bg,\qquad \bsigma_h=\A\bgrad(T\lambda_h+\tilde T\bg).
\end{equation}
Simple substitutions yield that $u_h$, $\lambda_h$ solve~\eqref{e:weak-hybridH} if $\Lambda$ is replaced by $\Lh$, i.e.,
\begin{equation}\label{e:weak-hybridHh}
\begin{alignedat}{3}
&(\A\bgrad u_h,\bgrad v)_{\triH}-(\lambda_h,v)_{\btriH}&&=(g,v)_{\triH}\quad&&\text{for all }v\in\VH,
\\
&(\mu_h,u_h)_{\btriH}&&=0\quad&&\text{for all }\mu_h\in\Lh.
\end{alignedat}
\end{equation}

We assume that $\Lh$ is chosen fine enough so that 
\begin{equation}\label{e:HHdef}
  |u-u_h|_{H_\A^1(\triH)}^2
  =\bigl(\lambda-\lambda_h,T(\lambda-\lambda_h)\bigr)_{\triH}
 \le \HH^2 \|g\|_{L^2(\Omega)}^2,  
  \end{equation}
where $\HH$ represents a ``target precision'' the method should achieve. For instance, one could choose $\HH=H$ or $h^s$, for $0<s\le1$. 
It must be mentioned that $\lambda_h$ is never computed, the main goal
of this paper is to develop an efficient approximation of order $\HH$
for $\lambda_h$ using $O(H^{-d})$ degrees of freedom. 

Above, and in what follows, $c$ denotes an arbitrary constant that does not depend on $H$, $\HH$, $h$, $\A$, depending only on the shape regularity of the elements of $\triH$. 

Taking a further step, we decompose $\Lh$ into a space of ``constants'' plus ``zero-average'' functionals over the border of the elements of $\triH$. For each $\tau_i\in\triH$, let $\lambdarm_i\in\Lh$ such that 
\begin{equation}\label{e:lrmdef}
  (\lambdarm_i,v)_{\btriH}=\int_{\partial \tau_i}\salto v\,d\xx
  \quad\text{for all }v\in H^1(\triH),
\end{equation}
where $\salto\cdot$ denotes the jump operator, defined as follows. For each face $F$ belonging to the boundaries of two different elements $\tau_i$, $\tau_j$, fix $\nn_F$ as the constant unitary normal vector pointing either inward or outward. If $\nn_F$ is oriented from $\tau_i$ to $\tau_j$, let $\salto v=v_i|_F-v_j|_F$, where $v_i=v|_{\tau_i}$, if not $\salto v=v_j|_F-v_i|_F$. As usual, if $F$ belongs to $\partial\Omega$, then $\nn_F=\nn$ points outward and $\salto v=v$. 
\begin{rem}
It is also possible to define $\lambdarm_i$ explicitly, given $\tau_i\in\triH$. Consider the face $F\in\F_H$. If $F$ does not belong to $\dtau_i$ then $\lambdarm_i|_F=0$. If it does, $\lambdarm_i|_{F\cap\dtau_i}=-\lambdarm_i|_{F\cap\dtau_i}=1$ or $-1$ depending whether $\nn_F$ points outward or inward of $\tau_i$, respectively. Note that $\lambdarm_i\in\Lambda_h$.
\end{rem}

Let $N$ be the number of elements of $\triH$ and 
\begin{equation}\label{e:lammbdadecomp}
\begin{gathered}
\LRM=\spann\{\lambdarm_i:\,i=1,\dots,N\},
\\
\tildeLambda_h
=\VVRM^\perp
=\{\mu_h\in\Lh:\,(\mu_h,\vvrm)_{\btriH}=0\text{ for all }\vvrm\in\VVRM\}. 
\end{gathered}
\end{equation}
We can now decompose $\Lh=\LRM\oplus \tilde\Lambda_h$ as follows~\cite{MR2759829}. Given $\mu_h\in\Lh$, let $\murm\in\LRM$ and $\tilde\mu_h\in\tildeLambda_h$ such that 
\begin{equation*}
(\murm,\vvrm)_{\btriH}=(\mu_h,\vvrm)_{\btriH}, \qquad(\tilde\mu_h,v)_{\btriH}=(\mu_h,v)_{\btriH}-(\murm,v)_{\btriH}, 
\end{equation*}
for all $\vvrm\in\VVRM$ and $v\in \VH$. Note that $\tilde\mu_h\in\tildeLambda_h$ since $(\tilde\mu_h,\vvrm)_{\btriH}=(\mu_h,\vvrm)_{\btriH}-(\murm,\vvrm)_{\btriH}=0$, and $\mu_h = \murm + \tilde\mu_h $. 

We also decompose $\tildeLambda_h=\LTRM\oplus\tildeLambda_h^f$.\label{p:Lfdef}
Basically, the elements of $\LTRM$ are constants on each element face of $\trih$ but still with zero average over the element boundaries, and the elements of $\tildeLambda_h^f$ have zero average on each face: 
\begin{equation}\label{e:lambdafaces}
\begin{gathered}
  \LTRM=\{\mu_h\in\tildeLambda_h:\,\mu_h|_F\text{ is constant for each face }F\subset\partial \tau,\,\tau\in\triH\},
  \\
  \tildeLambda_h^f=\{\mu_h\in\tildeLambda_h:\,\int_F\mu_h\,ds=0\text{ for each face }F\subset\partial\tau,\,\tau\in\triH\}. 
\end{gathered}
\end{equation}

Considering again~\eqref{e:weak-hybridh}, from the decomposition for $\Lh$, we gather that $\lambda_h=\lambdarm+\tlambdarm_h+\tilde\lambda^f_h$.\label{p:ldecomp}
Thus, $\uurm_h\in\VVRM$, $\lambdarm\in\LRM$, $\tlambdarm_h\in\LTRM$ and $\tilde\lambda_h\in\tilde\Lambda_h^f$ solve
\begin{equation}\label{e:mc3}
\begin{alignedat}{3}
  &(\lambdarm,\vvrm)_{\btriH}&&
  =-(\bg,\vvrm)_{\triH}\quad&&\text{for all }\vvrm\in\VVRM,
  \\
  &(\tilde\mu_h^f,T\lambdarm+T\tlambdarm_h+T\tilde\lambda^f_h)_{\btriH}&&
  =-(\tilde\mu_h^f,\tilde T\bg)_{\btriH}\quad&&\text{for all }\tilde\mu^f_h\in\tilde\Lambda_h^f, 
  \\
  &(\tmurm,T\lambdarm+T\tlambdarm_h+T\tilde\lambda_h^f)_{\btriH}&&
  =-(\tmurm,\tilde T\bg)_{\btriH}\quad&&\text{for all }\tmurm\in\LTRM, 
  \\
  &(\murm,T\lambdarm+T\tlambdarm_h+T\tilde\lambda_h^f)_{\btriH}+(\murm,\uurm_h)_{\btriH}&&
  =-(\murm,\tilde T\bg)_{\btriH}\quad&&\text{for all }\murm\in\LRM. 
\end{alignedat}
\end{equation}

It is possible to compute the unknowns step-by-step as we detail below. After that we discuss the well-posedness of each problem. The first equation of~\eqref{e:mc3} determines $\lambdarm$. To deal with the second equation, we define the operator $P:\VH\to\tilde\Lambda_h^f$ such that, for $w\in\VH$, 
\begin{equation}\label{e:Pdef}
(\tilde\mu^f_h,TPw)_{\btriH}
=(\tilde\mu^f_h,w)_{\btriH}
\quad\text{for all }\tilde\mu^f_h\in\tilde\Lambda^f_h, 
\end{equation}
i.e., $\bigl(\tilde\mu_h^f,(I-TP)w\bigr)_{\btriH}=0$. Note that $PT$ is an orthogonal projection from $\Lambda_h$ to $\tilde\Lambda^f_h$ since 
\begin{equation*}
(\tilde\mu^f_h,TPT\lambda_h)_{\btriH}
=(\tilde\mu_h^f,T\lambda_h)_{\btriH}
\quad\text{for all }\tilde\mu_h^f\in\tilde\Lambda^f_h.
\end{equation*}

The second equation of~\eqref{e:mc3} becomes 
\begin{equation}\label{e:lambdadef}
\tilde\lambda_h^f=-P(T\lambdarm+T\tlambdarm_h+\tilde T\bg).
\end{equation}
Solving~\eqref{e:Pdef} efficiently is crucial for the good performance of the method, since it is the only large dimensional system of~\eqref{e:mc3}, in the sense that its size grows with order of $h^{-d}$.
This issue is treated in Section~\ref{ss:lowcon} by taking into
account the exponential decay of $PT(\lambdarm+\tlambdarm_h)$. It is
also required to compute or to approximate $\tilde T\bg$ and $P\tilde T\bg$
efficiently. These issues are treated in Sections~\ref{ss:lowcon} and~\ref{ss:findim}. 

Now, we can write the third equation of~\eqref{e:mc3} as
\begin{equation}\label{e:lrmaux}
(\tmurm,T\tlambdarm_h)_{\btriH}
=-(\tmurm,\tilde T\bg)_{\btriH}-\bigl(\tmurm,T\lambdarm-TP(T\lambdarm+T\tlambdarm_h+\tilde T\bg)\bigr)_{\btriH}
\end{equation}
and then
\begin{equation*}
(\tmurm,T\tlambdarm_h-TPT\tlambdarm_h)_{\btriH}
=-(\tmurm,\tilde T\bg-TP\tilde T\bg)_{\btriH}-(\tmurm,T\lambdarm-TPT\lambdarm)_{\btriH}. 
\end{equation*}
Since $PT\tmurm\in\tilde\Lambda_h^f$, 
\begin{multline*}
  \bigl(\tmurm-PT\tmurm,(I-TP)T\tlambdarm_h\bigr)_{\btriH}
  \\
  =-(\tmurm-PT\tmurm,(I-TP)\tilde T\bg)_{\btriH}-(\tmurm-PT\tmurm,(I-TP)T\lambdarm)_{\btriH}. 
\end{multline*}
Thus, $\tlambdarm_h$ is computed from 
\begin{multline}\label{e:tlrmeqtn}
\bigl((I-PT)\tmurm,T(I-PT)\tlambdarm_h\bigr)_{\btriH}
=-\bigl((I-PT)\tmurm,(I-TP)\tilde T\bg\bigr)_{\btriH}
\\
-\bigl((I-PT)\tmurm,T(I-PT)\lambdarm\bigr)_{\btriH}
\quad\text{for all }\tmurm\in\LTRM, 
\end{multline}
and $\tilde\lambda^f_h$ is recovered from~\eqref{e:lambdadef}.

We note that $((I-PT)\tmurm,TP\tilde T\bg\bigr)_{\btriH} = 0$ since
\[
(PT\tmurm,TP\tilde T\bg\bigr)_{\btriH} = (TPT\tmurm,P\tilde T\bg\bigr)_{\btriH}
= (T\tmurm,P\tilde T\bg\bigr)_{\btriH} = (\tmurm,TP\tilde T\bg\bigr)_{\btriH}
\]
where we have used the symmetry of $T$, the definition of $P$ and again
the symmetry of $T$. This simplification in ~\eqref{e:tlrmeqtn} is very
important since $P\tilde T\bg $  cannot be pre-processed. Finally, the fourth equation of~\eqref{e:mc3} yields $\uurm_h$, and the post-processing~\eqref{e:disp-stressh} recovers the main variables:
\begin{equation}\label{disp-stress2}
  u_h=\uurm_h+T(\lambdarm+\tlambdarm_h+\tilde\lambda^f_h)+\tilde T\bg,
  \qquad
  \bsigma_h=\A\bgrad [T(\lambdarm+\tlambdarm_h+\tilde\lambda^f_h)+\tilde T\bg], 
\end{equation}
yielding $u_h$ and $\lambda_h=\lambdarm+\tlambdarm_h+\tilde\lambda^f_h$, solutions of~\eqref{e:weak-hybridh}.

To show the existence and uniqueness of solutions for~\eqref{e:mc3}, we proceed by parts. The existence of solution for the first equation follows from Lemma~\ref{l:lrmsystem}. Solving the second equation is equivalent to~\eqref{e:Pdef}, and such system is well-posed due to the coercivity of $(\cdot,T\cdot)_{\btriH}$ on $\tilde\Lambda_h^f$; see~\cites{AHPV,HMV} and~\cites{MR1802366,MR1921914,MR2104179}. The same arguments hold for the third equation of~\eqref{e:mc3}, rewritten in~\eqref{e:lrmaux}. Another way to see this is to consider~\eqref{e:tlrmeqtn} with zero right hand side. From the coercivity of $(\cdot,T\cdot)_{\btriH}$ on $\tildeLambda$ we have $(I-PT)\tlambdarm_h=0$. But since $\LTRM\cap\tilde\Lambda_h^f=\{0\}$, then $\tlambdarm_h=0$. Finally, the fourth equation of~\eqref{e:mc3} is again finite dimension, and if $(\murm,\uurm_h)_{\btriH}=0$ for all $\murm\in\LRM$, then, from Lemma~\ref{l:lrmsystem}, $\uurm_h=0$.

Henceforth, we consider the following algorithm, based on~\eqref{e:mc3}:
\begin{enumerate}[(i)]
\item{\bf Find $\lambdarm\in\LRM$ from the first equation of~\eqref{e:mc3}}. 
\item{\bf Find $\tlambda_h^0$ from~\eqref{e:tlrmeqtn}.} 
\item{\bf Find $\tilde\lambda_h^f$ from~\eqref{e:lambdadef}.}
\item{\bf Find $u_h^0$ from from the fourth equation of~\eqref{e:mc3}.}
\item{\bf Compute $u_h$ and $\bsigma_h$ from~\eqref{disp-stress2}.} 
\end{enumerate}
Except for (ii), all steps above above can be performed efficiently as the matrices involved are sparse and either local or independent of $h$. Solving~\eqref{e:tlrmeqtn} on the other hand involves computing the  $h$-dependent, global operator $P$, leading to a dense matrix in~\eqref{e:tlrmeqtn}. From now on, we concentrate on approximating $P$ so that~\eqref{e:tlrmeqtn} can be accurately and efficiently solved.

The key to approximate~\eqref{e:tlrmeqtn} is the exponential decay of $Pw$, as long as $w\in\VH$ has local support. That allows replacing $P$ by a semi-local operator $P^j$. That works fine for low-contrast coefficients and is the subject of Section~\ref{ss:lowcon}. For high-contrast coefficients however, the exponential decay rate is smaller, and to circumvent that we consider in Section~\ref{ss:dhc} a spectral decomposition of $\tilde\Lambda_h^f$. 

\subsection{Decaying High-Contrast}\label{ss:dhc}
It is essential for the  performing method that the static condensation is done efficiently. The solutions of~\eqref{e:Pdef} decay exponentially fast if $w$ has local support, so instead of solving the problems in the whole domain it would be reasonable to solve it locally using patches of elements. We note that the idea of performing global static condensation goes back to the Variational Multiscale Finite Element Method--VMS~\cites{MR1660141,MR2300286}. Recently variations of the VMS
and denoted by Localized Orthogonal Decomposition Methods--LOD were introduced and analyzed in~\cites{MR3246801,MR2831590,MR3552482,MR3591945}. 

The main bottle-neck in dealing with high-contrast coefficients is that the decay is slower, albeit still exponential, forcing the use of larger patches. To deal with this situation, we use a subspace of $\tilde\Lambda_h^f$ to augment $\LTRM$ by selecting eigenfunctions associated to a proper generalized eigenvalue problem associated to each face of the mesh $\triH$. We then remove the ``slow decaying modes''from $\tilde\Lambda_h^f$. To define these  generalized eigenvalue problems, we first introduce some theoretical tools for high-contrast coefficients.


For $K\in\triH$, define $\T_0(K)=\emptyset$, $\T_1(K)=\{K\}$, and for $j=1,2,\dots$ let 
\begin{equation}\label{e:taujk}
\T_{j+1}(K)=\{\tau\in\triH:\,\overline\tau\cap\overline\tau_j\ne\emptyset\text{ for some }\tau_j\in\T_j(K)\}. 
\end{equation}

Let $\tau\in\triH$, $F$ a face of $\partial\tau$, and let $F_\tau^c=\partial\tau\backslash F$. \label{p:Fcdef}
Define
\begin{equation}\label{e:hcspaces}
\tilde\Lambda_h^\tau
=\{\tilde\mu_h|_{\partial\tau}:\,\tilde\mu_h\in\tildeLambda^f_h\},
\qquad
\tilde\Lambda_h^F
=\{\tilde\mu_h|_{F}:\,\tilde\mu_h\in\tilde\Lambda_h^f\},
\qquad
\tilde\Lambda_h^{F_\tau^c}
=\{\tilde\mu_h|_{F_\tau^c}:\,\tilde\mu_h\in\tilde\Lambda_h^f\}.
\end{equation}
Abusing the notation, we identify $\tilde\Lambda_h^\tau=\tilde\Lambda_h^F\times\tilde\Lambda_h^{F_\tau^c}$, and for any given $\tilde\mu_h^\tau\in\tilde\Lambda_h^\tau$, denote $\tilde\mu_h^\tau=\{\tilde\mu_h^F,\tilde\mu_h^{F_\tau^c}\}\in\tilde\Lambda_h^F\times\tilde\Lambda_h^{F_\tau^c}$. Define
\begin{gather*}
T_{FF}^\tau: \tilde\Lambda_h^F \to(\tilde\Lambda_h^F)^\prime, 
\qquad
T_{F^cF}^\tau: \tilde\Lambda_h^{F} \to(\tilde\Lambda_h^{F_\tau^c})^\prime,
\\
T_{FF^c}^\tau: \tilde\Lambda_h^{F_\tau^c} \to(\tilde\Lambda_h^{F})^\prime,
\qquad
T_{F^cF^c}^\tau: \tilde\Lambda_h^{F_\tau^c} \to(\tilde\Lambda_h^{F_\tau^c})^\prime,
\end{gather*}
by
\begin{multline*}
  (\tilde\mu_h,T \tilde\mu_h)_{\partial\tau}=: \bigl(\{\tilde\mu_h^F,\tilde\mu_h^{F_\tau^c}\},
  T\{\tilde\mu_h^F,\tilde\mu_h^{F_\tau^c}\}\bigr)_{\partial\tau}
  \\
=(\tilde\mu^F_h, T_{FF}^\tau\tilde\mu_h^F)_F +
(\tilde\mu^F_h, T_{FF^c}^\tau\tilde\mu_h^{F_\tau^c})_F +
(\tilde\mu^{F_\tau^c}_h,T_{F^cF}^\tau\tilde\mu_h^{F})_{F_\tau^c} +
(\tilde\mu^{F_\tau^c}_h,T_{F^cF^c}^\tau\tilde\mu_h^{F_\tau^c})_{F_\tau^c}.
\end{multline*}
We remind that $T:\tildeLambda_h^\tau\to\widetilde H^1(\tau)$, satisfies
\begin{equation}\label{p:Ttaudef}
(\mathcal A\bgrad(T\tilde\mu^\tau_h),\bgrad v)_\tau
=(\tilde\mu_h^\tau,v)_{\partial\tau}\quad\text{for all }v\in\widetilde H^1(\tau), 
\end{equation}
and $\A\bgrad(T \tilde\mu^\tau_h)\cdot\nn^\tau=\tilde\mu_h$ on $\partial\tau$.

It follows from the properties of $T$ that both $T_{FF}^\tau$ and $T_{F^cF^c}^\tau$ are symmetric and positive definite matrices, and follow from Schur complement arguments that for any  $\{\tilde\mu^F_h,\tilde\mu^{F_\tau^c}_h\}\in\tilde\Lambda_h^\tau$
\begin{multline}\label{e:inequalityenergy2}
(\tilde\mu^F_h, \hat{T}_{FF}^\tau\tilde\mu_h^F)_F
=\min_{\tilde\nu^{F_\tau^c}_h\in\tilde\Lambda_h^{F_\tau^c}}
(\{\tilde\mu^F_h,\tilde\nu^{F_\tau^c}_h\},T\{\tilde\mu_h^F,\tilde\nu^{F_\tau^c}_h\})_{\partial \tau}
\\
\le \bigl(\{\tilde\mu_h^F,\tilde\mu_h^{F_\tau^c}\},
T\{\tilde\mu_h^F,\tilde\mu_h^{F_\tau^c}\}\bigr)_{\partial\tau}
=(\tilde\mu_h^\tau,T\tilde\mu_h^\tau)_{\partial\tau}, 
\end{multline}
where
\[
\hat T_{FF}^\tau=T_{FF}^\tau-T_{FF^c}^\tau(T_{F^cF^c}^\tau)^{-1}T_{F^cF}^\tau, 
\]
and the minimum is attained at $\tilde\nu_h^{F_\tau^c}=-(T_{F^cF^c}^\tau)^{-1}T_{F^cF}^\tau\tilde\mu^F_h$. In what follows, to take into account high contrast coefficients, we consider the following generalized eigenvalue problem: Find eigenpairs $(\alpha_{i}^F,\tilde{\mu}_{h,i}^F)\in(\RR,\tilde\Lambda_h^F)$, where
$\alpha_{1}^F \le \alpha_{2}^F \le \alpha_{3}^F,\dots$, such that 
\begin{enumerate}[I)]
\item\label{e:eig1}  If the face $F$ is shared by elements $\tau$ and $\tau^\prime$ we solve
\begin{equation}\label{e:eqeig1}
\bigl(
\tilde\nu^F_h,(T_{FF}^\tau+T_{FF}^{\tau^\prime})\tilde\mu_{h,i}^F
\bigr)_F
=\alpha_i^F
\bigl(
\tilde\nu_h^F,(\hat T_{FF}^\tau+\hat T_{FF}^{\tau^\prime})\tilde\mu_{h,i}^F
\bigr)_F
\quad\text{for all }\tilde\nu^F_h\in\tilde\Lambda_h^F. 
\end{equation}
\item\label{e:eig2} If the face $F$ is on the boundary $\dO$ we solve 
\begin{equation}\label{e:eqeig2}
\bigl(\tilde\nu^F_h,T_{FF}^\tau\tilde{\mu}_{h,i}^F\bigr)_F
=\alpha_i^F\bigl(\tilde\nu_h^F,\hat T_{FF}^\tau\tilde\mu_{h,i}^F\bigr)_F,
\quad\text{for all }\tilde\nu_h^F\in\tilde\Lambda_h^F. 
\end{equation}
\end{enumerate}

Let $\astab\ge 1$ be a user-defined constant, aiming to ``control'' the localization of basis functions. Roughly, the basis functions become more localized with smaller $\astab$, in a sense that we make clear further on. 

We  decompose $\tilde\Lambda_h^F:=\tilde\Lambda_h^{F,\triangle}\oplus\tilde\Lambda_h^{F,\Pi}$ as
\begin{equation} \label{e:Lpmdef}
\tilde\Lambda_h^{F,\triangle}:=\spann\{\tilde\mu_{h,i}^F:\,\alpha_{i}^F < \alpha_{\rm{stab}}\},
\qquad
\tilde\Lambda_h^{F,\Pi}:=\spann\{\tilde\mu_{h,i}^F:\,\alpha_{i}^F \ge \alpha_{\rm{stab}}\}.
\end{equation}

The eigenfunctions $\tilde{\mu}_{h,i}^F$ are chosen to be orthonormal with respect to $(\cdot, (\hat{T}_{FF}^\tau + \hat{T}_{FF}^{\tau^\prime})\cdot)_F$ if $F$ is an interior face, and with $(\cdot, \hat{T}_{FF}^\tau \cdot)_F$ if $F\subset\partial\Omega$. 

Generalized eigenvalue problems of this type have appeared in the literature to 
make preconditioners robust with respect to coefficients~\cites{MR2277024,MR2334131,MR3089678,DP2013,MR3350292,MR3303686,MR3546980,MR3612901,MR3678572,MR3582898}. In particular~\cite{ZAMPINI} shows, for a similar problem, that $\alpha_i^F-1$ decays exponentially to zero since, when $h$ goes to zero, the operators $\hat T_{FF}$
(related to $(H^{1/2}(F))^\prime$)  and $T_{FF}$
(related to $(H^{1/2}_{00}(F))^\prime$) differ only by a compact operator. We
note that in~\cite{ZAMPINI} and also here, the generalized eigenvalue problems
have a dual algebraic relation with the parallel sums first introduced
in the classical article by~\cite{MR242573}. Additionally, since there are
no degrees of freedom at the vertices (in 2D) or edges (in 3D), we do not have
generalized eigenvalue problems associated to  vertices (in 2D) or
edges (in 3D), respectively.

In~\cite{MR2718268} is shown that the number of eigenvalues that are very large is related to the number of connected sub-regions on $\bar\tau\cup{\bar\tau}^\prime$ with large coefficients surrounded by regions with small coefficients. Generalized eigenvalue problems also have been used on overlapping domain decomposition solvers~\cites{MR2718268,MR2916377,MR3175183,MR3033238}. The design of robust discretizations with respect to coefficients using domain decomposition ideas have been studied in~\cites{MR2666649,MR1642758,MR3350765} assuming some regularity on the solution, and in~\cite{MR2718268} for a class of problems when the weighted Poincar\'e constant~\cites{MR3047947,MR3013465,MR2867661} is not large, otherwise the exponential decay of the multiscale functions deteriorates. See also~\cites{MR2753343,MR3109775} where a priori error estimates are obtained in terms of spectral norms. 

In the following result, we provide stability estimates for $T$ acting on certain subspaces of $\tilde\Lambda_h^f$, and some upper bounds for the generalized eigenvalues. 

\begin{lemma}\label{l:eigenbound}
Let $F$ be a face of $\dtau$ shared by the elements $\tau$, $\tau^\prime\in\triH$. If $F$ is on the boundary $\dO$, assume $\tau^\prime=\emptyset$. Then
\begin{equation}\label{e:traceestdelta}
|T\{\tilde\mu_h^{F,\triangle},0\}|_{H_\A^1(\tau)}^2
+|T\{\tilde\mu_h^{F,\triangle},0\}|_{H_\A^1(\tau^\prime)}^2
\le\astab\bigl(
|T\tilde\mu_h^\tau|_{H_\A^1(\tau)}^2+|T\tilde\mu_h^{\tau^\prime}|_{H_\A^1(\tau^\prime)}^2
\bigr)
\end{equation}
for all $\tilde\mu_h^\tau=\{\tilde\mu_h^{F,\triangle},\tilde\mu_h^{F_\tau^c}\}\in\tilde\Lambda_h^{F,\triangle}\times\tilde\Lambda_h^{F_\tau^c}$ and all $\tilde\mu_h^{\tau^\prime}=\{\tilde\mu_h^{F,\triangle},\tilde\mu_h^{F_{\tau^\prime}^c}\}\in\tilde\Lambda_h^{F,\triangle}\times\tilde\Lambda_h^{F_{\tau^\prime}^c}$. 
Also, 
\begin{equation}\label{e:traceest}
|T\{\tilde\mu_h^F,0\}|_{H_\A^1(\tau)}^2\le\alpha|T\tilde\mu_h^\tau|_{H_\A^1(\tau)}^2
\quad\text{for all }\tilde\mu_h^\tau
=\{\tilde\mu_h^F,\tilde\mu_h^{F_\tau^c}\}\in\tilde\Lambda_h^F\times\tilde\Lambda_h^{F_\tau^c}, 
\end{equation}
where $\alpha=\gamma\kappa\beta_{H/h}^2$,  the positive constant $\gamma$ depends only on the shape regularity of $\triH$, and 
\[
\beta_{H/h}=1+\log(H/h),
\quad\kappa=\max_{\tau\in\triH}\kappa^\tau,
\quad \kappa^\tau=\frac{\amax^\tau}{\amin^\tau},
\quad \amax^\tau=\sup_{\xx\in\tau}a_+(\xx),
\quad\amin^\tau=\inf_{\xx\in\tau}a_-(\xx)\label{p:betadef}.
\]
Finally, if $\alpha_i^F$ is an eigenvalue of~{(\ref{e:eig1}--\ref{e:eig2})}, then $1\le\alpha_i^F\le\alpha$. 
\end{lemma}
\begin{proof} 
Assume that $F$ is a face shared by $\tau$, $\tau^\prime\in\triH$. The case when $F$ is on the boundary $\dO$ is similar. To show~\eqref{e:traceestdelta}, let $\tilde\mu_h^{F,\triangle}\in\tilde\Lambda_h^{F,\triangle}$. Then the eigen-expansion  $\tilde\mu_h^{F,\triangle}=\sum_ic_i\tilde\mu_{h,i}^F$ holds and $\alpha_j\le\astab$. Then 
\begin{multline*}
|T\{\tilde\mu_h^{F,\triangle},0\}|_{H_\A^1(\tau)}^2
+|T\{\tilde\mu_h^{F,\triangle},0\}|_{H_\A^1(\tau^\prime)}^2
=(\tilde\mu_h^{F,\triangle},T_{FF}^\tau\tilde\mu_h^{F,\triangle})_F
+(\tilde\mu_h^{F,\triangle},T_{FF}^{\tau^\prime}\tilde\mu_h^{F,\triangle})_F
\\
=\sum_{i,j}c_ic_j\bigl(
(\tilde\mu_{h,i}^{F,\triangle},T_{FF}^\tau\tilde\mu_{h,j}^{F,\triangle})_F
+(\tilde\mu_{h,i}^{F,\triangle},T_{FF}^{\tau^\prime}\tilde\mu_{h,j}^{F,\triangle})_F
\bigr)
\\
=\sum_{i,j}c_ic_j\alpha_j\bigl(
(\tilde\mu_{h,i}^{F,\triangle},\hat T_{FF}^\tau\tilde\mu_{h,j}^{F,\triangle})_F
+(\tilde\mu_{h,i}^{F,\triangle},\hat T_{FF}^{\tau^\prime}\tilde\mu_{h,j}^{F,\triangle})_F
\bigr)
\\
=\sum_{j}c_j^2\alpha_j\bigl(
(\tilde\mu_{h,j}^{F,\triangle},\hat T_{FF}^\tau\tilde\mu_{h,j}^{F,\triangle})_F
+(\tilde\mu_{h,j}^{F,\triangle},\hat T_{FF}^{\tau^\prime}\tilde\mu_{h,j}^{F,\triangle})_F
\bigr)
\\
\le\astab\bigl(
(\tilde\mu_h^{F,\triangle},\hat T_{FF}^\tau\tilde\mu_h^{F,\triangle})_F
+(\tilde\mu_h^{F,\triangle},\hat T_{FF}^{\tau^\prime}\tilde\mu_h^{F,\triangle})_F
\bigr)
\le\astab\bigl(
(\tilde\mu_h^\tau,T\tilde\mu_h^\tau)_\dtau+(\tilde\mu_h^{\tau^\prime},T\tilde\mu_h^{\tau^\prime})_{\dtau^\prime} 
\bigr).
\end{multline*}
To show~\eqref{e:traceest}, let us first define $T_\I$ by~\eqref{e:definitionT} with $\A=\I$, the identity tensor, that is, $T_\I$ is the classical harmonic extension with Neumann boundary condition. From Lemma~\ref{l:extensions} in the Appendix and a direct application of~\cite{MR1759911}*{Lemma 4.4} since both $\tilde\mu_F$ and $\tilde\mu_h^\tau$ have zero average on $\partial\tau$, we have  
\begin{multline*}
|T\{\tilde\mu_h^F,0\}|^2_{H_\A^1(\tau)}
\le\frac1{\amin^\tau}|T_\I\{\tilde\mu_h^F,0\}|_{H^1(\tau)}^2
\le\frac{c_1}{\amin^\tau}|\{\tilde\mu_h^F,0\}|^2_{H^{-1/2}(\dtau)}
\le\frac{c_2}{\amin^\tau}\beta_{H/h}^2 |\tilde\mu_h^\tau|_{H^{-1/2}(\dtau)}^2
\\
\le\frac\gamma{\amin^\tau}\beta_{H/h}^2|T_\I\tilde\mu_h^\tau|^2_{H^1(\tau)}
\le\gamma\frac{\amax^\tau}{\amin^\tau} \beta_{H/h}^2 |T\tilde\mu_h^\tau|^2_{H_\A^1(\tau)},
\end{multline*}
and then~\eqref{e:traceest} follows.

Next, taking $\tilde\nu_h^{F_\tau^c}=\tilde\mu_h^{F_\tau^c}$ in~{(\ref{e:eig1}--\ref{e:eig2})}, we gather from~\eqref{e:inequalityenergy2} that $\alpha_{i}^F\ge1$. Finally, without loss of generality assume that~\eqref{e:eig1} holds and $\tau$ and $\tau^\prime$ share the face $F$. Then
\begin{multline*}
\alpha_i^F\bigl(\tilde\mu_{h,i}^F,(\hat T_{FF}^\tau+\hat T_{FF}^{\tau^\prime})\tilde\mu_{h,i}^F\bigr)_F
=\bigl(\tilde\mu_{h,i}^F,(T_{FF}^\tau+T_{FF}^{\tau^\prime})\tilde\mu_{h,i}^F\bigr)_F
\\
=(\{\tilde\mu_{h,i}^F,0\},T\{\tilde\mu_{h,i}^F,0\})_\dtau
+(\{\tilde\mu_{h,i}^F,0\},T\{\tilde\mu_{h,i}^F,0\})_{\dtau^\prime}
\\
\le\alpha\bigl(
\min_{\tilde\nu^{F_\tau^c}_h\in\tilde\Lambda_h^{F_\tau^c}}
(\{\tilde\mu_{h,i}^F,\tilde\nu_h^{F_\tau^c}\},T\{\tilde\mu_{h,i}^F,\tilde\nu_h^{F_\tau^c}\})_\dtau
+\min_{\tilde\nu^{F_{\tau^\prime}^c}_h\in\tilde\Lambda_h^{F_{\tau^\prime}^c}}
(\{\tilde\mu_{h,i}^F,\tilde\nu_h^{F_{\tau^\prime}^c}\},T\{\tilde\mu_{h,i}^F,\nu_h^{F_{\tau^\prime}^c}\})_{\dtau^\prime}
\bigr)
\\
=\alpha\bigl(
\tilde\mu_{h,i}^F,(\hat T_{FF}^\tau+\hat T_{FF}^{\tau^\prime})\tilde\mu_{h,i}^F
\bigr)_F, 
\end{multline*}
where we used~\eqref{e:inequalityenergy2} and~\eqref{e:traceest}. Thus, $\alpha_i^F\le\alpha$. 
\end{proof}

To define our LSD--Localized Spectral Decomposition Method for high-contrast coefficients, let us introduce the non-localized version. Let us first define
\begin{equation}\label{e:ltpmdef}
\begin{gathered}
\tilde\Lambda_h^\Pi
=\{\tilde\mu_h\in\tildeLambda_h^f:\,\tilde\mu_h|_F\in\tilde\Lambda_h^{F,\Pi}\text{ for all } F\in \btriH\},
\\ 
\tilde\Lambda_h^\triangle
=\{\tilde\mu_h\in\tildeLambda_h^f:\,\tilde\mu_h|_F\in\tilde\Lambda_h^{F,\triangle}\text{ for all } F\in \btriH\}. 
\end{gathered}
\end{equation}
From Lemma~\ref{l:eigenbound}, if $\astab>\alpha$ then $\tilde\Lambda_h^\Pi$ is empty. 

We now follow the same procedure as in~\eqref{e:mc3} except that now we replace $\LTRM$ by
\begin{equation}\label{e:Ltopidef}
\widetilde\Lambda_h^{0,\Pi}
:=\LTRM\oplus\widetilde\Lambda_h^\Pi,
\end{equation}
replace $\widetilde\Lambda_h^f$ by $\widetilde\Lambda_h^\triangle$ and replace $P$ by $P^\triangle:\VH\to\widetilde\Lambda^\triangle_h$ such that, for $w\in\VH$, 
\begin{equation}\label{e:Pdefhigh}
  (\tilde\mu_h^\triangle,TP^\triangle w)_\btriH=(\tilde\mu_h^\triangle,w)_\btriH\quad\text{for all }\tilde\mu_h^\triangle\in\tilde\Lambda_h^\triangle.
\end{equation}
The outcome is that ${\tilde\lambda^{0,\Pi}}_h\in\widetilde\Lambda_h^{0,\Pi}$ solves 
(cf. with~\eqref{e:tlrmeqtn})
\begin{multline*}
\bigl((I-P^\triangle T){\tilde\mu^{0,\Pi}}_h,T(I-P^\triangle T){\tilde\lambda^{0,\Pi}}_h\bigr)_{\btriH}
=-\bigl((I-P^\triangle T){\tilde\mu^{0,\Pi}}_h,(I-TP^\triangle)\tilde T\bg\bigr)_{\btriH}
\\
-\bigl((I-P^\triangle T){\tilde\mu^{0,\Pi}}_h,T(I-P^\triangle T)\lambdarm\bigr)_{\btriH}
\quad\text{for all }{\tilde\mu^{0,\Pi}}_h\in\widetilde\Lambda_h^{0,\Pi}, 
\end{multline*}
and 
\begin{equation}\label{e:alternative}
\begin{gathered}
u_h=u_h^0+T\lambda_h+\tilde T\bg, 
\qquad
\tilde\lambda_h^\triangle=-P^\triangle(T\lambdarm+T{\tilde\lambda^{0,\Pi}}_h+\tilde T\bg),
\qquad
\\
\lambda_h
=\lambdarm+\tilde\lambda_h^{0,\Pi}+\tilde\lambda_h^\triangle
=(I-P^\triangle T)(\lambdarm+\tilde\lambda_h^{0,\Pi})-P^\triangle\tilde T\bg. 
\end{gathered}
\end{equation}
Note that $u_h$, $u_h^0$ and $\lambdarm  $ in~\eqref{e:alternative} are the same as in~\eqref{disp-stress2}. Also, since the space $\tildeLambda_h=\LTRM\oplus\tilde\Lambda_h^\Pi \oplus \tilde\Lambda_h^\triangle$ is a direct sum,
\begin{equation}\label{e:decompPitriang}
\tilde\lambda_h^{0,\Pi}=\tilde\lambda_h^{0}+\tilde\lambda_h^\Pi,
\qquad
    \tilde\lambda_h^\Pi+\tilde\lambda_h^\triangle=\tilde\lambda_h^f,
\end{equation}
where $\tilde{\lambda}_h^{0}$ and $\tilde\lambda_h^f$ are the same as in~\eqref{disp-stress2} and $\tilde{\lambda}_h^{\Pi}\in\tilde\Lambda_h^\Pi$.  

The lemma that follows provides a crucial estimate for solutions of~\eqref{e:Pdefhigh}, allowing the establishment of the exponential decay of solutions in Theorem~\ref{t:decaythmhc}. 

\begin{lemma}\label{l:decayhc}
Let $v\in\VH$ such that $\supp v\subset K$, and $\tilde\mu_h^{\triangle}=P^{\triangle}v$. Then, for any integer $j\ge1$, 
\begin{equation*}
  |T\tilde\mu_h^{\triangle}|_{H_\A^1(\triH\backslash {\mathcal{T}}_{j+1}(K))}^2
  \le \Nf\alpha_{\rm{stab}} |T\tilde\mu_h^{\triangle}|_{H_\A^1(\T_{j+1}(K)\backslash\T_{j-1}(K))}^2, 
\end{equation*}
where $\Nf$ is the maximum number of faces that an element might have. 
\end{lemma}
\begin{proof} 
Choose $\tilde\nu_h^\triangle\in\tilde\Lambda_h^\triangle$ defined by $\tilde\nu_h^\triangle|_F=0$ if $F$ is a face of an element of $\T_j(K)$ and $\tilde\nu_h^\triangle|_F=\tilde\mu_h^\triangle|_F$ otherwise. We obtain 
\begin{multline*}
|T\tilde\mu_h^\triangle|_{H_\A^1(\triH\backslash{\T}_{j+1}(K))}^2
=\sum_{\tau\in\triH\backslash\T_{j+1}(K)}(\tilde\mu_h^\triangle,T\tilde\mu_h^\triangle)_{\partial\tau}
\\
=\sum_{\tau\in\triH}(\tilde\nu_h^\triangle,T\tilde\mu_h^\triangle)_{\partial\tau}
-\sum_{\tau\in\T_{j+1}(K)\backslash\T_j(K)}(\tilde\mu_h^\triangle,T\tilde\mu_h^\triangle)_{\partial\tau}
+\sum_{\tau\in\T_{j+1}(K)\backslash\T_{j}(K)}(\tilde\mu_h^\triangle-\tilde\nu_h^\triangle,T\tilde\mu_h^\triangle)_{\partial\tau}
\\
=-|T\tilde\mu_h^\triangle|_{H_\A^1({\T}_{j+1}(K)\backslash{\T}_{j}(K))}^2
+\sum_{\tau\in\T_{j+1}(K)\backslash\T_{j}(K)}
(\tilde\mu_h^\triangle-\tilde\nu_h^\triangle,T\tilde\mu_h^\triangle)_{\partial\tau}, 
\end{multline*}
where we used that $\sum_{\tau\in\triH}(\tilde\nu_h^\triangle,T\tilde\mu_h^\triangle)_{\partial\tau}=0$ due to the definition of $T$ and the local support of $v$.

For each $\tau\in\T_{j+1}(K)\backslash\T_j(K)$ and let $\hat\F(\tau)$ be the set of faces of $\tau$ that are also faces of an element in $\T_{j}(K)$. Let $\tau^\prime(F)\in\T_j(K)$ be the element sharing $F$. 
\begin{multline*}
\sum_{\tau\in\T_{j+1}(K)\backslash\T_{j}(K)}
(\tilde\mu_h^\triangle-\tilde\nu_h^\triangle,T\tilde\mu_h^\triangle)_{\partial\tau}
=\sum_{\tau\in\T_{j+1}(K)\backslash\T_{j}(K)}
\sum_{\hat F\in\hat\F(\tau)}(\tilde\mu_h^\triangle,T\tilde\mu_h^\triangle)_{\hat F}
\\
\le\astab\sum_{\tau\in\T_{j+1}(K)\backslash\T_{j}(K)}
\sum_{\hat F\in\hat\F(\tau)}\bigl(
|T\tilde\mu_h^\triangle|_{H_\A^1(\tau)}^2+|T\tilde\mu_h^\triangle|_{H_\A^1(\tau^\prime(F))}^2
\bigr)
\\
\le \Nf\astab\bigl(
|T\tilde\mu_h^\triangle|_{H_\A^1(\T_{j+1}(K))}^2+|T\tilde\mu_h^\triangle|_{H_\A^1(\T_j(K))}^2
\bigr)
\end{multline*}
from~\eqref{e:traceestdelta}, and since $\hat\F(\tau)$ contains at most $\Nf$ faces. 
\end{proof} 
\begin{rem}
The conclusion of Lemma~\ref{l:decayhc} also holds if $\supp v\subset\T_i(K)$ for some positive integer $i<j$.
\end{rem}

\begin{theorem}\label{t:decaythmhc}
Let $v\in\VH$ such that $\supp v\subset K$, and $\tilde\mu_h^\triangle=P^\triangle v$. Then, for any integer $j\ge1$, 
\begin{equation*}
|T\tilde\mu_h^\triangle|_{H_\A^1(\triH\backslash\T_{j+1}(K))}^2
\le e^{-\frac{[(j+1)/2]}{1+\Nf\astab}}|T\tilde\mu_h^\triangle|_{H_\A^1(\triH)}^2,
\end{equation*}
where $[s]$ is the integer part of $s$, and $\Nf$ is the maximum number of faces that an element might have.
\end{theorem}
\begin{proof} 
Using Lemma~\ref{l:decayhc} we have 
\begin{equation*}
|T\tilde\mu_h^\triangle|_{H_\A^1(\triH\backslash\T_{j+1}(K))}^2
\le \Nf\astab|T\tilde\mu_h^\triangle|_{H_\A^1(\triH\backslash\T_{j-1}(K))}^2
-\Nf\astab|T\tilde\mu_h^\triangle|_{H_\A^1(\triH\backslash{\T}_{j+1}(K))}^2, 
\end{equation*}
and then
\[
|T\tilde\mu_h^\triangle|_{H_\A^1(\triH\backslash{\T}_{j+1}(K))}^2
\le\frac{\Nf\astab}{1+\Nf\astab}|T\tilde\mu_h^\triangle|_{H_\A^1(\triH\backslash\T_{j-1}(K))}^2
\le e^{-\frac{1}{1+\Nf\astab}}|T\tilde\mu_h^\triangle|_{H_\A^1(\triH\backslash\T_{j-1}(K))}^2. 
\]
The theorem then follows from recursive applications of Lemma~\ref{l:decayhc}.
\end{proof}

We now localize $Pv$ since it decays exponentially when $v$ has local support. We consider two families of localizations. The first family $\tilde P^{\triangle,j}$ is based on elements in $\triH$, and utilized to localize $\tilde Tg$, while the second family $P^{\triangle,j}$ is based on faces of $\F_H$ with the purpose to localize $T\lambda_h$.

Fixing $K\in\triH$ and a positive integer $j$, let $\tilde\Lambda_h^{\triangle,K,j}\subset\tilde\Lambda_h^\triangle$ be the set of functions of $\tilde\Lambda_h^\triangle$ vanishing on faces of elements in $\triH\backslash\T_j(K)$, and
$P^{\triangle,K,j}:\VH\to\tilde\Lambda_h^{\triangle,K,j}$ such that, for $v\in\VH$,
\begin{equation}\label{e:Ptriagkj}
  (\tilde\mu_h^{\triangle,K,j},TP^{\triangle,K,j}v)_{\btriH}
  =(\tilde\mu_h^{\triangle,K,j},v_K)_{\btriH}
  \quad\text{for all }\tilde\mu_h^{\triangle,K,j}\in\tilde\Lambda^{\triangle,K,j}_h,
\end{equation}
where we let $v_K$ to be equal to $v$ on $K$ and zero otherwise. We define then $\tilde{P}^{\triangle,j}$ by 
\begin{equation}\label{e:Pjdefminus}
\tilde P^{\triangle,j}v=\sum_{K\in \triH}P^{\triangle,K,j}v_K. 
\end{equation}

We next introduce a new localization, this time based on faces. For a fixed $F\in\F_H$ shared by elements $\tau_1^F$ and  $\tau_2^F$  or shared by only one element $\tau^F$, define $\T_0(F)=\emptyset$, $\T_1(F)=\{\tau^F_1,\tau^F_2\}$ or $\T_1(F)=\{\tau^F\}$, and for  $j=1,2,\dots$ let
\[
\T_{j+1}(F)
=\{\tau\in\triH:\,\bar\tau\cap\bar\tau_j\ne\emptyset\text{ for some }\tau_j\in\T_j(F)\}. 
\]
Let $\tilde\Lambda_h^{\triangle,F,j}\subset\tilde\Lambda_h^\triangle$ be the set of functions of $\tilde\Lambda_h^\triangle$ vanishing on faces of elements in $\triH\backslash\F_j(F)$. Let us decompose $\lambda_h \in \Lambda_h$ into $\lambda_h=\sum_{F\in\F_H}\lambda_h^F$ where $\lambda_h^F=\lambda_h$ on the face $F$ and zero everywhere else on $\F_H$.  The operator $P^{\triangle,F,j}T:\Lambda_h\to\tilde\Lambda_h^{\triangle,F,j}$ is defined by 
\begin{equation}\label{e:PFjdefhc}
(\tilde\mu_h^\triangle,TP^{\triangle,F,j}T\lambda_h)_{\btriH}
=(\tilde\mu_h^\triangle,T\lambda_h^F)_{\partial\tau_1^F}+(\tilde\mu_h^\triangle,T\lambda_h^F)_{\partial\tau_2^F}
\quad\text{for all }\tilde\mu_h^\triangle\in\tilde\Lambda^{\triangle,F,j}_h. 
\end{equation}
We define then $P^{\triangle,j}T\lambda_h\in\tilde\Lambda_h^\triangle$ by 
\begin{equation}\label{e:Pjdefhc}
P^{\triangle,j}T\lambda_h=\sum_{F\in\F_H}P^{\triangle,F,j}T\lambda_h. 
\end{equation}
The reason to introduce $P^{\triangle,j}T\lambda_h$ is because we could not prove that $\tilde P^{\triangle,j}T \tilde\lambda_h^\triangle=\tilde\lambda_h^\triangle$ and this property is fundamental on what follows.
\begin{lemma} \label{e:invariance}
Let $j\ge1$ and $\tilde\lambda_h^\triangle\in\tilde\Lambda_h^\triangle$ and $P^{\triangle,j}$ be defined as above. Then $P^{\triangle,j}T\tilde\lambda_h^\triangle=\tilde\lambda_h^\triangle$.
\end{lemma}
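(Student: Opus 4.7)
The plan is to show, for each $F\in\F_H$, that $P^{F,j}T\tilde\lambda_h^f$ equals the restriction of $\tilde\lambda_h^f$ to the face $F$ (and zero elsewhere on $\F_H$), which I denote $\tilde\lambda_h^{f,F}$, in analogy with the decomposition $\lambda_h=\sum_F\lambda_h^F$ introduced just before~(\ref{e:Pjdef}). Summing over $F$ then yields $P^jT\tilde\lambda_h^f=\sum_F\tilde\lambda_h^{f,F}=\tilde\lambda_h^f$.

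The essential observation is that $T$ is already ``local on faces'': because~(\ref{e:definitionT}) is an element-wise homogeneous-Neumann problem, $T\mu|_\tau=0$ whenever $\mu|_{\partial\tau}=0$. Applied to $\tilde\lambda_h^{f,F}$, which by construction is supported on the single face $F$, this forces $T\tilde\lambda_h^{f,F}$ to be supported on $\tau_1^F\cup\tau_2^F$ only. One also checks that $\tilde\lambda_h^{f,F}\in\tilde\Lambda_h^{f,F,j}$: the zero-mean-per-face property is inherited from $\tilde\lambda_h^f$, and $\tilde\lambda_h^{f,F}$ vanishes on every face of every element of $\triH\setminus\T_j(F)$, since $F$ only borders elements in $\T_1(F)\subset\T_j(F)$.

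With these facts in hand, I would substitute $\lambda_h=\tilde\lambda_h^f$ into the Galerkin equation just above~(\ref{e:Pjdef}) and verify that $\tilde\lambda_h^{f,F}$ itself solves it. For any $\tilde\mu_h^f\in\tilde\Lambda_h^{f,F,j}$, the sum
\[
(\tilde\mu_h^f,T\tilde\lambda_h^{f,F})_{\btriH}=\sum_{\tau\in\triH}(\tilde\mu_h^f,T\tilde\lambda_h^{f,F})_{\partial\tau}
\]
collapses, via the support property of $T\tilde\lambda_h^{f,F}$, to exactly the two (or one) terms appearing on the right-hand side of the defining equation. Uniqueness of the Galerkin solution then follows from the coercivity of $(\cdot,T\cdot)_{\btriH}$ on $\tilde\Lambda_h^f\supset\tilde\Lambda_h^{f,F,j}$ invoked in the paragraph following~(\ref{e:lambdadef}), giving $P^{F,j}T\tilde\lambda_h^f=\tilde\lambda_h^{f,F}$.

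There is no genuinely hard step; the point worth flagging is that the localization index $j$ never actively enters the argument, even though it appears in every symbol of the statement. The invariance is a purely structural fact, reflecting that the Neumann extension $T$ is already perfectly local on face-supported data, so the Galerkin projection in the definition of $P^{F,j}$ reproduces its input exactly. The localization does nontrivial work only on inputs like $T\lambdarm$ or $T\tlambdarm_h$ whose $T$-images are globally supported, which is precisely the setting handled by the exponential-decay estimates of Section~\ref{ss:lowcon}.
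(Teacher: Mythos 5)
Your proposal is correct and follows essentially the same route as the paper: you verify that the face-restriction $\tilde\lambda_h^{f,F}$ lies in $\tilde\Lambda_h^{f,F,j}$, use the element-wise locality of $T$ on face-supported data to identify the right-hand side of the defining equation of $P^{F,j}$ with $(\tilde\mu_h^f,T\tilde\lambda_h^{f,F})_{\btriH}$, and conclude by uniqueness of the Galerkin solution before summing over faces. Your remark that $j$ plays no active role is a fair (and accurate) elaboration of the paper's terser argument.
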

\begin{proof}
Let $F\in\F_H$ and $\lambda_h^F=\lambda_h$ on $F$ and zero everywhere else. Since
\[
(\tilde\mu_h^\triangle,T\tilde\lambda_h^{\triangle,F})_{\partial\tau_1^F}
+(\tilde\mu_h^\triangle,T\tilde\lambda_h^{\triangle,F})_{\partial\tau_2^F}
=(\tilde\mu_h^\triangle,T \tilde\lambda_h^{\triangle,F})_\btriH
\quad\text{for all }\tilde\mu_h^\triangle\in\tilde\Lambda^{\triangle}_h, 
\]
and $\tilde{\lambda}_h^{\triangle,F}\in\tilde\Lambda_h^{\triangle,F,j}$, then it follows from the existence and uniqueness of $P^{\triangle,F,j}T\tilde\lambda_h^\triangle$ that  $P^{\triangle,F,j}T\tilde\lambda_h^{\triangle,F}=\tilde\lambda_h^{\triangle,F}$. Also, $P^{\triangle,F,j}T\tilde\lambda_h^\triangle=P^{\triangle,F,j}T\tilde\lambda_h^{\triangle,F}$, and it then follows from~\eqref{e:Pjdefhc} that
\[
P^{\triangle,j}T\lambda_h^\triangle
=\sum_{F\in\F_H}P^{\triangle,F,j}T\lambda_h^{\triangle,F}
=\sum_{F\in\F_H}\lambda_h^{\triangle,F}
=\lambda_h^\triangle. 
\]
\end{proof}

In Lemmas~\ref{l:errorPtjhc} and~\ref{l:errorPjhc} we estimate the accuracy of $\tilde P^{\triangle,j}$ and $P^{\triangle,j}$ with respect to $P^\triangle$. 

\begin{lemma}\label{l:errorPtjhc}
Consider $v\in H^1(\triH)$, and the operators $P^\triangle$ defined by~\eqref{e:Pdefhigh} and $\tilde P^{\triangle,j}$ by~\eqref{e:Pjdefminus}.
Then
\begin{equation*}
|T(P^\triangle-\tilde P^{\triangle,j})v|_{H_\A^1(\triH)}^2
\le cj^{2d}\Nf^4\astab^2e^{-\frac{[(j-1)/2]}{1+\Nf\astab}}|v|_{H_\A^1(\triH)}^2, 
\end{equation*}
where $\Nf$ is the maximum number of faces of an arbitrary element. 
\end{lemma}
\begin{proof} 
For $K\in\triH$, let $\tilde\mu_h^{\triangle,K}=P^\triangle v_K$ and $\tilde\mu_h^{\triangle,K,j}=P^{\triangle,j,K}v_K$, and
\[
\tilde\psi_h^\triangle=\sum_{K\in\triH}(\tilde\mu_h^{\triangle,K}-\tilde\mu^{\triangle,K,j}_h).
\]
For each $K\in\triH$, let $\tilde\psi^{\triangle,K}_h\in\tilde\Lambda_h^\triangle$ be defined by $\tilde\psi^{\triangle,K}_h|_F=0$ if $F$ is a face of an element of $\T_j(K)$ and $\tilde\psi^{\triangle,K}_h  |_F=\tilde\psi_h^\triangle|_F$, otherwise. We obtain 
\begin{equation} \label{splittinghc}
|T\tilde\psi_h^\triangle|_{H_\A^1(\triH)}^2 
=\sum_{K\in\triH}\sum_{\tau\in\triH}
(\tilde\psi_h^\triangle-\tilde\psi^{\triangle,K}_h,
T(\tilde\mu_h^{\triangle,K}-\tilde\mu_h^{\triangle,K,j}))_{\partial\tau}
+(\tilde\psi^{\triangle,K}_h,T(\tilde\mu_h^{\triangle,K} - \tilde\mu_h^{\triangle,K,j}))_{\partial \tau}. 
\end{equation}
See that the second term of~\eqref{splittinghc} vanishes since 
\[
\sum_{\tau\in\triH}(\tilde\psi^{\triangle,K},T(\tilde\mu_h^{\triangle,K}-\tilde\mu_h^{\triangle,K,j}))_{\partial\tau}
=\sum_{\tau\in\triH}(\tilde\psi^{\triangle,K},T\tilde\mu_h^{\triangle,K})_{\partial \tau} = 0.
\]
For the first term of~\eqref{splittinghc} we use a direct application of~\eqref{e:traceestdelta}, yielding
\begin{multline*}
\sum_{\tau\in\triH}
\bigl(\tilde\psi_h^\triangle-\tilde\psi^{\triangle,K}_h,
T(\tilde\mu_h^{\triangle,K}-\tilde\mu_h^{\triangle,K,j})\bigr)_{\partial\tau}
\\
\le\sum_{\tau\in\T_{j+1}(K)}
|T(\tilde\psi_h^\triangle-\tilde\psi_h^{\triangle,K})|_{H_\A^1(\tau)}
|T(\tilde\mu_h^{\triangle,K}-\tilde\mu_h^{\triangle,K,j})|_{H_\A^1(\tau)}
\\
\le \Nf\astab^{1/2}|T\tilde\psi_h^\triangle|_{H_\A^1(\T_{j+1}(K))}
|T(\tilde\mu_h^{\triangle,K}
-\tilde\mu_h^{\triangle,K,j})|_{H_\A^1(\T_{j+1}(K))}.
\end{multline*}
Let $\nu_h^{\triangle,K,j} \in \tilde\Lambda_h^{\triangle,K,j}$ be equal to zero on all faces of elements of $\T_H\backslash\T_j(K)$
and equal to $\tilde\mu_h^{\triangle,K}$ otherwise. Using Galerkin best approximation property,~\eqref{e:traceestdelta}, and Theorem~\ref{t:decaythmhc} we obtain
\begin{multline*}
|T(\tilde\mu_h^{\triangle,K}-\tilde\mu_h^{\triangle,K,j})|_{H_\A^1({\T}_{j+1}(K))}^2
\le|T(\tilde\mu_h^{\triangle,K}-\tilde\mu_h^{\triangle,K,j})|_{H_\A^1(\triH)}^2
\le|T(\tilde\mu_h^{\triangle,K}-\nu_h^{\triangle,K,j})|_{H_\A^1(\triH)}^2
\\ 
\le \Nf^2\astab|T\tilde\mu_h^{\triangle,K}|_{H_\A^1(\triH\backslash\T_{j-1}(K))}^2 
\le \Nf^2\astab e^{-\frac{[(j-1)/2]}{1+\Nf\astab}}|T\tilde\mu_h^{\triangle,K}|_{H_\A^1(\triH)}^2.
\end{multline*}
We gather the above results to obtain  
\begin{multline*}
|T\tilde\psi_h^\triangle|_{H_\A^1(\triH)}^2
\le \Nf^2\astab e^{-\frac{[(j-1)/2]}{2(1+\Nf\astab)}}
\sum_{K\in\triH}
|T\tilde\psi_h^\triangle|_{H_\A^1({\T}_{j+1}(K))}
|T\tilde\mu_h^{\triangle,K}|_{H_\A^1(\triH)}
\\
\le c\Nf^2\astab e^{-\frac{[(j-1)/2]}{2(1+\Nf\astab)}}j^d
|T\tilde\psi_h^{\triangle}|_{H_\A^1(\triH)}
\biggl(\sum_{K\in\triH}|T\tilde\mu_h^{\triangle,K}|_{H_\A^1(\triH)}^2\biggr)^{1/2}. 
\end{multline*}
We finally gather that 
\[
|T\tilde\mu_h^{\triangle,K}|_{H_\A^1(\triH)}^2
=(\tilde\mu_h^{\triangle,K},TPv_K)_{\partial \triH}
=(\tilde\mu_h^{\triangle,K}, v_K)_{\partial \triH}
=\int_K\A\bgrad(T\tilde\mu_h^{\triangle,K})\cdot\bgrad(v_K)\,d\xx
\]
and from Cauchy--Schwarz, $|T\tilde\mu_h^{\triangle,K}|_{H_\A^1(\triH)}\le|v_K|_{H_\A^1(K)}$,
we have
\[
\sum_{K\in\triH}|T\tilde\mu_h^{\triangle,K}|_{H_\A^1(\triH)}^2
\le|v|_{H^1_\A(\triH)}^2.
\]
\end{proof}

\begin{lemma}\label{l:errorPjhc}
Consider $\lambda_h\in\Lambda_h$, and the operators $P^\triangle$ defined by~\eqref{e:Pdefhigh} and $P^{\triangle,j}$ by~\eqref{e:Pjdefhc}.
Then
\begin{equation*}
|T(P^\triangle-P^{\triangle,j})T\lambda_h|_{H_\A^1(\triH)}^2
\le cj^{2d}\Nf^4\astab^2e^{-\frac{[(j-1)/2]}{1+\Nf\astab}}|T\lambda_h|_{H_\A^1(\triH)}^2
\end{equation*}
\end{lemma}
\begin{proof} 
For $F\in\btriH$, let $\tilde\mu_h^{\triangle,F}=P^\triangle T\lambda_h^F$ and $\tilde\mu_h^{\triangle,F,j}=P^{\triangle,F,j}T\lambda_h^F$, and
\[
\tilde\psi_h^\triangle
=\sum_{F\in\btriH}(\tilde\mu_h^{\triangle,F}-\tilde\mu^{\triangle,F,j}_h).
\]
For each $F\in\btriH$, let $\tilde\psi^{\triangle,F}_h\in\tilde\Lambda_h^\triangle$ be defined by $\tilde\psi^{\triangle,F}_h|_{\hat F}=0$ if $\hat F$ is a face of an element of $\T_j(F)$ and $\tilde\psi^{\triangle,F}_h|_{\hat F}=\tilde\psi_h^\triangle|_{\hat F}$, otherwise. We obtain 
\begin{equation} \label{splittinghcF}
|T\tilde\psi_h^\triangle|_{H_\A^1(\triH)}^2 
=\sum_{\tau\in\triH}\sum_{F\in\btriH}
(\tilde\psi_h^\triangle-\tilde\psi^{\triangle,F}_h,
T(\tilde\mu_h^{\triangle,F}-\tilde\mu_h^{\triangle,F,j}))_{\partial\tau}
+(\tilde\psi^{\triangle,F}_h,T(\tilde\mu_h^{\triangle,F}
-\tilde\mu_h^{\triangle,F,j}))_{\partial \tau}. 
\end{equation}
See that the second term of~\eqref{splittinghcF} vanishes since 
\[
\sum_{\tau\in\triH}(\tilde\psi^{\triangle,F},
T(\tilde\mu_h^{\triangle,F}-\tilde\mu_h^{\triangle,F,j}))_{\partial\tau}
=\sum_{\tau\in\triH}(\tilde\psi^{\triangle,F},T\tilde\mu_h^{\triangle,F})_{\partial \tau} = 0.
\]
For the first term of~\eqref{splittinghcF} we use a direct application of~\eqref{e:traceestdelta}, yielding
\begin{multline*}
\sum_{\tau\in\triH}
\bigl(\tilde\psi_h^\triangle-\tilde\psi^{\triangle,F}_h,
T(\tilde\mu_h^{\triangle,F}-\tilde\mu_h^{\triangle,F,j})\bigr)_{\partial\tau}
\le\sum_{\tau\in\T_{j+1}(F)}
|T(\tilde\psi_h^\triangle-\tilde\psi_h^{\triangle,F})|_{H_\A^1(\tau)}
|T(\tilde\mu_h^{\triangle,F}-\tilde\mu_h^{\triangle,F,j})|_{H_\A^1(\tau)}
\\
\le \Nf\astab^{1/2}|T\tilde\psi_h^\triangle|_{H_\A^1(\T_{j+1}(F))}
|T(\tilde\mu_h^{\triangle,F}
-\tilde\mu_h^{\triangle,F,j})|_{H_\A^1(\T_{j+1}(F))}.
\end{multline*}
Let $\nu_h^{\triangle,K,j}\in\tilde\Lambda_h^{\triangle,K,j}$ be equal to zero on all faces of elements of $\T_H\backslash\T_j(F)$ and equal to $\tilde\mu_h^{\triangle,F}$ otherwise. Using Galerkin best approximation property,~\eqref{e:traceestdelta}, and Theorem~\ref{t:decaythmhc} we obtain
\begin{multline*}
|T(\tilde\mu_h^{\triangle,F}-\tilde\mu_h^{\triangle,F,j})|_{H_\A^1({\T}_{j+1}(K))}^2
\le|T(\tilde\mu_h^{\triangle,F}-\tilde\mu_h^{\triangle,F,j})|_{H_\A^1(\triH)}^2
\le|T(\tilde\mu_h^{\triangle,F}-\nu_h^{\triangle,F,j})|_{H_\A^1(\triH)}^2
\\ 
\le \Nf^2\astab|T\tilde\mu_h^{\triangle,F}|_{H_\A^1(\triH\backslash\T_{j-1}(K))}^2 
\le \Nf^2\astab e^{-\frac{[(j-1)/2]}{1+\Nf\astab}}|T\tilde\mu_h^{\triangle,F}|_{H_\A^1(\triH)}^2.
\end{multline*}
We combine the above results to obtain  
\begin{multline*}
|T\tilde\psi_h^\triangle|_{H_\A^1(\triH)}^2
\le \Nf^2\astab e^{-\frac{[(j-1)/2]}{2(1+\Nf\astab)}}
\sum_{F\in\btriH}
|T\tilde\psi_h^\triangle|_{H_\A^1({\T}_{j+1}(F))}
|T\tilde\mu_h^{\triangle,F}|_{H_\A^1(\triH)}
\\
\le c\Nf^2\astab e^{-\frac{[(j-1)/2]}{2(1+\Nf\astab)}}j^d
|T\tilde\psi_h^\triangle|_{H_\A^1(\triH)}
\biggl(\sum_{F\in\btriH}|T\tilde\mu_h^{\triangle,F}|_{H_\A^1(\triH)}^2\biggr)^{1/2}, 
\end{multline*}
and finally gather that 
\[
|T\tilde\mu_h^{\triangle,F}|_{H_\A^1(\triH)}^2
=(\tilde\mu_h^{\triangle,F},TP^\triangle T\lambda_h^F)_{\partial \triH}
=(\tilde\mu_h^{\triangle,F},T\lambda_h^F)_{\btriH}
=\int_{\tau_1\cup\tau_2}\A\bgrad(T\tilde\mu_h^{\triangle,F})\cdot\bgrad(T\lambda_h^F)\,d\xx. 
\]
From Cauchy--Schwarz and $|T\tilde\mu_h^{\triangle,F}|_{H_\A^1(\triH)}\le|T\lambda_h^F|_{H_\A^1(\tau_1\cup\tau_2)}$ we have
\[
\sum_{F\in\btriH}|T\tilde\mu_h^{\triangle,F}|_{H_\A^1(\triH)}^2\le|T\lambda_h^F|_{H^1_\A(\triH)}^2.
\]
\end{proof}

The LSD method is defined by computing
\begin{equation}\label{e:uhjhighdef}
u_h^{\text{LSD},j}
=u_h^{\text{LSD},0,j}+T \lambda_h^{\text{LSD},j}+\tilde T\bg,
\qquad
\lambda_h^{\text{LSD},j}=\lambdarm+\tlambda_h^{0,\Pi,j}+\tilde\lambda_h^{\triangle,j}, 
\end{equation}
based on modifications of~\eqref{e:tlrmeqtn},~\eqref{e:lambdadef}, and the fourth equation of~\eqref{e:mc3}. Indeed, define $\tlambda_h^{0,\Pi,j} \in
\tilde{\Lambda}_h^{0,\Pi}$ from 
\begin{multline}\label{e:tlrmplusjeqtn} \bigl((I-P^{\triangle,j}T)\tilde\mu_h^{0,\Pi},T(I-P^{\triangle,j}T)\tlambda_h^{0,\Pi,j}\bigr)_{\btriH}
  =-\bigl((I-P^{\triangle,j}T)\tilde\mu_h^{0,\Pi},(I-T\tilde{P}^{\triangle,j})\tilde T\bg\bigr)_{\btriH}
  \\
  -\bigl((I-P^{\triangle,j}T) \tilde\mu_h^{0,\Pi},T(I-P^{\triangle,j}T)\lambdarm  \bigr)_{\btriH}
  \quad\text{for all } \tilde\mu_h^{0,\Pi}\in\tilde\Lambda_h^{0,\Pi},  
\end{multline}
and compute $\tilde\lambda_h^{\triangle,j}\in\tilde\Lambda_h^{\triangle}$ and $u_h^{0,\text{LSD},j}\in\VVRM$ from
\begin{gather}
\tilde\lambda_h^{\triangle,j}
=-P^{\triangle,j}(T\lambdarm+T\tilde\lambda^{0,\Pi,j})-\tilde{P}^{\triangle,j}\tilde T\bg, \label{e:lambdahminusj}
\\
(\murm,u_h^{0,\text{LSD},j})_{\btriH}
=-(\murm,T\lambdarm  +T\tilde\lambda_h^{0,\Pi,j}+T\tilde\lambda_h^{\triangle,j})_{\btriH}
-(\murm,\tilde T\bg)_{\btriH}\quad\text{for all }\murm\in\LRM. \label{e:uzerohigh}
\end{gather}

The next result presents an error estimate of the method.
\begin{theorem}\label{t:erroresthc}
Let $u_h$ be defined by~\eqref{e:alternative} or one of its equivalent formulations~\eqref{e:weak-hybridh},~\eqref{disp-stress2}, and let  $u_h^{\text{LSD},j}$ be as in~\eqref{e:uhjhighdef} for $j\ge1$. Then there exists a constant $c$ such that
\[
|u_h-u_h^{\text{LSD},j}|_{H_\A^1(\triH)}^2
\le cj^{2d}\Nf^4\alpha_{\rm{stab}}^2e^{-\frac{[(j-1)/2]}{1+\Nf\astab}}
\bigl(|T\lambda_h|_{H_\A^1(\triH)}^2 +|\tilde{T}g|_{H_\A^1(\triH)}^2 \bigr).
\]
\end{theorem} 
\begin{proof}
It follows immediately from the definitions of $u_h$ and $u_h^{\text{LSD},j}$ that 
\[
|u_h-u_h^{\text{LSD},j}|^2_{H_\A^1(\triH)}
=|T(\lambda_h-\lambda_h^{\text{LSD},j})|_{H_\A^1(\triH)}^2
=((\lambda_h-\lambda_h^{\text{LSD},j}),T(\lambda_h-\lambda_h^{\text{LSD},j})\bigr)_{\btriH}. 
\]
Defining $\nu_h^{LSD,j}=(I-P^{\triangle,j}T)\lambdarm+(I-P^{\triangle,j}T)\tlambda_h^{0,\Pi}-\tilde P^{\triangle,j}\tilde T\bg$, let 
\begin{multline}\label{e:orthohc}
\bigl((\lambda_h-\lambda_h^{\text{LSD},j}),T(\lambda_h-\lambda_h^{\text{LSD},j})\bigr)_{\btriH}
\\
=\bigl((\lambda_h-\nu_h^{\text{LSD},j}),T(\lambda_h-\lambda_h^{\text{LSD},j})\bigr)_{\btriH}
+\bigl((\nu_h^{\text{LSD},j}-\lambda_h^{\text{LSD},j}),T(\lambda_h-\lambda_h^{\text{LSD},j})\bigr)_{\btriH}.
\end{multline}
Since $\nu_h^{\text{LSD},j}-\lambda_h^{\text{LSD},j}=(I-P^{\triangle,j}T)(\tlambda_h^{0,\Pi}-\tlambda^{0,\Pi,j}_h)$ and $\tlambdarm_h-\tlambda^{0,\Pi,j}_h\in\tilde\Lambda_h^{0,\Pi}$, then by using~\eqref{e:tlrmplusjeqtn} and~\eqref{e:mc3}, respectively, the second term of the right-hand side of~\eqref{e:orthohc} vanishes. Indeed,  from~\eqref{e:uhjhighdef},~\eqref{e:tlrmplusjeqtn},~\eqref{e:lambdahminusj}, 
\begin{equation*}
\bigl((I-P^{\triangle,j}T)\tmurm,T\lambda_h^{\text{LSD},j}\bigr)_{\btriH}
=-\bigl((I-P^{\triangle,j}T)\tmurm,\tilde T\bg\bigr)_{\btriH}
  \quad\text{for all }\tmurm\in\tilde\Lambda_h^{0,\Pi}, 
\end{equation*}
and from~\eqref{e:mc3} we have
\[
\bigl((I-P^{\triangle,j}T)\tmurm,T\lambda_h\bigr)_{\btriH}
=-\bigl((I-P^{\triangle,j}T)\tmurm,\tilde T\bg\bigr)_{\btriH}
\]
since $\tmurm\in\tilde\Lambda_h^0$ and $P^{\triangle,j}T\tmurm\in\tilde\Lambda_h^f$. Thus, choosing $\tmurm=\tlambda_h^{0,\Pi}-\tlambda^{0,\Pi,j}_h$, 
\[
\bigl((\nu_h^{\text{LSD},j}-\lambda_h^{\text{LSD},j}),T(\lambda_h-\lambda_h^{\text{LSD},j})\bigr)_{\btriH}
=\bigl((I-P^{\triangle,j}T)(\tlambda_h^{0,\Pi}-\tlambda_h^{0,\Pi,j}),T(\lambda_h-\lambda_h^{\text{LSD},j})\bigr)_{\btriH}
=0. 
\]

Next, it follows from~\eqref{e:orthohc} and the Cauchy--Schwarz inequality that 
\begin{multline*}
|u_h-u_h^{\text{LSD},j}|_{H_\A^1(\triH)}^2
=\bigl((\lambda_h-\nu_h^{\text{LSD},j}),
T(\lambda_h-\lambda_h^{\text{LSD},j})\bigr)_{\btriH}
=(\lambda_h-\nu_h^{\text{LSD},j},u_h-u_h^{\text{LSD},j})_{\btriH}
\\
\le|T(\lambda_h-\nu_h^{\text{LSD},j})|_{H_\A^1(\triH)}
|u_h-u_h^{\text{LSD},j}|_{H_\A^1(\triH)},
\end{multline*}
since $\lambda_h-\nu_h^{\text{LSD},j}\in\tildeLambda_h$. We now use Lemma~\ref{e:invariance} where $(P^\triangle-P^{\triangle,j})T\tilde\lambda_h^\triangle=0$, and then Lemmas~\ref{l:errorPtjhc} and~\ref{l:errorPjhc} to obtain
\begin{multline}
|u_h-u_h^{\text{LSD},j}|_{H_\A^1(\triH)}^2
\le \bigl(|T(P^\triangle-P^{\triangle,j})T\lambda_h|_{H_\A^1(\triH)}^2
+ T(P^\triangle-\tilde{P}^{\triangle,j})\tilde{T}g|_{H_\A^1(\triH)}^2\bigr)
\\
\le cj^{2d}\Nf^4\alpha_{\rm{stab}}^2e^{-\frac{[(j-1)/2]}{1+\Nf\astab}}
\bigl( |T\lambda_h|_{H_\A^1(\triH)}^2+|\tilde{T}g|_{H_\A^1(\triH)}^2 \bigr).
\end{multline}
\end{proof}
\begin{rem}\label{r:sharpesthc}
We note that $|T\lambda_h|_{H_\A^1(\triH)}^2\le2|u_h|_{H_\A^1(\triH)}^2+2|\tilde{T}g|_{H_\A^1(\triH)}^2$,  therefore
\[
|T\lambda_h|_{H_\A^1(\triH)}^2+|\tilde{T}g|_{H_\A^1(\triH)}^2 
\le4|u-u_h|_{H_\A^1(\triH)}^2+4|u|_{H_\A^1(\triH)}^2+3|\tilde{T}g|_{H_\A^1(\triH)}^2. 
\]
Defining global and local Poincar\'e constants to obtain \label{p:Poincconst}
\[ 
\|u\|_{L^2(\Omega)}\le C_{G,P}|u|_{H_\A^1(\triH)},
\qquad
|\tilde{T}g|_{H_\A^1(\triH)}\le c_P H\|g\|_{L^2(\Omega)},
\]
it follows from Theorem~\ref{t:erroresthc} that if $j$ is taken such that 
\[
cj^{2d}\Nf^4\alpha_{\rm{stab}}^2e^{-\frac{[(j-1)/2]}{1+\Nf^2\alpha_{\rm{stab}}}}(4\HH^2+4C^2_{P,G}+3c^2_pH^2)
\le\HH^2, 
\]
then, from~\eqref{e:HHdef}, 
\begin{equation*}
|u-u_h^{\text{LSD},j}|_{H_\A^1(\triH)}\le\HH\|g\|_{L^2(\Omega)}.
\end{equation*}
We obtain $j=\mathcal{O}(\Nf^2 \astab\log((C_{P,G}+c_pH)/\HH))$. There is a light $\log$ dependence of $j$ on the Poincar\'e constants $C_{P,G}$ and
  $c_p$. 
\end{rem}
\begin{rem}\label{r:equil}
We remark that the post-processed stress $\bsigma_h^{\text{LSD},j}=\A\bgrad[T(\lambdarm+\tlambda_h^{0,\Pi,j}+\tilde\lambda_h^{\triangle,j})+\tilde T\bg]$ is in equilibrium, in the sense that
\begin{equation}\label{e:equil}
\int_\tau\bsigma_h^{\text{LSD},j}\cdot\bgrad v\,d\xx
=\int_\tau\bg v\,d\xx\quad\text{for all } v\in H_0^1(\tau).
\end{equation}
Indeed, given $v\in H_0^1(\tau)$, let $v^0$ be constant and $\tilde v\in H^1(\tau)$ with zero average such that $v=v^0+\tilde v$. Hence
\begin{multline*}
\int_\tau\bg v\,d\xx
=\int_\tau\A\bgrad\tilde T\bg\cdot\bgrad\tilde v\,d\xx+\int_\tau\bg v^0\,d\xx
\\
=\int_\tau[\bsigma_h^{{\text{LSD},j}}-\A\bgrad T(\lambdarm  +\tlambda_h^{0,\Pi,j} +
    \tilde\lambda_h^{\triangle,j})]\cdot\bgrad\tilde v\,d\xx+\int_\tau\bg v^0\,d\xx. 
\end{multline*}
However, $\tilde v=-v^0$ on $\partial\tau$ since $v\in H_0^1(\tau)$, and then 
\begin{multline*}
-\int_\tau\A\bgrad T(\lambdarm  +\tlambda_h^{0,\Pi,j}+\tilde\lambda_h^{\triangle,j})\cdot\bgrad\tilde v\,d\xx+\int_\tau\bg v^0\,d\xx
=-(\lambdarm  +\tlambda_h^{0,\Pi,j}+\tilde\lambda_h^{\triangle,j},\tilde v)_{\partial\tau}+\int_\tau\bg v^0\,d\xx
\\
=(\lambdarm  +\tlambda_h^{0,\Pi,j}+\tilde\lambda_h^{\triangle,j},v^0)_{\partial\tau}+\int_\tau\bg v^0\,d\xx
=0, 
\end{multline*}
where the final estimate follows from~\eqref{e:weak-e1}. Thus,~\eqref{e:equil} holds. 
\end{rem}

\subsection{Decaying Low-Contrast} \label{ss:lowcon} 
The method for the low-contrast case is a particular case of the high-contrast one by choosing
\[
\astab>\alpha.
\]
It then follows from Lemma~\ref{l:eigenbound} that $1\le\alpha_i^F\le\alpha$ for all the local eigenvalues. Thus, $\tilde\Lambda_h^\triangle=\tildeLambda_h^f$ and $\tilde\Lambda_h^\Pi=\emptyset$ from~\eqref{e:Lpmdef} and~\eqref{e:ltpmdef}.

Of course, the numerical scheme and the estimates developed in Section~\ref{ss:dhc} hold. However, several simplifications are possible when the coefficients have low-contrast, leading to sharper estimates. We remark that in this case, our method is similar to that of~\cite{MR3591945}, with some differences. First we consider that $\tilde T$ can be nonzero. Also, our scheme is defined by a sequence of elliptic problems, avoiding the annoyance of saddle point systems. We had to reconsider the proofs, in our view simplifying some of them.


We now establish the following fundamental result for low-contrast. The technique used for the proof is extended in Lemma~\ref{l:decayhc} for the high contrast case. 
The Lemma~\ref{l:decayhc} is now replaced by the following.
\begin{customlemma}{\ref{l:decayhc}'}\label{l:decay}
Let $v\in\VH$ where $\supp v\subset K\in\triH$, and $\tilde\mu_h^f=Pv$. Then, for any integer $j\ge1$, 
\begin{equation*}
|T\tilde\mu_h^f|_{H_\A^1(\triH\backslash\T_{j+1}(K))}^2
\le \Nf\alpha |T\tilde\mu_h^f|_{H_\A^1(\T_{j+1}(K)\backslash\T_j(K))}^2, 
\end{equation*}
where $\alpha$ is defined in Lemma~\ref{l:eigenbound}.
\end{customlemma}
\begin{proof} 
Choose $\tilde\nu_h^f \in\tilde\Lambda_h^f$ defined by $\tilde\nu_h^f|_F=0$ if $F$ is a face of an element of ${\T}_{j}(K)$ and $\tilde\nu_h^f|_F= \tilde\mu_h^f|_F$ otherwise. We obtain, as in the proof of Lemma~\ref{l:decayhc}, 
\begin{equation*}
|T\tilde\mu_h^f|_{H_\A^1(\triH\backslash{\T}_{j+1}(K))}^2
\le\sum_{\tau\in{\T}_{j+1}(K)\backslash{\T}_{j}(K)}(\tilde\mu_h^f -
\tilde\nu_h^{f},T\tilde\mu_h^f)_{\partial\tau}. 
\end{equation*}

Next, let $F$ be a face of an element $\tau\in\T_{j+1}(K) \backslash\T_j(K)$ and let  $\chi_F$ be the characteristic function of $F$ being identically equal to one on $F$ and zero on $\partial\tau\backslash F$. See that $\chi_F(\tilde\mu_h^f-\tilde\nu_h^f)$ vanishes for faces $F$ on $\tau\in\T_{j+1}(K)\backslash\T_j(K)$ that are not shared by an element in $\T_j(K)$ however it is not known a priori how many. Let $\hat\F(\tau)$ be the set of faces of $\tau$ that are also faces of an element in $\T_j(K)$. For the shared faces, $\chi_F(\tilde\mu_h^f-\tilde\nu_h^f)=\chi_F\tilde\mu_h^f$. Since it is possible that all $d$ faces of $\tau$ share faces of elements of $\T_{j}$, hence, the following bound always holds: 
\[
|T (\tilde\mu_h^f-\tilde\nu_h^f)|^2_{H_\A^1(\tau)} \leq
\sum_{F\in\hat\F(\tau)} |T(\chi_F\tilde\mu_h^f)|^2_{H_\A^1(\tau)}.   
\]
It follows from~\eqref{e:traceest} that $|T(\chi_F\tilde\mu_h^f)|_{H_\A^1(\tau)}^2\le\alpha |T\tilde\mu_h^f|^2_{H_\A^1(\tau)}$ and the final result holds since $\hat\F(\tau)$ has at most $\Nf$ faces.
\end{proof}
Comparing Lemma~\ref{l:decay} with Lemma~\ref{l:decayhc}, we see that the right hand side of the estimate depends on $\T_{j+1}(K)\backslash\T_j(K))$ instead of $\T_{j+1}(K)\backslash\T_{j-1}(K))$. This more local versions is possible due to~\eqref{e:traceest} instead of~\eqref{e:traceestdelta}. The result of Lemma~\ref{l:decay} allows a better decay estimate with respect to $j$ in Theorem~\ref{t:decaytheorem} compared to that of Theorem ~\ref{t:decaythmhc} (cf. also Theorems~\ref{t:errorest} and~\ref{t:erroresthc}). However, in general, the estimate is not as good since $\alpha$ increases with the local contrast; see Lemma~\ref{l:eigenbound}. 

\begin{rem} Even though the proof of Lemma~\ref{l:decay} concentrates the analysis to hybrid discretization and $u_h \in H^1(\triH)$, the analysis also extends easily to mixed finite element discretizations or to finite dimensional approximations of $H^1(\triH)$. We also could have used the flux approach for the proof, that is, let $\bsigma^F=\A\bgrad T\mu_F$, $\bsigma_\I^F=\bgrad T_1\mu_F$, $\bsigma=\A\bgrad T\tilde\mu_h^f$ and $\bsigma_\I=\bgrad T_\I\tilde\mu_h^f$, or the corresponding ones arising from the lower-order Raviart--Thomas case (associate to a triangulation ${\mathcal T}_h(\tau)$). We would have 
\begin{multline*}
  \|\A^{-1/2}\bsigma^F\|^2_{L^2(\tau)}
  \le\frac{1}{\amin^\tau}\|\bsigma^F_\I\|^2_{L^2(\tau)}
  \le\frac{{c_1}}{\amin^\tau}|\mu_F|^2_{H^{-1/2}(\partial \tau)}
  \le\frac{{c_2}}{\amin^\tau} \beta_{H/h}^2 |\tilde\mu_h^f|^2_{H^{-1/2}(\partial\tau)}
  \\
  \le\frac{{\gamma}}{\amin^\tau} \beta_{H/h}^2\|\bsigma_\I\|^2_{L^2(\tau)}
  \le {\gamma}\frac{\amax^\tau}{\amin^\tau}\beta_{H/h}^2\|\A^{-1/2}\bsigma\|^2_{L^2(\tau)}. 
\end{multline*}
\end{rem}

\begin{rem} \label{rhochoice} 
We note that ~\cite{MR1759911}*{Lemma 4.4} is based on $H^{-1/2}(\tau)$ norms and therefore it holds whether we use $H^1(\tau), \Hdivtau$ or corresponding discretized versions inside $\tau$. We point out that the $h$ in $\log(H/h)$ is related to the space $\Lambda_h$, not to the interior. The $\alpha$ in this paper is estimated as the worst case scenario, that is, using Lemma \ref{l:extensions} and  ~\cite{MR1759911}*{Lemma 4.4}. For particular cases of coefficients $\A$ and discretizations for $H^1(\tau)$ or $\Hdivtau$, sharper estimated for $\alpha$ can  be derived using weighted and generalized Poincar\'e inequalities techniques and partitions of unity that conform with $\A$ in order to avoid large energies on the interior extensions~\cites{MR1974504,MR2047078,MR2867661,MR2317926,MR1921914,MR3013465,MR3047947,MR2456834,MR2810804,MR3552482,MR2861254}.
\end{rem}   

Following a similar analysis, we next replace Theorem~\ref{t:decaythmhc} with $\astab$ instead of $\alpha$.

\begin{customtheorem}{\ref{t:decaythmhc}'}\label{t:decaytheorem}
Let $v\in\VH$ such that $\supp v\subset K$, and $\tilde\mu_h^f=Pv$. Then, for any integer $j\ge1$, 
\begin{equation}\label{e:decaytildel}
|T\tilde\mu_h^f|_{H_\A^1(\triH\backslash\T_{j+1}(K))}^2
\le e^{-\frac j{1+\Nf\alpha}} |T\tilde\mu_h^f|_{H_\A^1(\triH)}^2.
\end{equation}
\end{customtheorem}
\begin{proof} 
If $\tilde\mu_h^f=Pv$ where $\supp v\subset K$, then using Lemma~\ref{l:decay} we have 
\begin{equation*}
|T\tilde\mu_h^f|_{H_\A^1(\triH\backslash\T_{j+1}(K))}^2
\le \Nf\alpha|T\tilde\mu_h^f|_{H_\A^1(\triH\backslash\T_j(K))}^2
-\Nf\alpha|T\tilde\mu_h^f|_{H_\A^1(\triH\backslash\T_{j+1}(K))}^2
\end{equation*}
and then
\[
|T\tilde\mu_h^f|_{H_\A^1(\triH\backslash\T_{j+1}(K))}^2
\le\frac{\Nf\alpha}{1+\Nf\alpha}|T\tilde\mu_h^f|_{H_\A^1(\triH\backslash\T_j(K))}^2
\le e^{-\frac1{1+\Nf\alpha}}|T\tilde\mu_h^f|_{H_\A^1(\triH\backslash\T_j(K))}^2,
\]
and the theorem follows.
\end{proof} 

Note that if the coefficient $\A$ is nearly constant and isotropic inside each element $\tau$, the exponential decay will depend only mildly on $\beta_{H/h}$. However, the decay of $Pv$ deteriorates as the contrast $\kappa$ gets larger. That is the reason for developing a contrast-free method in Section~\ref{ss:dhc}, where we consider high contrast and eliminate the $\alpha$ dependence. 

For each fixed element $K$ and positive integer $j$, let $\tilde\Lambda_h^{f,K,j}\subset\tilde\Lambda_h^f$ 
be the set of functions of  $\tilde\Lambda_h^f$ which vanish on faces of elements in $\triH\backslash\T_j(K)$. We introduce the operator $P^{K,j}:\VH\to\tilde\Lambda_h^{f,K,j}$ such that, for $v\in\VH$, 
\begin{equation}\label{e:operPkfdef}
(\tilde\mu_h^f,TP^{K,j}v)_{\btriH}=(\tilde\mu^f_h,v_K)_{\btriH}
  \quad\text{for all }\tilde\mu_h^f\in\tilde\Lambda^{f,K,j}_h.
\end{equation}
where $v_K$ is equal to $v$ on $K$ and zero otherwise. For $v\in H^1(\triH)$ we define then $\tilde{P}^jTv\in\tilde\Lambda_h^f$ by
\begin{equation}\label{e:PKjdef}
\tilde P^jv=\sum_{K\in \triH}P^{K,j}v_K. 
\end{equation} \\
We next introduce a localization based on faces. Let $\tilde\Lambda_h^{f,F,j}\subset\tilde\Lambda_h^f$ \label{p:tltfdef} be the set of functions of $\tilde\Lambda_h^f$ vanishing on faces of elements in
$\triH \backslash \F_j(F)$. Let us decompose $\lambda_h \in \Lambda_h$ into
$\lambda_h = \sum_{F \in \F_H} \lambda_h^F$ where  $\lambda_h^F = \lambda_h$ on the
face $F$ and zero everywhere else on $\F_H$. The operator $P^{F,j}:\Lambda_h\to\tilde\Lambda_h^{f,F,j}$ is defined as
\begin{equation}\label{e:PFjdef}
(\tilde\mu_h^f,TP^{F,j}T\lambda_h)_{\btriH}
=(\tilde\mu_h^f,T\lambda_h^F)_{\partial\tau_1^F}+(\tilde\mu_h^f,T\lambda_h^F)_{\partial\tau_2^F}
\quad\text{for all }\tilde\mu_h^f\in\tilde\Lambda^{f,F,j}_h. 
\end{equation}
We define then ${P}^jT\lambda_h\in\tilde\Lambda_h^f$ by 
\begin{equation}\label{e:Pjdef}
P^jT\lambda_h=\sum_{F\in\F_H}P^{F,j}T \lambda_h. 
\end{equation}
\begin{customlemma}{\ref{l:errorPtjhc}'}\label{l:errorPj}
Consider $v\in H^1(\triH)$, and the operators $P$ defined by~\eqref{e:Pdef} and $\tilde P^j$ by~\eqref{e:PKjdef}. Then
\begin{equation}\label{e:final}
|T(P-\tilde{P}^j)v|_{H_\A^1(\triH)}^2
\le c\Nf^4j^{2d}\alpha^2e^{-\frac{j-2}{1+\Nf\alpha}}|v|^2_{H_\A^1(\triH)}.
\end{equation}
\end{customlemma}
\begin{proof} 
We follow similar arguments of the proof of Lemma~\ref{l:errorPtjhc}. For $K\in\triH$, let $\tilde\mu_h^{f,K} = Pv_K$ and $\tilde\mu_h^{f,K,j}=P^{K,j}v_K$, and $\tilde{\psi}_h^f  =\sum_{K\in\triH}(\tilde\mu_h^{f,K}-\tilde\mu^{f,K,j}_h)$. Let $\tilde\psi^{f,K}_h\in\tilde\Lambda_h^f$ be defined by $\tilde\psi^{f,K}_h|_F=0$ if $F$ is a face of an element of $\T_j(K)$ and $\tilde\psi^{f,K}_h|_F=\tilde\psi_h^f|_F$, otherwise. Then 
\begin{equation} \label{splitting}
|T\tilde\psi_h^f|_{H_\A^1(\triH)}^2 
=\sum_{K \in\triH} \sum_{\tau \in\triH} 
(\tilde\psi_h^f-\tilde\psi^{f,K}_h, T(\tilde\mu_h^{f,K}-\tilde\mu_h^{f,K,j}))_{\partial\tau}
+(\tilde\psi^{f,K}_h, T(\tilde\mu_h^{f,K} - \tilde\mu_h^{f,K,j}))_{\partial \tau}. 
\end{equation}
See that the second term of~\eqref{splitting} vanishes since 
\[
\sum_{\tau\in\triH}(\tilde\psi^{f,K},T(\tilde\mu_h^{f,K}-\tilde\mu_h^{f,K,j}))_{\partial\tau}
=\sum_{\tau\in\triH}(\tilde\psi^{f,K},T\tilde\mu_h^{f,K})_{\partial\tau}=0.
\]
For the first term of~\eqref{splitting}, as in Lemma~\ref{l:decay} we use~\eqref{e:traceest}, yielding
\begin{multline*}
\sum_{\tau\in\triH}\bigl(\tilde\psi_h^f-\tilde\psi^{f,K}_h,T(\tilde\mu_h^{f,K}-\tilde\mu_h^{f,K,j})\bigr)_{\partial\tau}
\le\sum_{\tau\in\T_{j+1}(K)}
|T(\tilde\psi_h^f-\tilde\psi_h^{f,K})|_{H_\A^1(\tau)}
|T(\tilde\mu_h^{f,K}-\tilde\mu_h^{f,K,j})|_{H_\A^1(\tau)}
\\
\le \Nf\alpha^{1/2}|T\tilde\psi_h^f|_{H_\A^1(\T_{j+1}(K))}|T(\tilde\mu_h^{f,K}
-\tilde\mu_h^{f,K,j})|_{H_\A^1(\T_{j+1}(K))}.
\end{multline*}
Let $\nu_h^{f,K,j} \in \tilde\Lambda_h^{f,K,j}$ be equal to zero on all faces of elements of $\T_H\backslash\T_j(K)$
and equal to $\tilde\mu_h^{f,K}$ otherwise. Using Galerkin best approximation property,~\eqref{e:traceest} and Theorem~\ref{t:decaytheorem} we obtain
\begin{multline*}
|T(\tilde\mu_h^{f,K}-\tilde\mu_h^{f,K,j})|_{H_\A^1({\T}_{j+1}(K))}^2
\le|T(\tilde\mu_h^{f,K}-\tilde\mu_h^{f,K,j})|_{H_\A^1(\triH)}^2
\le|T(\tilde\mu_h^{f,K}-\nu_h^{f,K,j})|_{H_\A^1(\triH)}^2
\\ 
\le \Nf^2\alpha|T\tilde\mu_h^{f,K}|_{H_\A^1(\triH\backslash\T_{j-1}(K))}^2 
\le \Nf^2\alpha e^{-\frac{j-2}{1+\Nf\alpha}}|T\tilde\mu_h^{f,K}|_{H_\A^1(\triH)}^2.
\end{multline*}
We gather the above results to obtain  
\begin{multline*}
|T\tilde\psi_h^{f,K}|_{H_\A^1(\triH)}^2
\le \Nf^2\alpha e^{-\frac{j-2}{2(1+\Nf\alpha)}}
\sum_{K\in\triH}|T\tilde\psi_h^{f,K}|_{H_\A^1({\T}_{j+1}(K))}|T\tilde\mu_h^{f,K}|_{H_\A^1(\triH)}
\\
\le \Nf^2\alpha e^{-\frac{j-2}{2(1+\Nf\alpha)}}cj^d|T\tilde\psi_h^{f,K}|_{H_\A^1(\triH)}
\biggl(\sum_{K\in\triH}|T\tilde\mu_h^{f,K}|_{H_\A^1(\triH)}^2\biggr)^{1/2}. 
\end{multline*}

We finally gather that 
\[
|T\tilde\mu_h^{f,K}|_{H_\A^1(\triH)}^2
=(\tilde\mu_h^{f,K},TPv_K)_{\partial \triH}
=(\tilde\mu_h^{f,K}, v_K)_{\partial \triH}
=\int_K\A\bgrad(T\tilde\mu_h^{f,K})\cdot\bgrad(v_K)\,d\xx
\]
and from Cauchy--Schwarz, $|T\tilde\mu_h^{f,K}|_{H_\A^1(\triH)}\le|v_K|_{H_\A^1(K)}$,
we have
\[
\sum_{K\in\triH}|T\tilde\mu_h^{f,K}|_{H_\A^1(\triH)}^2\le|v|_{H^1_\A(\triH)}^2.
\]
\end{proof}
\begin{rem}\label{tildePjdecaying}
Using arguments as in the proof of Lemma~\ref{l:errorPjhc} with estimate~\eqref{e:traceest}, we obtain
\begin{equation}\label{e:final2}
|T(P-P^j)T \lambda_h|_{H_\A^1(\triH)}^2
\le cj^{2d}\alpha^2e^{-\frac{j-2}{1+\Nf\alpha}}|T\lambda_h|^2_{H_\A^1(\triH)}
\end{equation}
for $\lambda_h\in\Lambda_h$. 
\end{rem}
Recall from~\eqref{e:lambdadef} and~\eqref{disp-stress2} that $u_h=\uurm_h+T\lambda_h+\tilde T\bg$, where 
\[
\lambda_h=(I-PT)\lambdarm+(I-PT)\tlambdarm_h-P\tilde T\bg.
\]
Motivated by the above decaying results and~\eqref{e:mc3}--\eqref{e:tlrmeqtn}, we define the solution of the localized algorithm by 
\begin{equation}\label{e:uhjdef}
u_h^j=u_h^{0,j}+T\lambda_h^j+\tilde T\bg,
\quad
\text{where }\lambda_h^j=(I-P^jT)\lambdarm+(I-P^jT)\tlambdarmj_h-\tilde{P}^j\tilde T\bg,
\end{equation}
and $\lambdarm$ solves the first equation of~\eqref{e:mc3}. Also, similarly to~\eqref{e:tlrmplusjeqtn}, $\tlambdarmj_h\in\LTRM$ solves
\begin{multline}\label{e:tlrmjeqtn}
  \bigl((I-P^jT)\tmurm,T(I-P^jT)\tlambdarmj_h\bigr)_{\btriH}
  =-\bigl((I-P^jT)\tmurm,(I-T\tilde{P}^j)\tilde T\bg\bigr)_{\btriH}
  \\
  -\bigl((I-P^jT)\tmurm,T(I-P^jT)\lambdarm\bigr)_{\btriH}
  \quad\text{for all }\tmurm\in\LTRM, 
\end{multline}
and similarly to the fourth equation of~\eqref{e:mc3}, we obtain $u_h^{0,j}$ by 
\[
(\murm,u_h^{0,j})_{\btriH}
=-(\murm,T\lambdarm+T\tlambdarmj_h+T\tilde\lambda_h^{f,j})_{\btriH}
-(\murm,\tilde T\bg)_{\btriH}\quad\text{for all }\murm\in\LRM. 
\]
and as in~\eqref{e:lambdadef} we have defined 
\begin{equation}\label{e:lambdahfj}
\tilde\lambda_h^{f,j}=-P^j(T\lambdarm+T\tlambdarmj_h)-\tilde{P}^j\tilde T\bg.
\end{equation}
A fundamental difference between our discretization and some
multiscale mixed finite elements methods such as the ones in
~\cites{MR2306414,MR2831590} is that here we avoid solving a saddle point problem by computing $\lambda_h^{0,j}=\lambda^0$ using the first equation of~\eqref{e:mc3}, while there the equation~\eqref{e:tlrmjeqtn} is extended to the whole space $\Lambda^0\oplus\LTRM$ and solved together with an equation for $u_h^{0,j}$. 

A new version of Theorem~\ref{t:erroresthc} follows.
\begin{customtheorem}{\ref{t:erroresthc}'}\label{t:errorest}
Let $u_h$ and $u_h^j$ be defined by~\eqref{disp-stress2} and~\eqref{e:uhjdef}. Then there exists a constant $c$ such that
\begin{equation*}
|u_h-u_h^j|^2_{H_\A^1(\triH)}
\le cj^{2d}\Nf^4\alpha^2e^{-\frac{j-2}{1+\Nf\alpha}}
\bigl(|T\lambda_h|_{H_\A^1(\triH)}^2+|\tilde{T}g|_{H_\A^1(\triH)}^2\bigr).
\end{equation*}
\end{customtheorem} 
\begin{proof}
It follows immediately from the definitions of $u_h$ and $u_h^j$ that 
\[
|u_h-u_h^j|^2_{H_\A^1(\triH)}
=|T(\lambda_h-\lambda_h^j)|_{H_\A^1(\triH)}^2
=((\lambda_h-\lambda_h^j),T(\lambda_h-\lambda_h^j)\bigr)_{\btriH}. 
\]
Defining $\nu_h^j=(I-P^jT)\lambdarm+(I-P^jT)\tlambdarm_h-\tilde{P}^j\tilde T\bg$, it follows that 
\begin{equation}\label{e:ortho}
  \bigl((\lambda_h-\lambda_h^j),T(\lambda_h-\lambda_h^j)\bigr)_{\btriH}
  =\bigl((\lambda_h-\nu_h^j),T(\lambda_h-\lambda_h^j)\bigr)_{\btriH}
  +\bigl((\nu_h^j-\lambda_h^j),T(\lambda_h-\lambda_h^j)\bigr)_{\btriH}.
 \end{equation}
Since $\tlambdarm_h-\tlambdarmj_h \in \tilde\Lambda_h^0$ and $\nu_h^j-\lambda_h^j=(I-P^jT)(\tlambdarm_h-\tlambdarmj_h)\in\tilde\Lambda_h$ then by using~\eqref{e:tlrmjeqtn} and~\eqref{e:mc3}, respectively, the second term of the right-hand side of~\eqref{e:ortho} vanishes. Indeed, from~\eqref{e:uhjdef},~\eqref{e:tlrmjeqtn},~\eqref{e:lambdahfj}, 
\begin{equation*}
  \bigl((I-P^jT)\tmurm,T\lambda_h^j\bigr)_{\btriH}
  =-\bigl((I-P^jT)\tmurm,\tilde T\bg\bigr)_{\btriH}
  \quad\text{for all }\tmurm\in\LTRM, 
\end{equation*}
and from~\eqref{e:mc3} we have
\[
\bigl((I-P^jT)\tmurm,T\lambda_h\bigr)_{\btriH}
=-\bigl((I-P^jT)\tmurm,\tilde T\bg\bigr)_{\btriH}
\]
since $\tmurm \in \tilde\Lambda_h^0$ and $P^jT\tmurm \in \tilde\Lambda_h^f$. Thus, choosing $\tmurm=\tlambdarm_h-\tlambdarmj_h$,
\[
\bigl((\nu_h^j-\lambda_h^j),T(\lambda_h-\lambda_h^j)\bigr)_{\btriH}
=\bigl((I-P^jT)(\tlambdarm_h-\tlambdarmj_h),T(\lambda_h-\lambda_h^j)\bigr)_{\btriH}
=0. 
\]

By Cauchy--Schwarz inequality, we have from~\eqref{e:ortho} that 
\begin{multline*}
|u_h-u_h^j|_{H_\A^1(\triH)}^2
=\bigl((\lambda_h-\nu_h^j),T(\lambda_h-\lambda_h^j)\bigr)_{\btriH}
=(\lambda_h-\nu_h^j,u_h-u_h^j)_{\btriH}
\\
\le|T(\lambda_h-\nu_h^j)|_{H_\A^1(\triH)}|u_h-u_h^j|_{H_\A^1(\triH)},
\end{multline*}
since $\lambda_h-\nu_h^j\in\tildeLambda_h$. We now use Lemma~\ref{e:invariance} where $(P-P^j)T\tilde\lambda_h^f=0$ and then Lemma~\ref{l:errorPj} and Remark~\ref{tildePjdecaying} to obtain
\begin{multline*}
|u_h-u_h^j|_{H_\A^1(\triH)}^2
\le |T(\lambda_h-\nu_h^j)|_{H_\A^1(\triH)}^2
= |T(P-P^j)T\lambda_h +T(P-\tilde{P}^j)\tilde{T}g|_{H_\A^1(\triH)}^2
\\
\le cj^{2d}\Nf^4\alpha^2e^{-\frac{j-2}{1+\Nf\alpha}}
\bigl(|T\lambda_h|_{H_\A^1(\triH)}^2+|\tilde Tg|_{H_\A^1(\triH)}^2\bigr).
\end{multline*}
\end{proof}
\begin{rem}\label{r:sharpest}
Arguing as remark~\ref{r:sharpesthc}, we gather from Theorem~\ref{t:errorest} that if $j$ is taken such that 
\[
cj^{2d}\Nf^4\alpha^2e^{-\frac{j-2}{1+\Nf\alpha}}(4\HH^2+4C^2_{P,G}+3c^2_pH^2)
\le\HH^2, 
\]
then, 
\begin{equation*}
|u-u_h^{\text{LSD},j}|_{H_\A^1(\triH)}\le\HH\|g\|_{L^2(\Omega)}.
\end{equation*}

We obtain $j=\mathcal{O}(\Nf^2\kappa\log((C_{P,G}+c_pH)/\HH)\log^2(H/h))$, and $j$ is large not only in the high contrast case ($\kappa$ large), but also if $h\ll H$. 
\end{rem}

\section{Finite Dimensional Right-Hand Side}\label{ss:findim}
We first note that the solution given by the four steps of~\eqref{e:mc3} is exact, in the sense that it solves~\eqref{e:weak-hybridh} as well, and hence the solution error vanishes. To compute $\tlambdarm_h$ and $\tilde\lambda_h^f$ is necessary to compute $P\tilde T\bg$, and that hampers efficiency, and it is 
the only part of algorithm that does not permit exact pre-processing.  
 In order to allow pre-processing, we replace the space $L^2(\Omega)$, which contains $\bg$, by a finite dimensional one in such a way that the solution error is $O(\HH)$. If $v_i$ is a basis function of this finite dimensional space, $P\tilde Tv_i$ can be built in advance as a pre-processing computation. To guarantee order $\HH$ convergence in the energy norm, it is enough to define the basis using elementwise generalized eigenvalues problems. For each $\tau\in \triH$, find eigenpairs $(\sigma_i,v_i) \in (\RR,H^1(\tau))$, $i\in\NN$, such that 
\begin{equation}\label{e:eigenproblem}
\int_\tau\A\bgrad v_j\cdot\bgrad w\,d\xx
=\sigma_j\int_\tau v_iw\,d\xx\quad\text{for all }w\in H^1(\tau)  
\end{equation}
Let us order the eigenvalues $0=\sigma_1<\sigma_2\le\sigma_3 \le\dots$.
 First note that $\sigma_i\to\infty$ since $H^1(\tau)$ is compactly embedded in $L^2(\tau)$ and $\A$ is  uniformly positive definite and bounded. Define $J_\tau$ as the minimum integer such that $\sigma_{J_\tau+1}^{-1}\le c\HH^2$, where $\HH$ was introduced in~\eqref{e:HHdef},
and define the space 
\begin{equation*}\label{e:eigenspace}
\mathcal F_{J_\tau}=\spann\{v_1,\dots,v_{J_\tau}\}. 
\end{equation*}
Then we obtain 
\[
v\in{\mathcal F_{J_\tau}^{\perp_{L^2}}}
\implies\int_\tau v^2\,d\xx
\le c\HH^2\int_\tau\A\bgrad v\cdot\bgrad v\,d\xx.
\]
Indeed, let $v\in{\mathcal F_{J_\tau}^{\perp_{L^2}}}$. Using the fact that the eigenfunctions $v_{\tau,i}$ define an orthogonal basis in both $H_\A^1(\tau)$ semi-norm and $L^2(\tau)$ norm, we can write $v=\sum_{i\ge J_\tau+1}\alpha_iv_i$, and then
\begin{multline}\label{e:Poincarehetero}
\int_\tau v^2\,d\xx
=\sum_{i\ge J_\tau+1}\alpha_i^2\int_\tau v_i^2\,d\xx
=\sum_{i\ge J_\tau+1}\alpha_i^2\sigma_i^{-2}\int_\tau\A\bgrad v_i \cdot\bgrad v_i \,d\xx \le
\\
\sigma_{J_\tau+1}^{-2}\sum_{i\ge J_\tau+1}\alpha_i^2\int_\tau\A\bgrad v_i \cdot \bgrad v_i 
\,d\xx
\le (c\HH)^2\sum_{i\ge J_\tau+1}\alpha_i^2\int_\tau \A\bgrad v_i \cdot \bgrad v_i\,d\xx
\\
=(c\HH)^2\int_\tau \A\bgrad v \cdot\bgrad v \,d\xx.
\end{multline}
Clearly, $\mathcal F_{J_\tau}$ is nonempty since it contains the constant function.

Let
\begin{equation}\label{e:Fjdef}
\mathcal F_J=\{v\in\VH:\,v|_\tau\in\mathcal F_{J_\tau}\}, 
\end{equation}
and let $\Pi_J\bg\in\mathcal F_J$ be the $L^2(\Omega)$ orthogonal projection of $\bg$ on $\mathcal F_{J}$. \label{p:Pidef}
Note that the computation of $\tilde T\bg$ becomes now trivial since, on each element $\tau$ and $i>1$, $\tilde Tv_i=\sigma_i^{-1}v_i$. This follows from the second equation of~\eqref{e:definitionT}, and~\eqref{e:eigenproblem}. 

\begin{lemma}\label{l:exactappro}
Consider $u_J\in H_0^1(\Omega)$ weakly satisfying $-\div\A\bgrad u_J=\Pi_J\bg$. Then 
\[
|u-u_J|_{H_\A^1(\triH)}\le c\HH\|\bg\|_{L^2(\Omega)}=c\HH\|f\|_{L^2(\Omega)}.
\]
\end{lemma}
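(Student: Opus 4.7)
The plan is to proceed via the standard energy argument for the error equation, exploiting the elementwise $L^2_\rho$-orthogonality built into the projection $\Pi_J$ together with the Poincar\'e-type bound~\eqref{e:Poincarehetero} that was set up precisely for this purpose.

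First, I would subtract the two weak formulations: since $u$ and $u_J$ both lie in $H_0^1(\Omega)$ and solve $-\div\A\bgrad u=\rho\bg$ and $-\div\A\bgrad u_J=\rho\Pi_J\bg$ respectively, the error $e=u-u_J\in H_0^1(\Omega)$ satisfies
\[
\int_\Omega \A\bgrad e\cdot\bgrad v\,d\xx=\int_\Omega \rho(\bg-\Pi_J\bg)v\,d\xx\quad\text{for all }v\in H_0^1(\Omega).
\]
Testing with $v=e$ gives $|e|_{H_\A^1(\Omega)}^2=(\bg-\Pi_J\bg,e)_{L_\rho^2(\Omega)}$, and since $u,u_J\in H_0^1(\Omega)$ this coincides with $|e|_{H_\A^1(\triH)}^2$.

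Next, I would localize and exploit the defining property of $\Pi_J$. On each $\tau\in\triH$, $(\bg-\Pi_J\bg)|_\tau$ belongs to $\mathcal F_{J_\tau}^{\perp_{L^2_\rho}}$. Let $\Pi_\tau$ denote the $L_\rho^2(\tau)$-orthogonal projection onto $\mathcal F_{J_\tau}$; then
\[
\int_\tau \rho(\bg-\Pi_J\bg)e\,d\xx=\int_\tau \rho(\bg-\Pi_J\bg)(e-\Pi_\tau e)\,d\xx,
\]
because $\Pi_\tau e\in\mathcal F_{J_\tau}$ is annihilated by $\bg-\Pi_J\bg$ in the $L_\rho^2(\tau)$ pairing. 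Cauchy--Schwarz on each element and then on the sum over $\tau$ gives
\[
|e|_{H_\A^1(\triH)}^2\le \|\bg-\Pi_J\bg\|_{L_\rho^2(\Omega)}\Bigl(\sum_{\tau\in\triH}\|e-\Pi_\tau e\|_{L_\rho^2(\tau)}^2\Bigr)^{1/2}.
\]

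The key step is then to apply~\eqref{e:Poincarehetero} to $e-\Pi_\tau e\in\mathcal F_{J_\tau}^{\perp_{L^2_\rho}}$, yielding $\|e-\Pi_\tau e\|_{L_\rho^2(\tau)}\le c\HH\,|e|_{H_\A^1(\tau)}$; summing over $\tau$ I get the factor $c\HH\,|e|_{H_\A^1(\triH)}$. Dividing by $|e|_{H_\A^1(\triH)}$ and using the trivial bound $\|\bg-\Pi_J\bg\|_{L_\rho^2(\Omega)}\le\|\bg\|_{L_\rho^2(\Omega)}$ (since $\Pi_J$ is an $L_\rho^2$-orthogonal projection) closes the estimate. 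The identity $\|\bg\|_{L_\rho^2(\Omega)}=\|f\|_{L^2_{1/\rho}(\Omega)}$ comes directly from $f=\rho\bg$. The only delicate point is keeping the elementwise orthogonality visible --- the rest is routine; no inf-sup or saddle-point machinery is needed here because both $u$ and $u_J$ are continuous $H_0^1$ solutions of the same type of Dirichlet problem, only with different right-hand sides.
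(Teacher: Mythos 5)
Your proposal is correct and follows essentially the same route as the paper's proof: form the error equation, test with the error, use the $L^2_\rho$-orthogonality of $\bg-\Pi_J\bg$ to insert $e-\Pi_J e$, and conclude with the spectral Poincar\'e bound~\eqref{e:Poincarehetero} together with $\|\bg-\Pi_J\bg\|_{L^2_\rho(\Omega)}\le\|\bg\|_{L^2_\rho(\Omega)}$. The only compressed step is $\|e-\Pi_\tau e\|_{L^2_\rho(\tau)}\le c\HH\,|e|_{H_\A^1(\tau)}$, which implicitly uses that the eigenfunctions are orthogonal in the $H_\A^1(\tau)$ inner product as well (so the projection is energy-stable), exactly the point the paper makes explicit.
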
 
\begin{proof} 
Note that the error 
$e_J=u-u_J\in H_0^1(\Omega)$ weakly satisfies $-\div\A\bgrad e_J=(I-\Pi_J)\bg$, and note that $u_J$ does not necessary belong to $\mathcal F_{J}$. We have
\begin{multline*}
  |e_J|_{H_\A^1(\triH)}^2
  =(\A\bgrad e_J,\bgrad e_J)_\triH
=((I-\Pi_J)\bg,e_J)_\triH
=((I-\Pi_J)\bg,e_J-\Pi_J e_J)_\triH
\\
\le\|\bg\|_{L^2(\Omega)}\|e_J-\Pi_J e_J\|_{L^2(\Omega)}
    \le c\HH\|\bg\|_{L^2(\Omega)} |e_J-\Pi_J e_J|_{H_\A^1(\triH)} 
\le c\HH\|\bg\|_{L^2(\Omega)}|e_J|_{H_\A^1(\triH)},
\end{multline*}
where we have used that  $\Pi_J e_J \in \mathcal F_J$, and $\Pi_J$ is 
an orthogonal projection in both $L^2(\Omega)$ and $H_\A^1(\triH)$ inner-products.
\end{proof}

\begin{rem}
The above arguments can be extended to the discrete case $u_h$ and $u_{h,J}$ solutions of~\eqref{e:mc3} with $\bg$ and $\Pi_J\bg$ as the forcing term, respectively. Denote $\delta\bg=(I-\Pi_J)\bg$. The solution error $e_{h,J}=u_h-u_{h,J}$ can be decomposed as $e_{h,J}=e_{h,J}^0+T\delta\lambda_h+\tilde T\delta\bg$, where $\delta\lambda_h=(I-PT)\delta\tilde\lambda^0-P\tilde T\delta\bg$. 
Note that $\delta\lambda^0$ vanishes. Using the definitions of $T$ and $\tilde{T}$, and~\eqref{e:weak-hybridh} we obtain 
\begin{equation}\label{e:ehJerror}
|e_{h,J}|_{H_\A^1(\triH)}^2
=|T\delta\lambda_h+\tilde T\delta\bg|_{H_\A^1(\triH)}^2
=(e_{h,J},\delta\bg)_{L^2(\Omega)}
\le c \HH|e_{h,J}|_{H_\A^1(\triH)}\|\bg\|_{L^2(\Omega)}.
 \end{equation}
\end{rem}

\begin{rem}
On Section~\ref{s:Hlocal} we have assumed $\bg$ as the right-hand side. In case we use $\Pi_J\bg$ as the right hand side, we obtain the same bound since $\|\Pi_J\bg\|_{L^2(\Omega)} \le\|\bg\|_{L^2(\Omega)}$. Thus, if $u_{h,J}^j$ is the localized solution of the method with $\Pi_J\bg$ as the right-hand side , then
  \[
  \|u_h-u_{h,J}^j\|_{H_\A^1(\triH)}
  \le\|u_h-u_{h,J}\|_{H_\A^1(\triH)}+\|u_{h,J}-u_{h,J}^j\|_{H_\A^1(\triH)}, 
  \]
  and a final estimate follows from~\eqref{e:ehJerror} and Theorem~\ref{t:errorest} or~\ref{t:erroresthc} with $\bg$ replaced by $\Pi_J\bg$. 
\end{rem}

\section{Algorithms} \label{s:Algorithms}
In this section we give a practical guideline on how both ``versions'' of the method could be implemented.  We highlight the high-contrast case here, commenting on simplifications for the general case afterwards. 
Also, for simplicity, we consider $g\in\VVRM$, i.e., $g$ is piecewise constant; then $\tilde Tg=0$. If that is not the case and a pre-processing as described in Section~\ref{ss:findim} is desired, then compute $\Pi_J\bg$ as described in Lemma~\ref{l:exactappro}, and proceed with the computation replacing $\bg$ by $\Pi_J\bg$. This is particularly useful in the case of multiple right-hand sides. 

\begin{algorithm}
\caption{Localized Spectral Decomposition--LSD}
\label{algorithmlsd}
\begin{algorithmic}[1]
\State Generate a partition $\triH$ of $\OO$, and a partition $\faceh$ of the faces of elements in $\triH$
\State Define bases for $\VVRM$, $\LRM$, $\LTRM$ and $\tilde\Lambda_h^f$
\label{a:spacedef}
\State Build approximations $T_h$ for $T$ in~\eqref{e:definitionT}, and compute the action of $T_h$ on all basis functions of $\LRM$, $\LTRM$ and $\tilde\Lambda_h^f$ 
\label{a:Tapprox}
\State Compute generalized eigenvalues from~\eqref{e:eqeig1},~\eqref{e:eqeig2}. Define $\tilde\Lambda_h^\Pi$ and $\tilde\Lambda_h^\triangle$ by~\eqref{e:ltpmdef}, and set $\widetilde\Lambda_h^{0,\Pi}=\LTRM\oplus\tilde\Lambda_h^\Pi$
\label{a:eigen}
\State Find $\lambdarm\in\LRM$ from the first equation of~\eqref{e:mc3}
\label{a:loprblm}
\State Compute the action of $P^{\triangle,j}T$ for all basis functions of $\tilde\Lambda_h^{0,\Pi}$ from~\eqref{e:Pjdefhc}
\label{a:Pjdef}
\State Find $\tlambda_h^{0,\Pi,j}\in\widetilde\Lambda_h^{0,\Pi}$ from~\eqref{e:tlrmplusjeqtn}
\label{a:lamopij}
\State Find $\tilde\lambda_h^{\triangle,j}\in\tilde\Lambda_h^\triangle$ from~\eqref{e:lambdahminusj}
\State Find $u_h^{0,\text{LSD},j}\in\VVRM$ from~\eqref{e:uzerohigh} 
\State Compute $u_h^{\text{LSD},j}$ and $\bsigma_h^{\text{LSD},j}$ from~\eqref{e:uhjhighdef} and Remark~\ref{r:equil}
\end{algorithmic}
\end{algorithm}

\begin{rem}
The low-contrast case is a particular and simpler instance of the high-contrast one, and can be chosen by picking ${\alpha}_{\rm{stab}}>\alpha$. If however one is only interested in low-contrast cases then there is no need to compute eigenvalues, and the algorithm is simpler. Indeed, Step~\ref{a:eigen} of Algorithm~\ref{algorithmlsd} is no longer necessary, and one should set $\tilde\Lambda_h^\triangle=\tilde\Lambda_h^f$, $\widetilde\Lambda_h^{0,\Pi}=\LTRM$.  Also, in Step~\ref{a:Pjdef} let $P^{\triangle,j}={P}^j$ from~\eqref{e:Pjdef}. 
\end{rem}

Note that the sub-problems in Algorithm~\ref{algorithmlsd} are either local, semi-local or global. The problems of Step~\ref{a:Tapprox} are local and can be solved in parallel. The eigenvalue problems of Step~\ref{a:eigen} are also local since each face $F$ belongs to at most two elements. The computation of $\lambdarm$ in Step~\ref{a:loprblm} involves a system that depends on $H$ only. Step~\ref{a:Pjdef} requires solving semi-local problems, since it involves solving systems~\eqref{e:PFjdefhc}, posed on local patches of elements. The computation of Step~\ref{a:lamopij} requires solving a global problem with dimension of $\tilde\Lambda_h^{0,\Pi}$. There are no matrix inversions for the remaining steps of the algorithm, since all the operators were pre-computed at this point.

\section{Numerical Examples}
In this section we consider numerical tests for three different problems. The first one involves a smooth problem with analytical solution, allowing exact error computation. In the second and third problem this is no longer possible, and we compare solutions obtained by the LOD or LSD methods with the exact
discrete solution obtained by solving the whole system~\eqref{e:weak-hybridh}. In the tests that follow (with the exception of the high-contrast channel Test~\ref{ss:hcchannels}), we use the following notation and data: 
\begin{itemize}
\item $H=1/8$ is the maximum size of macro-elements
\item $N_e=8$ is the dimension of $\tildeLambda_h^F$ on each face $F$
\item $h\approx H/9$ is the maximum size of micro-elements. 
\end{itemize}
For all tests, $\Omega=(0,1)\times(0,1)$, and the corresponding mesh and sub-mesh with the above parameters is as in Figure~\ref{f:mesh} (the exception is Table~\ref{t:lsdbeanbag2}). 
\begin{figure}
\centering
\epsfig{file=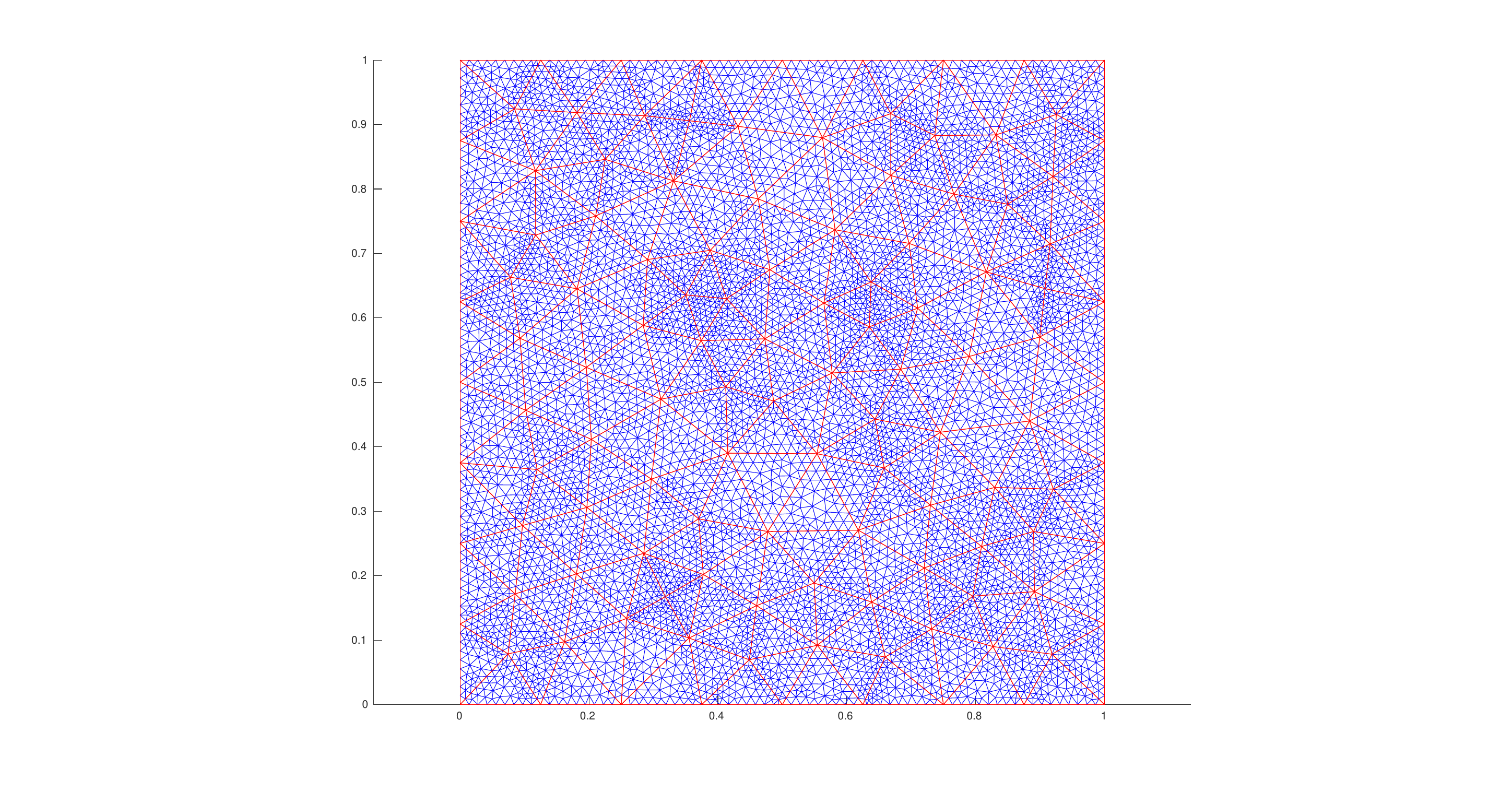, scale=0.3}
\caption{Mesh and submesh employed in all numerical tests. We consider $H=1/8$ and $h\approx H/9$.} 
\label{f:mesh}
\end{figure}

The acronyms NLOD stands for \emph{Non-local Orthogonal Decomposition} and corresponds to the case where the system~\eqref{e:weak-hybridh} is fully solved and no localization techniques are applied. 

\subsection{Exact Solution available}~\label{ss:exact}
Let $\A(x,y)=1$ and the exact solution $u(x,y)=x(x-1)y(y-1)$. For such kernel, we display in Figure~\ref{f:an_sol} the energy decay of Theorem~\ref{t:decaythmhc} if $\tilde\Lambda_h^\Pi$ is empty. 
\begin{figure}
\centering
\epsfig{file=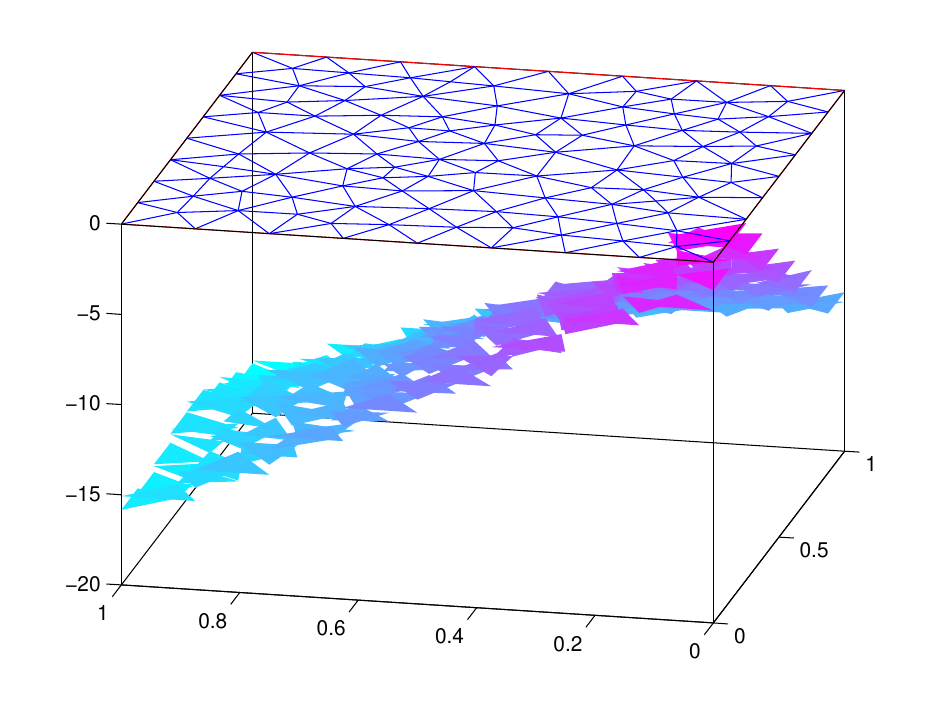, height=6cm, width=7.5cm}
\caption{Test~\ref{ss:exact}: Log-plot of energy decay in the case of empty $\tilde\Lambda_h^\Pi$.} 
\label{f:an_sol}
\end{figure}
The various relative errors with respect to the exact solution $u$ in the energy norm follow in Tables~\ref{t:lodanalyt} and~\ref{t:lsdanalyt}. In Table~\ref{t:lodanalyt} we compute the relative error for the NLOD, and for the the LOD with various localizations. As expected the error increases as one reduces the patch size where basis functions are computed. For Table~\ref{t:lsdanalyt} the analysis is more complex since both the localization \emph{and} $\astab$ play important roles. For a small $\astab=1.1$ the dimension of $\tilde\Lambda_h^\Pi$ (=1260) becomes large compared the the total dimension of $\tilde\Lambda_h^f$ (=2624) and the errors are small and comparable to the NLOD. For a large $\astab=3.0$ however, the dimension of $\tilde\Lambda_h^\Pi$ is relatively small (=634), that is, about two eigenvectors per each macro-element edge, and the errors are similar to the NLOD if the patches are not too small. These two eigenvectors are responsible for eliminating the singularities at the macro-elements corners that are found in the LOD method; see the two dominating eigenvalues in Figure~\ref{f:eig_osc}.

For $J=1$, there is a larger error, but the method is better than LOD (cf. Table~\ref{t:lodanalyt}). 
\begin{table}
 \centering \small
 \begin{tabular}{|l|l|}\hline
       & Relative energy error \\\hline
  NLOD & 0.0025 \\\hline
  LOD (J=3) & 0.0095 \\\hline
  LOD (J=2) & 0.0385 \\\hline
  LOD (J=1) & 0.0683 \\\hline
    \end{tabular}
\caption{Test~\ref{ss:exact}: LOD version with no localization (NLOD), and different localizations (J=1,2,3).}
\label{t:lodanalyt}
\end{table}

\begin{table}
\centering \small
\begin{tabular}{|l|l|l|}\hline
&\multicolumn{2}{c|}{Relative energy error}\\\hline
&$\astab=1.1$ (dim $\tildeLambda_h^\Pi=1260$)&$\astab=3.0$ (dim $\tildeLambda_h^\Pi=634$)\\\hline
LSD (J=3) & 0.0025 & 0.0025\\\hline
LSD (J=2) & 0.0025 & 0.0026\\\hline
LSD (J=1) & 0.0035 & 0.0229\\\hline
    \end{tabular}
\caption{Test~\ref{ss:exact}: Relative energy error for LSD versions with different localizations (J=1,2,3). The total dimension of $\tildeLambda_h^f$ is 2624.}
\label{t:lsdanalyt}
\end{table}

\subsection{Oscillatory kernel}\label{ss:osc}
Consider $f=1$ and the oscillatory kernel
\[
\A(x,y)=\frac{\beta-\alpha}2\bigl(1+\sin(2\pi x/\epsilon)\sin(2\pi y/\epsilon)\bigr)+\alpha, 
\]
where $\alpha=0.01$, $\beta=10$ and $\epsilon=1/16$.

In this example, we also test the use of \emph{incomplete LU Decomposition} of $j$-th fill-in to invert $P^\triangle$ instead of using $P^{\triangle,j}$ in the case of LSD, and to invert $P$ instead of using $P^j$ in the case of LOD. We name the above schemes \emph{iLSD} and \emph{iLOD}. As we show below, the results are encouraging, albeit the lack of theory.  

The eigenvalues of problems I (equation~\eqref{e:eqeig1}) for a fixed edge are plotted in Figure~\ref{f:eig_osc}. Note the clustering of the eigenvalues at 1. Then, Table~\ref{t:lodosc} presents the results for the LOD and iLOD (with incomplete LU) versions, with different localization levels. Finally, Tables~\ref{t:lsdosc} present results for LSD and iLSD (with incomplete LU) with different values of $\astab$. 
\begin{figure}
\centering
\epsfig{file=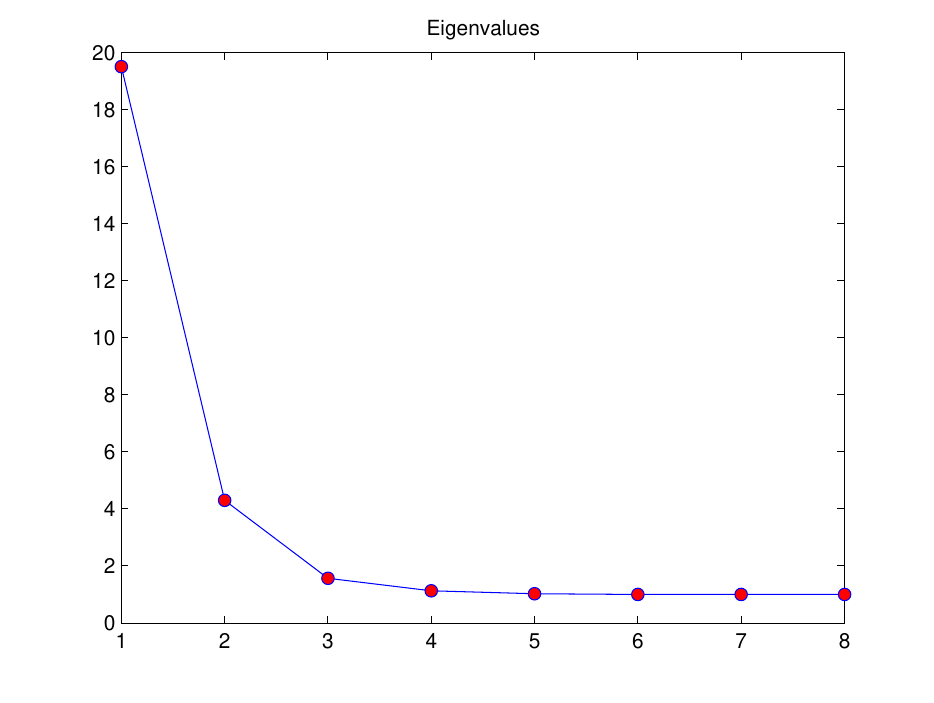, height=6cm, width=7.5cm}
\caption{Test~\ref{ss:osc}: Eigenvalues for problems I, II.} 
\label{f:eig_osc}
\end{figure}
\begin{table}
 \centering \small
 \begin{tabular}{|l|l|l|l|}\hline
       &  LOD&iLOD\\\hline
  J=3 &  0.0092 & 0.0040\\\hline
  J=2 &  0.0499 & 0.0071\\\hline
  J=1 &  0.0778 & 0.0186\\\hline
    \end{tabular}
\caption{Test~\ref{ss:osc}: Relative energy error for LOD and LOD with incomplete LU versions, with different localizations (J=1,2,3)}
\label{t:lodosc}
\end{table}
\begin{table}
 \centering \small
 \begin{tabular}{|l|l|l|l|}\hline
       &$\astab=1.1$&$\astab=3.0$ \\\hline
  LSD (J=3) &  5.7080e-11& 2.7741e-06\\\hline
  LSD (J=2) &  3.2054e-06& 7.0387e-04\\\hline
  LSD (J=1) &  0.0032& 0.0291\\\hline
 \end{tabular}
 \qquad \qquad
 \begin{tabular}{|l|l|l|l|}\hline
       &$\astab=1.1$&$\astab=3.0$ \\\hline
  iLSD (J=3) &  1.1617e-08& 1.7775e-05\\\hline
  iLSD (J=2) &  2.3785e-07& 8.1785e-05\\\hline
  iLSD (J=1) &  7.6412e-06& 6.1034e-04\\\hline
    \end{tabular}
\caption{Test~\ref{ss:osc}: Relative energy error for LSD versions with different localizations (J=1,2,3) using LSD and LSD with incomplete LU}
\label{t:lsdosc}
\end{table}

\subsection{Beanbag test}\label{ss:beabag}
Consider $\Omega=(0,1)\times(0,1)$, constant $f=1$ and 
\[
\A(x,y)=\begin{cases}1&\text{if }|x-0.5|+|y-0.25|<0.3,\\
                     \amin&\text{otherwise}.\end{cases}
\]
A refined exact solution and some approximations are depicted in Figure~\ref{f:beanbag_exact}, for $\amin=10^{-4}$. The figure on the left displays the ``exact'' solution, computed by solving~\eqref{e:weak-hybridh} without localization. On the top right we display the LOD solution for $J=1$. On the bottom figures, two solutions for $J=1$ are shown. On the left figure $\astab=3.0$, and on the right one $\astab=1.7$. 
\begin{figure}
\centering
\epsfig{file=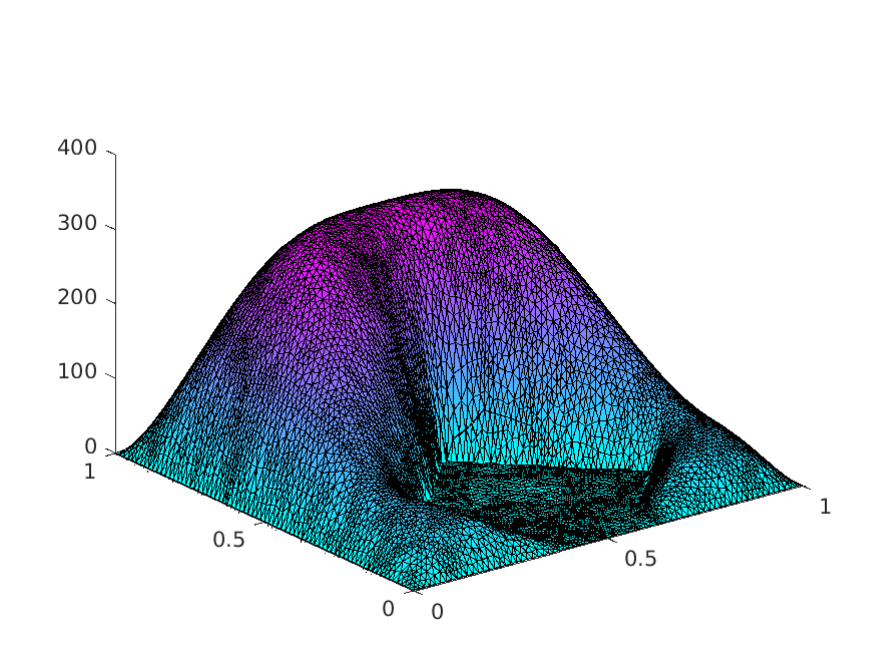, height=6cm, width=7.5cm}
\epsfig{file=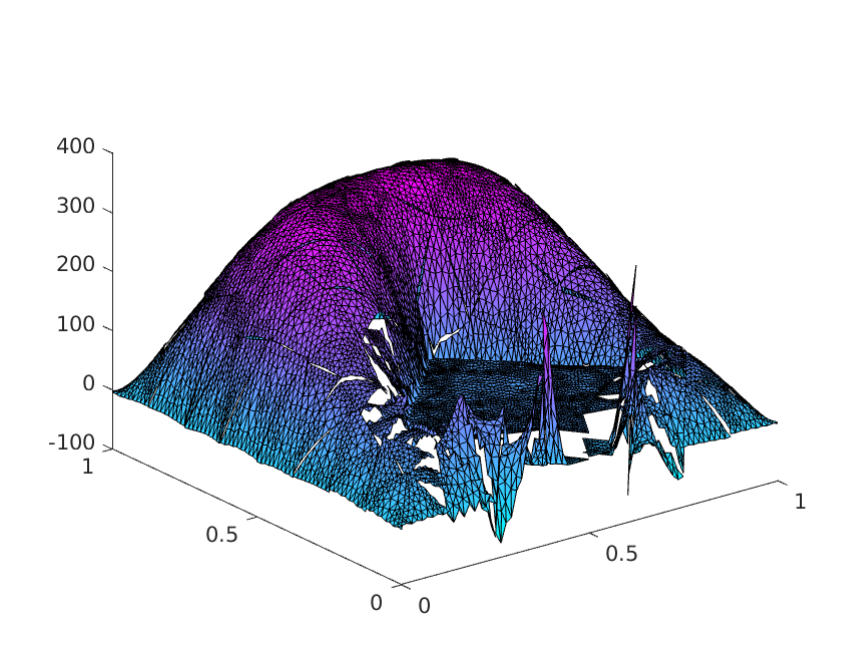, height=6cm, width=7.5cm}
\epsfig{file=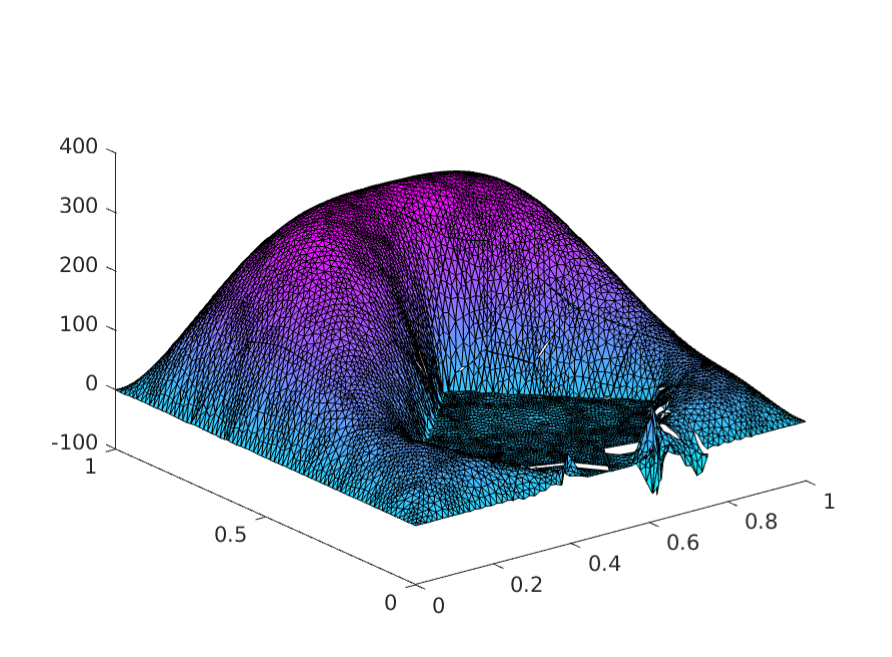, height=6cm, width=7.5cm}
\epsfig{file=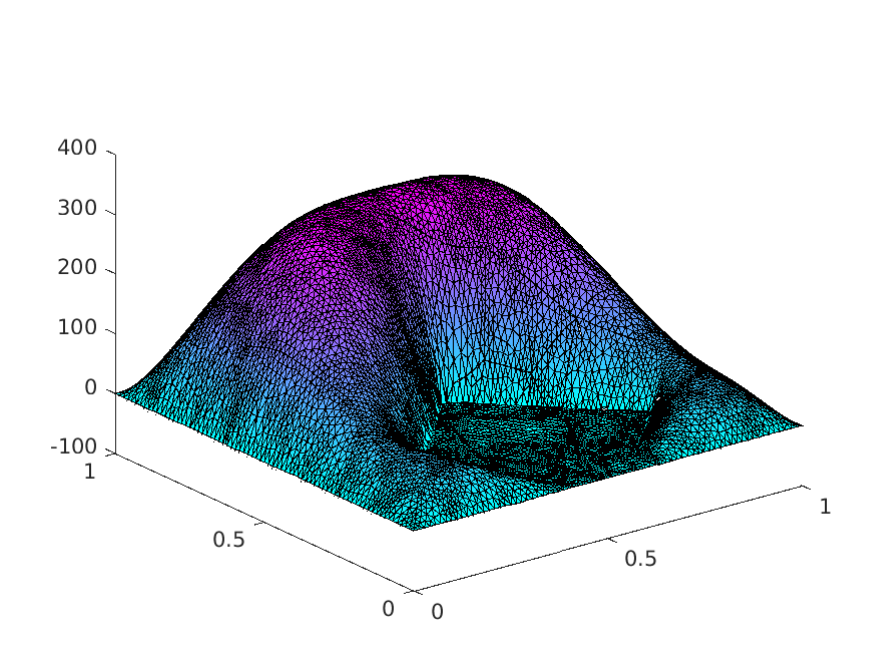, height=6cm, width=7.5cm}
\caption{Test~\ref{ss:beabag}. Top: Exact solution and LOD with $J=1$. Bottom: LSD solutions  with $J=1$ and $\astab=3.0$ and $\astab=1.7$.} 
\label{f:beanbag_exact}
\end{figure}
Next, Table~\ref{t:lodbeanbag} compares the approximation results using LOD for different localization levels, and Table~\ref{t:lsdbeanbag} presents the results for LSD with different localization levels and different $\astab$. The final Table~\ref{t:lsdbeanbag2} is also interesting, the number of eigenvalues per macro-element edge is not very sensitive with respect to $N_e$, it depends more on the coefficient $\A$.

\begin{table}
 \centering \small
 \begin{tabular}{|l|l|l|}\hline
       & Relative energy error\\\hline
  LOD (J=3) &  0.0616\\\hline
  LOD (J=2) &  0.1008\\\hline
  LOD (J=1) &  0.5289\\\hline
    \end{tabular}
\caption{Test~\ref{ss:beabag}: LOD version with different localizations (J=1,2,3)}
\label{t:lodbeanbag}
\end{table}
\begin{table}
 \centering \small
 \begin{tabular}{|l|l|l|l|l|}\hline
&\multicolumn{3}{c|}{Relative energy error}\\\hline
       &$\astab=1.1$ (dim $\tildeLambda_h^\Pi=1250$)&$\astab=1.7$ (dim $\tildeLambda_h^\Pi=734$) &$\astab=3.0$ (dim $\tildeLambda_h^\Pi=620$) \\\hline
  LSD (J=3) &  1.4477e-10&1.1590e-06& 1.1479e-05\\\hline
  LSD (J=2) &  5.4182e-05&0.0022& 0.0130\\\hline
  LSD (J=1) &  0.0274&0.0346& 0.2211\\\hline
    \end{tabular}
\caption{Test~\ref{ss:beabag}: Relative energy error for LSD versions with different localizations (J=1,2,3). The total dimension of the eigenspace $\tildeLambda_h^f$ is 2624}
\label{t:lsdbeanbag}
\end{table}

\begin{table}
 \centering \small
 \begin{tabular}{|l|l|l|l|l|l|l|l|}\hline
H&$N_e$&dim $\tildeLambda_h^f$&\multicolumn{3}{c|}{Relative energy error (dim $\tildeLambda_h^\Pi$)}
\\\hline
\multicolumn{3}{|c|}{}&$\astab=1.1$ &$\astab=1.7$  &$\astab=3.0$
\\\hline
$1/8$& $8$& 2624 &5.4182e-05 (1250)&0.0022 (734)& 0.0130 (620) \\\hline
$1/8$& $4$& 1312 & 3.4172e-05 (975)& 0.0063 (644)& 0.0072 (445) 
\\\hline
$1/16$& $8$& 10720 & 1.1227e-05 (5170)& 0.0016 (2948)& 0.0111 (2604)
\\\hline
$1/16$& $4$& 5360 & 3.0627e-05 (3991)& 0.0011 (2663)& 0.0122 (1869)
\\\hline
\end{tabular}
\caption{Test~\ref{ss:beabag}: Test with $H=1/16$ and $J=2$. Relative energy error for LSD versions with localization $J=2$. }
\label{t:lsdbeanbag2}
\end{table}

Finally, Figure~\ref{f:beanbag_contrast} shows how the relative energy error behaves with LOD (in blue) and LSD (in red) as the contrast (=$1/\amin$) increases. 
\begin{figure}
\centering
\epsfig{file=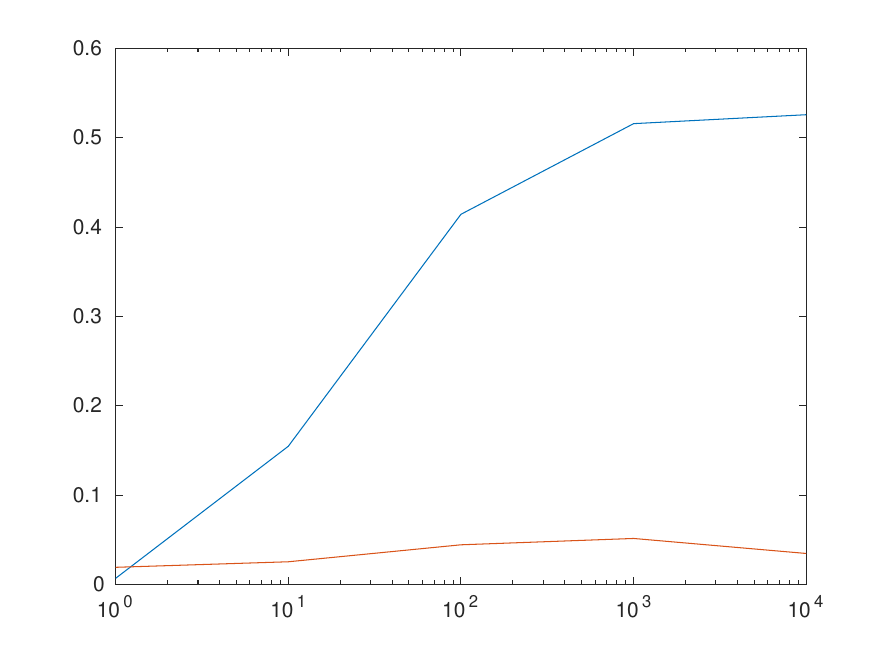, height=6cm, width=7.5cm}
\caption{Test~\ref{ss:beabag}: Relative energy error with respect to contrast (=$1/\amin$). The blue curve corresponds to LOD with $J=1$ and the red curve is related to the LSD method with $J=1$ and $\astab=1.7$} 
\label{f:beanbag_contrast}
\end{figure}

\subsection{High-contrast channels}\label{ss:hcchannels}
In this example we consider the macro element diameter $H=1/8$, the second-level mesh diameter $h=H/17$, and the dimension of $\tildeLambda_h^F$ on each face $F$ as $N_e=16$. For a given positive number $a$, the kernel $\A$ is defined by
\begin{equation}\label{e:coeffchann}
\A(x,y)=\begin{cases}
a&\text{if }\|(x-0.5,y-0.5)\|_2<1/40\text{ and }y>1/2,\\
a&\text{if }|y-0.6|<1/40,\\
a^{-1}&\text{otherwise.}
\end{cases}
\end{equation}
The level curve defined by $\A$ is depicted in Figure~\ref{f:channelcoeff}. 
\begin{figure}
\centering
\epsfig{file=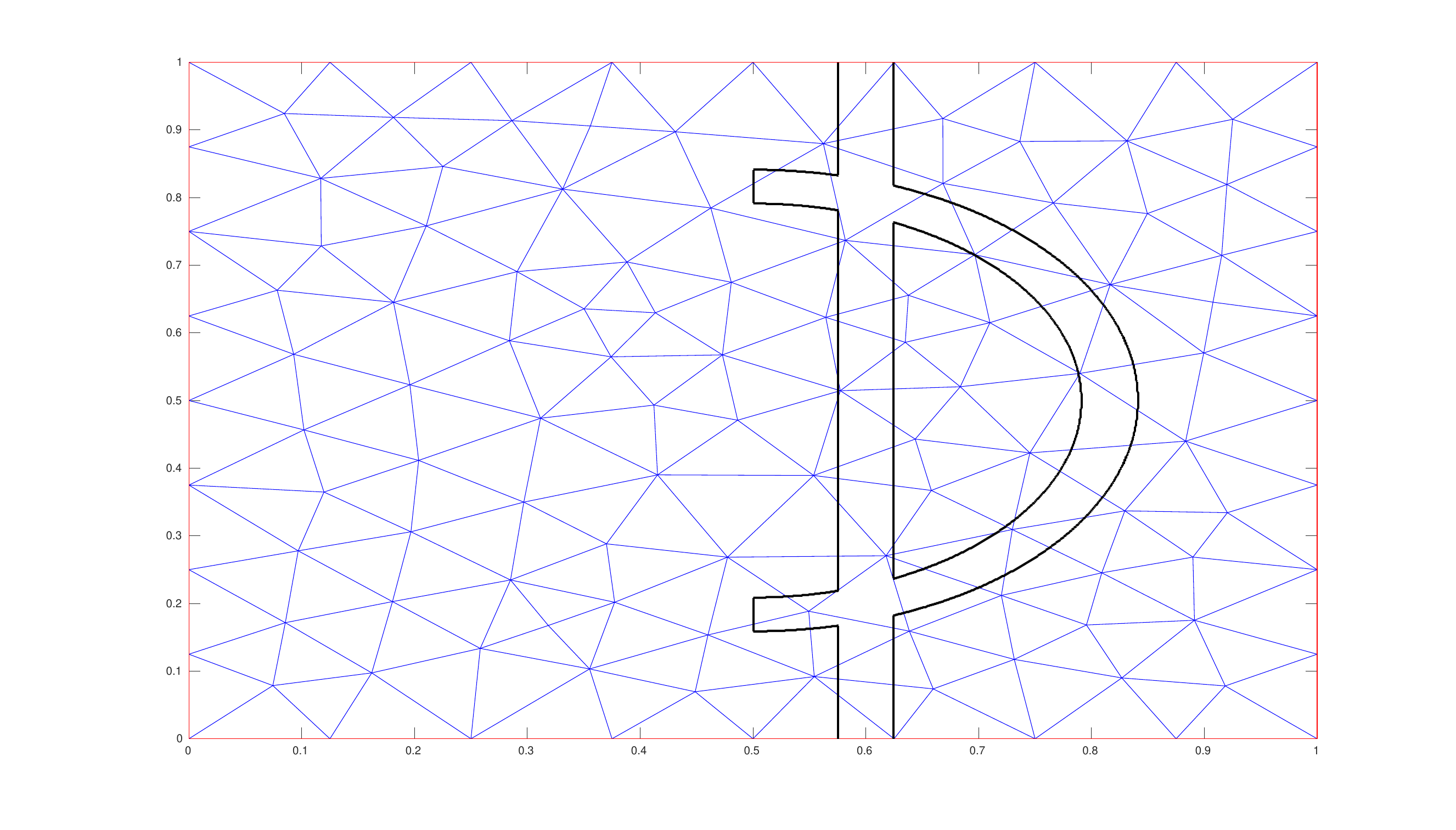, scale=0.15}
\epsfig{file=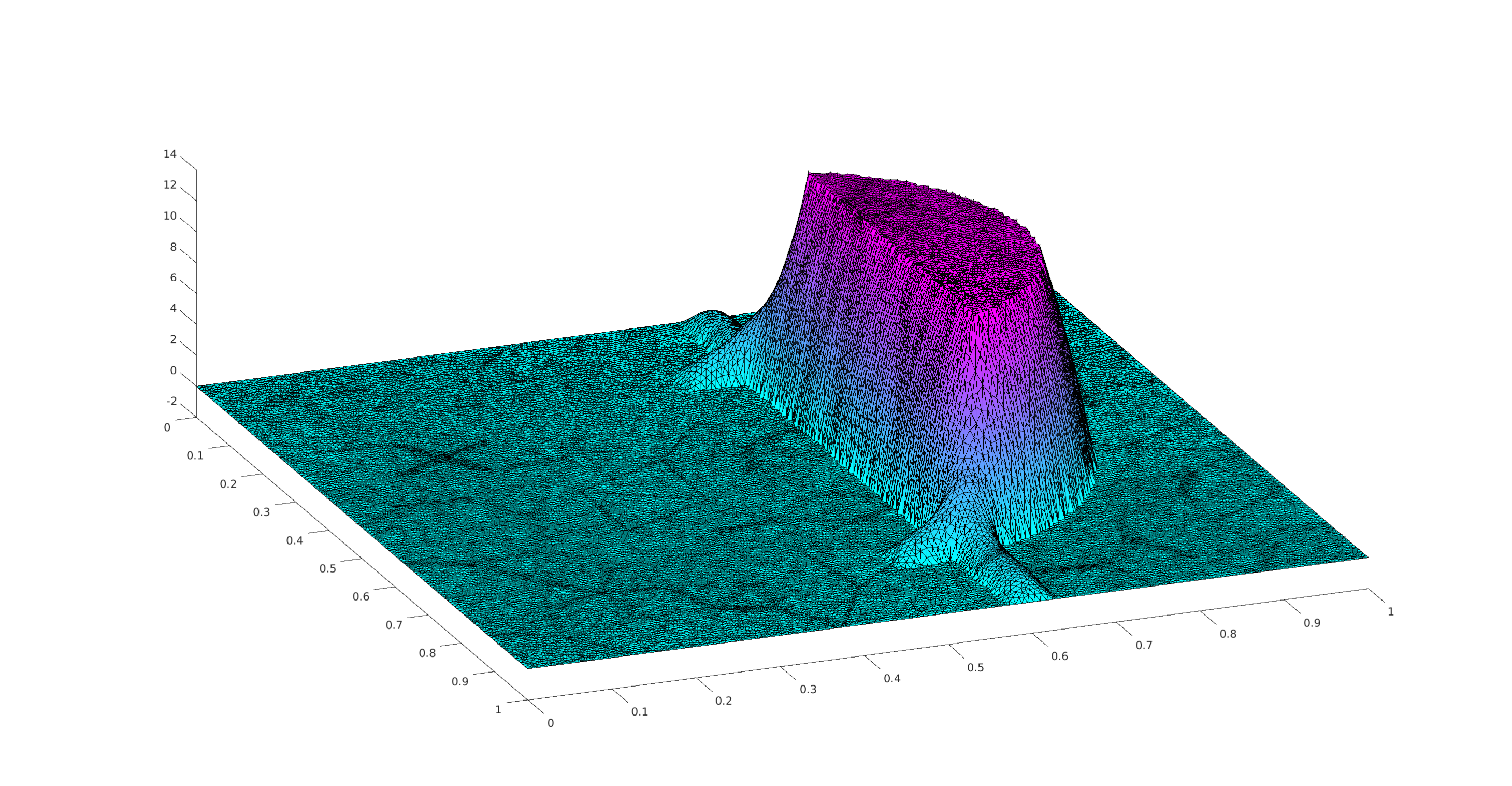, scale=0.19}
\caption{The left figure displays the coarse mesh and the level curve of the coefficient~\eqref{e:coeffchann}. Basically, the geometry is defined by a channel that bifurcates, forming an ``island.'' The coefficient is equal to $a^{-1}$ inside the channel and $a$ on the ``plateau'' and in the island. On the right we present the ``exact'' solution for $a=10^{7/2}$. The total contrast is $10^7$.}
\label{f:channelcoeff}
\end{figure}

We consider a sequence of problems defined  by increasing values of $a$. Note that the contrast $\kappa=a^2$. We test the performance of the LOD and LSD methods, presenting the results in Table~\ref{t:channel}.  We test the LOD method with $J=4$, and it is clear that the performance deteriorates as the contrast increases. For the LSD method, we set $J=2$ and present results for various values of $\astab$. We remark that that the full  system~\eqref{e:weak-hybridh} has $5248$ equations, while the LSD final systems range from 1151 ($\astab=1.3$) to 632 equations ($\astab=3.0$). 

\begin{table}
\centering \small
\begin{tabular}{|l|l|l|l|l|l|l|}\hline
\multicolumn{2}{|c|}{}&\multicolumn{5}{c|}{Relative energy error}
\\\hline
\multicolumn{2}{|c|}{}&LOD ($J=4$) &\multicolumn{4}{c|}{LSD ($J=2$)}
\\\hline
$a$& $\kappa$&& $\astab=1.3$&$\astab=1.4$&$\astab=1.5$ &$\astab=3.0$
\\\hline
$10^2$ & $10^4$ &$0.002$ &$0.002$  &$0.003$ &$0.003$ &$0.025$
\\\hline
$10^{5/2}$ &$10^5$ &$0.005$ &$0.004$ &$0.005$ &$0.006$ &$0.021$
\\\hline
$10^3$ &$10^6$ &$0.015$ &$0.002$ &$0.005$ &$0.005$ &$0.021$
\\\hline
$10^{7/2}$ &$10^7$ &$0.042$ &$0.003$ &$0.006$ &$0.006$ &$0.021$
\\\hline
\end{tabular}
\caption{Test~\ref{ss:hcchannels}: First and second columns give the value of $a$ and the contrast $\kappa$; third column lists relative energy error with LOD method with $J=4$; Fourth-seventh columns shows relative energy error with  LSD scheme with $J=2$ and increasing values of $\astab$.}
\label{t:channel}
\end{table}

\appendix
\section{Auxiliary results} \label{s:Auxiliaryresults}
\begin{lemma} \label{l:lrmsystem0}
Let  $\lambdarm_i$ defined as in~\eqref{e:lrmdef}. Then  $\{\lambdarm_i\}_{i=1}^N$ are linearly independent.   
\end{lemma}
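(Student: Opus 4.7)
The plan is to exploit the explicit representation of $\lambdarm_i$ provided in the Remark just after~\eqref{e:lrmdef}: each $\lambdarm_i$ is supported on the faces of $\partial\tau_i$, vanishes on every other face, and takes values $\pm 1$ on the faces of $\partial\tau_i$, the sign on a given face $F$ being determined by whether $\nn_F$ points outward or inward of $\tau_i$. With this concrete form, I would test the vanishing of an arbitrary combination $\sum_{i=1}^N c_i \lambdarm_i = 0$ face by face in $\Lh\subset \prod_{\tau}H^{-1/2}(\partial\tau)$.

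Concretely, fix a face $F$ of $\triH$. Only the coefficients $c_i$ with $F\subset\partial\tau_i$ contribute to the restriction of the combination to $F$. If $F\subset\dO$ and $F\subset\partial\tau_p$, only $i=p$ contributes; the restriction equals $\pm c_p$, forcing $c_p = 0$. If $F$ is interior and shared by two elements $\tau_p$ and $\tau_q$, only $\lambdarm_p$ and $\lambdarm_q$ contribute. A direct sign check using the Remark, together with the observation that $\nn_F$ is outward of $\tau_p$ if and only if it is inward of $\tau_q$, shows that on the $\partial\tau_p$ side both contributions carry the same sign, so the face equation collapses to a two-term linear identity relating $c_p$ and $c_q$ (and analogously on the $\partial\tau_q$ side).

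I would conclude via a connectedness argument on the dual graph of $\triH$, whose vertices are the elements of $\triH$ and whose edges join pairs of face-adjacent elements. Since $\OO$ is a connected domain with $\dO\neq\emptyset$ and the mesh is conforming, every vertex of this graph is joined by a path to at least one element having a face on $\dO$. The boundary-face equation zeroes out $c$ at the terminal element of such a path, and the interior-face two-term identities then propagate this zero along the path, forcing $c_i = 0$ for every $i$.

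The argument is essentially mechanical once the Remark is in hand; the only step requiring some care is the sign bookkeeping on the interior faces, which I do not expect to present any real difficulty.
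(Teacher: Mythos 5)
Your proof is correct and follows essentially the same route as the paper: extract from each boundary face the relation $c_p=0$, from each interior face a two-term relation ($c_p+c_q=0$, whatever the sign convention), and propagate zero through the face-adjacency (dual) graph using connectivity. The only cosmetic difference is that the paper obtains these face relations by pairing $\sum_i\beta_i\lambdarm_i$ with a test function $v\in\VH$ supported in a single element with unit mean on one face, whereas you read them off directly from the explicit $\pm1$ representation of $\lambdarm_i$ given in the Remark; the two mechanisms yield the identical system of equations.
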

\begin{proof}
  Assume that there exist constants $\beta_1,\dots,\beta_N$ such that $\sum_{i=1}^N\beta_i\lambdarm_i=0$. Consider $\tau_i$, $\tau_j$ two adjacent triangles sharing a face $F$ and let $v\in \VH$ with support in $\tau_i$
  such that $v|_{F'}=0$ if $F'\ne F$ and $\int_Fv=1$. Then
  \[
  0=\sum_{i=1}^N\beta_i(\lambdarm_i,v)_{\btriH}=\beta_i+\beta_j.
  \]
  If $F\subset\partial\Omega\cap\partial \tau_i$ then using the same
  arguments we have $\beta_i=0$. Then $\beta_i=0$ for all $i=1,\dots,N$ by using the connectivity of $\triH$.  
  \end{proof}
\begin{lemma}\label{l:lrmsystem}
  Let $\mu\in\Lh$. Then the problem of finding $\murm$ such that $(\murm,\vvrm)_{\btriH}=(\mu,\vvrm)_{\btriH}$ for all $\vvrm\in\VVRM$ is well-posed. 
\end{lemma}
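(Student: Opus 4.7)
Since $\dim\LRM=N$ by Lemma~\ref{l:lrmsystem0} and $\dim\VVRM=N$, the problem corresponds to a square linear system, and so well-posedness reduces to uniqueness. My plan is therefore to show that if $\murm\in\LRM$ satisfies $(\murm,\vvrm)_{\btriH}=0$ for every $\vvrm\in\VVRM$, then $\murm=0$.

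Testing against the characteristic functions $\vvrm=\chi_{\tau_j}$ reduces the hypothesis to $\int_{\partial\tau_j}\murm=0$ for each $j$. Writing $\murm=\sum_i\beta_i\lambdarm_i$ and invoking the explicit description from the Remark following~\eqref{e:lrmdef}, one reads off that $\murm|_F=\pm(\beta_p+\beta_q)$ on an interior face $F\subset\partial\tau_p\cap\partial\tau_q$, and $\murm|_F=\beta_p$ on a boundary face $F\subset\partial\tau_p\cap\partial\Omega$, the signs being fixed by the orientation of $\nn_F$. The system $\int_{\partial\tau_j}\murm=0$ then becomes an $N\times N$ linear system in the $\beta_i$ whose $j$-th row involves $\beta_j$ together with the $\beta$'s of neighbors, with signed coefficients built from face measures and the prescribed $\nn_F$-orientations; one can check directly that the row sums equal $|\partial\tau_j\cap\partial\Omega|$.

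To conclude that $\beta\equiv 0$ I would pass to the lowest-order Raviart--Thomas extension $\btau\in RT_0(\triH)$ of $\murm$, which is unique because $\murm$ is piecewise constant on the coarse faces; the conditions $\int_{\partial\tau_j}\murm=0$ together with elementwise constancy of $\div\btau$ on $RT_0$ force $\div\btau\equiv 0$, so $\btau$ is a divergence-free piecewise-constant vector field with continuous normal traces. Combining this with the algebraic structure imposed by $\murm\in\LRM$---in particular, the boundary faces yielding $\btau\cdot\nn=\beta_p$ on $\partial\tau_p\cap\partial\Omega$---and propagating through the mesh connectivity (as in the proof of Lemma~\ref{l:lrmsystem0}), one forces $\btau\equiv 0$, and hence $\murm=0$.

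The main obstacle will be this last step, where the divergence-free constraint on $\btau$ must be combined with the restrictive form of $\LRM$-traces to conclude $\btau=0$: this requires careful sign bookkeeping across oriented faces and an inductive argument, starting from boundary elements (where the single $\beta_p$ appears explicitly in the normal trace on $\partial\Omega$) and moving inward via mesh adjacency.
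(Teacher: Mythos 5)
Your opening reduction (a square $N\times N$ system, so well-posedness is equivalent to injectivity) is the same first step as the paper's, but the heart of the matter --- why zero data forces $\beta\equiv 0$ --- is exactly the part you do not prove, and the route you sketch cannot supply it as stated. The Raviart--Thomas detour is circular: for a normal trace that is constant on each coarse face, elementwise $\div\btau=0$ in the lowest-order Raviart--Thomas space is nothing more than the statement $\int_{\partial\tau_j}\murm\,ds=0$ for every $j$, i.e.\ it restates the hypothesis and adds no information. What is left is precisely the linear-algebra fact you defer to ``propagating through the mesh connectivity'': each equation couples $\beta_j$ to the coefficients of \emph{all} its neighbors, so starting at boundary elements and moving inward determines no $\beta$ at any stage --- the induction you describe has neither a base case nor an inductive step without some global structural ingredient (diagonal dominance, a maximum principle, or positive definiteness of the associated quadratic form). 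That ingredient is what the paper supplies: the matrix $(\lambdarm_i,\vvrm_j)_{\btriH}$ has diagonal entries of magnitude $|\partial\tau_i|$ and off-diagonal entries of magnitude $|\partial\tau_i\cap\partial\tau_j|$, hence is weakly diagonally dominant with strict dominance in rows of elements meeting $\partial\Omega$, and it is irreducible because $\triH$ is face-connected; nonsingularity then follows from the theorem on irreducibly diagonally dominant matrices. Nothing playing this role appears in your proposal, and you yourself flag the missing step as ``the main obstacle.''

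A secondary, but real, problem is the sign bookkeeping, where your two structural claims are mutually inconsistent. If the jump in \eqref{e:lrmdef} is understood relative to the outward normal of $\tau_i$ --- the convention under which the diagonal entries come out as $|\partial\tau_i|$ and the system is the graph-Laplacian-plus-boundary-mass system --- then the one-sided value of $\murm$ seen from $\tau_p$ on an interior face is $\beta_p-\beta_q$, not $\pm(\beta_p+\beta_q)$. If instead you keep the globally fixed $\nn_F$ and the values $\pm(\beta_p+\beta_q)$, then your assertion that the row sums equal $|\partial\tau_j\cap\partial\Omega|$ is false, because the interior-face contributions no longer cancel when all $\beta_i=1$, and the resulting matrix (with orientation-dependent signs on both diagonal and off-diagonal entries) is not one whose nonsingularity is evident. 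So even the system you intend to analyze is not pinned down correctly, and the concluding argument for $\beta\equiv0$ is missing; as it stands the proposal does not constitute a proof.
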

\begin{proof}
  Since we are dealing with finite dimensional spaces, it is enough to prove that the $N\times N$ matrix with components $(\lambdarm_i,\vvrm_j)_{\btriH}$ is non-singular, where $\vvrm_i$ is the characteristic function of the element $\tau_i$. Note that it follows from the definition of $\lambdarm_i$ that
\[
|(\lambdarm_i,\vvrm_j)_{\btriH}|=
\begin{cases}|\partial \tau_i|&\text{if }i=j,
  \\
|\partial \tau_i\cap\partial \tau_j|&\text{otherwise},
\end{cases}
\]
and then the matrix is diagonally dominant. Consider now an element $\tau_i$ such that $\partial \tau_i\cap\partial\Omega\ne\emptyset$. Then,
\[
|(\lambdarm_i,\vvrm_i)_{\btriH}|=|\partial \tau_i|>\sum_{j\ne i}|\partial \tau_i\cap\partial \tau_j|.
\]
We note that the matrix is irreducible since given any two distinct elements 
$\tau_i$ and $\tau_k$, there exists a path of adjacent
faces connecting  $\tau_i$ to  $\tau_k$. Then the matrix is irreducibly diagonally dominant, and from~\cite{MR2093409}*{Theorem 1.11}, it is non-singular.
\end{proof}

\begin{lemma}\label{l:extensions}
Let $\mu\in\Lambda_h$ and $\tau\in\triH$, assume that~\eqref{e:bounds} holds, and denote by $T_\I$ the harmonic extension defined by~\eqref{e:definitionT} replacing $\A$ by the identity operator. It follows then that 
  \[
  \frac{1}{a_{\min}^\tau}|T_\I\mu|_{H^1(\tau)}^2
  \ge|T\mu|^2_{H_\A^1(\tau)}
  \ge\frac{1}{a_{\max}^\tau}|T_\I\mu|_{H^1(\tau)}^2. 
  \]
\end{lemma}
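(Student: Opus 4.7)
The plan is to characterize both $|T\mu|_{H_\A^1(\tau)}^2$ and $|T_\I\mu|_{H^1(\tau)}^2$ as dual Rayleigh quotients over the same test space $\tildeVV$, and then transfer between the two through the pointwise bounds \eqref{e:bounds} on $\A$.

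First I would test the defining equation \eqref{e:definitionT} with $\tilde v = T\mu$ itself to obtain $|T\mu|_{H_\A^1(\tau)}^2 = (\mu, T\mu)_{\partial\tau}$. Combined with the Cauchy--Schwarz inequality applied to the bilinear form $(u,v) \mapsto \int_\tau \A\bgrad u\cdot\bgrad v\,d\xx$, this gives $(\mu,\tilde v)_{\partial\tau}^2 \le |T\mu|_{H_\A^1(\tau)}^2\,|\tilde v|_{H_\A^1(\tau)}^2$ for every $\tilde v \in \tildeVV$, with equality at $\tilde v = T\mu$. Consequently
\[
|T\mu|_{H_\A^1(\tau)}^2 = \sup_{\tilde v \in \tildeVV\setminus\{0\}} \frac{(\mu,\tilde v)_{\partial\tau}^2}{|\tilde v|_{H_\A^1(\tau)}^2},
\]
and exactly the same reasoning with $\A$ replaced by the identity produces the analogous representation of $|T_\I\mu|_{H^1(\tau)}^2$ as a supremum over the same set of $\tilde v$ but with $|\cdot|_{H^1(\tau)}^2$ in the denominator.

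Next I would invoke \eqref{e:bounds}, which pointwise yields $a_{\min}^\tau |\tilde v|_{H^1(\tau)}^2 \le |\tilde v|_{H_\A^1(\tau)}^2 \le a_{\max}^\tau |\tilde v|_{H^1(\tau)}^2$ for every $\tilde v \in \tildeVV$. Substituting these two-sided bounds into the denominator of the $H_\A^1$ Rayleigh quotient and then taking the supremum over $\tilde v$ gives
\[
\frac{1}{a_{\max}^\tau}\,|T_\I\mu|_{H^1(\tau)}^2 \;\le\; |T\mu|_{H_\A^1(\tau)}^2 \;\le\; \frac{1}{a_{\min}^\tau}\,|T_\I\mu|_{H^1(\tau)}^2,
\]
which is precisely the claim.

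There is essentially no obstacle; the only point worth verifying is that both suprema are taken over the same class of test functions, and this is automatic from \eqref{e:definitionT} since both $T\mu$ and $T_\I\mu$ are defined by testing against $\tildeVV$. The local $\rho$-weighted Poincaré inequality on $\tildeVV$ guarantees that $|\cdot|_{H^1(\tau)}$ and $|\cdot|_{H_\A^1(\tau)}$ are genuine norms on this space, so the Rayleigh quotient identities are well-defined and attained.
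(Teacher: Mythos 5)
Your proof is correct. It differs from the paper's argument only in which variational characterization of the extension energy it uses: you characterize $(\mu,T\mu)_{\partial\tau}=|T\mu|_{H_\A^1(\tau)}^2$ as the dual Rayleigh quotient $\sup_{\tilde v}(\mu,\tilde v)_{\partial\tau}^2/|\tilde v|_{H_\A^1(\tau)}^2$ over $\widetilde H^1(\tau)$ (via Cauchy--Schwarz, with equality at $\tilde v=T\mu$), whereas the paper uses the primal characterization $-\tfrac12(\mu,T\mu)_{\partial\tau}=\inf_{v\in\widetilde H^1(\tau)}\tfrac12(\A\bgrad v,\bgrad v)_\tau-(\mu,v)_{\partial\tau}$ together with a rescaling identity ($v\mapsto\lambda^{-1}v$) to convert the constant-coefficient infimum into $\tfrac1\lambda$ times the identity-coefficient one. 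The two characterizations are Legendre-dual to each other, and both proofs then hinge on the same ingredient, namely the pointwise bounds $\amin^\tau|\bgrad v|^2\le\A\bgrad v\cdot\bgrad v\le\amax^\tau|\bgrad v|^2$ on $\tau$ coming from~\eqref{e:bounds}. What your formulation buys is that the comparison happens only in the denominator of a quotient, so no rescaling step is needed and the two-sided bound drops out in one line; the paper's inf-formulation is the more standard energy-minimization route and transfers verbatim to discretized local spaces. One tiny point worth a sentence in a polished write-up: when $T\mu=0$ the supremum is not attained, but then $(\mu,\tilde v)_{\partial\tau}=0$ for all $\tilde v\in\widetilde H^1(\tau)$, so $T_\I\mu=0$ as well and the inequalities hold trivially; also, the quotients are well-defined because $|\cdot|_{H^1(\tau)}$ is a norm on the zero-$\rho$-average space (no weighted Poincar\'e inequality is really needed for that, only that gradient-free functions in this space vanish).
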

\begin{proof}
  First note that, for any non-vanishing $\lambda$,
  \begin{multline*}
    \inf_{v\in\widetilde H^1(\tau)}\frac12(\lambda\bgrad v,\bgrad v)_\tau-(\mu,v)_{\partial\tau}
    =\inf_{v \in\widetilde H^1(\tau)}\frac12(\lambda\bgrad\lambda^{-1}v,\bgrad\lambda^{-1}v)_\tau
    -(\mu,\lambda^{-1}v)_{\partial\tau}
  \\
  =\frac1\lambda\inf_{v\in\widetilde H^1(\tau)}\frac12(\bgrad v,\bgrad v)_\tau
  -(\mu,v)_{\partial \tau}. 
  \end{multline*}
  Then 
  \begin{multline*}
  -\frac12(\mu,T\mu)_{\partial\tau}
  =\inf_{v \in\widetilde H^1(\tau)}\frac12(\A\bgrad v,\bgrad v)_\tau-(\mu,v)_{\partial \tau}
  \le\inf_{v \in\widetilde H^1(\tau)}a_{\max}^\tau\frac12(\bgrad v,\bgrad v)_\tau
  -(\mu,v)_{\partial \tau}
  \\
  =-\frac1{2 a_{\max}^\tau}(\mu,T_\I\mu)_{\partial\tau}. 
  \end{multline*}
  Similarly, 
  \begin{equation*}
  -\frac12(\mu,T\mu)_{\partial\tau}
  \ge\inf_{v\in\widetilde H^1(\tau)}a_{\min}^\tau\frac12(\bgrad v,\bgrad v)_\tau-(\mu,v)_{\partial\tau}
  =-\frac1{2a_{\min}^\tau}(\mu,T_\I\mu)_{\partial\tau}. 
  \end{equation*}
\end{proof}
\section{Notations} \label{s:Notations}
Inner products, dualities and norms:
\begin{itemize}
\item $(\cdot,\cdot)_{\triH}$, $(\cdot,\cdot)_{\btriH}$, $(\cdot,\cdot)_{\partial\tau}$: inner and duality products; Section~\ref{ss:dhc}
\item $\|\cdot\|_{\HdivA}$, $\|\cdot\|_{H_\A^1(\triH)}$, $\|\cdot\|_{H_\A^{-1/2}(\triH)}$: equation~\eqref{e:norms}
\end{itemize}

Functional spaces:
\begin{itemize}
\item $\VH$ and $\LH$: piecewise $H^1$ functions, and trace of $\Hdiv$ functions; equation~\eqref{e:spaces}
\item $\VVRM$, $\tildeVV$, $\widetilde H^1(\tau)$: piecewise constants and zero average functions; equation~\eqref{e:poHtdef}
\item $\mathcal F_J$: eigenspace generating a finite dimension space for the right-hand side; equation~\eqref{e:Fjdef}
\item $\Lh$: piecewise constants on the refined skeleton; equation~\eqref{e:lambdaconst}
\item $\LRM$: constants flux traces on element boundaries; equation~\eqref{e:lammbdadecomp}
\item $\tildeLambda_h=\VVRM^\perp$: zero average flux traces on element boundaries; equation~\eqref{e:lammbdadecomp}
\item $\LTRM$: constants on faces with zero average on element boundaries; equation~\eqref{e:lambdafaces}
\item $\tildeLambda_h^f$: zero average flux traces on faces; equation~\eqref{e:lambdafaces}
\item$\tilde\Lambda_h^{f,\tau,j}\subset\tilde\Lambda^f_h$: equation~\eqref{e:operPkfdef} 
\item $\tilde\Lambda_h^\tau$: local space; equation~\eqref{e:hcspaces}
\item $\tilde\Lambda_h^F$, $\tilde\Lambda_h^{F_\tau^c}$: restriction of $\tilde\Lambda_h^f$ to $F$, $F_\tau^c$; equation~\eqref{e:hcspaces}
\item $\tilde\Lambda_h^{F,\triangle}$, $\tilde\Lambda_h^{F,\Pi}$, $\tilde\Lambda_h^\Pi$, $\tilde\Lambda_h^\triangle$: spectral spaces; equations~\eqref{e:Lpmdef},~\eqref{e:ltpmdef}
\item $\widetilde\Lambda_h^{0,\Pi}:=\LTRM\oplus\tilde\Lambda_h^\Pi$: replaces $\LTRM$ in the high-contrast case; equation~\eqref{e:Ltopidef}
\end{itemize}

Operators
\begin{itemize}
\item $P:\VH\to\tilde\Lambda_h^f$: non-local operator; equation~\eqref{e:Pdef}
\item $P^{K,j}$, $\tilde{P}^j$, $P^{F,j}$, $P^j$: equations~\eqref{e:operPkfdef}, \eqref{e:PKjdef}, \eqref{e:PFjdef}, \eqref{e:Pjdef}
\item $P^\triangle$, $P^{\triangle,K,j}$, $\tilde{P}^{\triangle,j}$, $P^{\triangle,F,j}$, $P^{\triangle,j}$: equations~\eqref{e:Pdefhigh}, \eqref{e:Ptriagkj}, \eqref{e:Pjdefminus}, \eqref{e:PFjdefhc}, \eqref{e:Pjdefhc}
\item $\Pi_J$: $L^2(\Omega)$ orthogonal projection on $\mathcal F_{J}$: Section~\ref{ss:findim}
\item $T$, $\tilde T$, $T_{FF}^\tau$, $T_{FF^c}^\tau$, $T_{F^cF}^\tau$, $T_{F^cF^c}^\tau$, $\widehat T_{FF}^\tau$: local operators; Sections~\ref{Hybrid} and~\ref{ss:dhc} 
\end{itemize}

Unknowns:
\begin{itemize}
\item $\lambda$: trace of the elementwise fluxes; equation~\eqref{e:weak-hybridH}
\item $\lambda_h$: traces of ``surrogate'' flux; equation~\eqref{e:weak-hybridh}
\item $\lambdarm$, $\tlambdarm_h$, $\tilde\lambda_h^f$: decompose $\lambda_h$; equation~\eqref{e:mc3}
\item $\lambda_h^j$, $\tlambdarmj_h$, $\tilde\lambda_h^{f,j}$: solutions using the $P^j$ operator; equations~\eqref{e:uhjdef} and~\eqref{e:lambdahfj}
\item $\tlambda_h^{0,\Pi}$, $\tilde\lambda_h^\triangle$: components of the solution using the $P^\triangle$ operator; equations~\eqref{e:Pdefhigh},~\eqref{e:alternative}
\item $\tlambda_h^{0,\Pi,j}$, $\tilde\lambda_h^{\triangle,j}$: components of the solution using the $P^{\triangle,j}$ operator; equation~\eqref{e:uhjhighdef}
\item $\bsigma$: flux; equation~\eqref{e:disp-stress}
\item $\bsigma_h$: ``surrogate'' flux; equation~\eqref{e:disp-stressh}
\item $\bsigma_h^{\text{LSD},j}$: LSD flux; Remark~\ref{r:equil}
\item $u$: solution of the original problem; equation~\eqref{e:elliptic}
\item $\uurm$, $\tilde u$: average and zero average components of $u$; equations~\eqref{e:weak-e1} and~\eqref{e:weak-e3}
\item $u_h$: solution of the ``surrogate'' discrete problem; equation~\eqref{e:disp-stressh}
\item $\uurm_h$: average of $u_h$; equation~\eqref{e:weak-hybridh}
\item $u_h^j$, $u_h^{0,j}$: solutions using the $P^j$ operator; equation~\eqref{e:uhjdef}
\item $u_h^{\text{LSD},j}$, $u^{0,\text{LSD},j}_h$: solutions using the $P^{\triangle,j}$ operator; equation~\eqref{e:uhjhighdef}
  \item $v_1,\,v_2,\,v_3,\,\dots$: eigenfunctions generating a finite dimension space for the right-hand side; equation~\eqref{e:eigenproblem}
\end{itemize}

Other notations:
\begin{itemize}
\item $\amin$, $\amax$, $a_-$, $a_+$, $\amin^\tau$, $\amax^\tau$: bounds for the eigenvalues of $\A$; equation~\eqref{e:bounds} and Lemma~\ref{l:eigenbound}
\item $\A$: symmetric coefficients tensor; equation~\eqref{e:elliptic}
\item $\alpha_{\rm{stab}}$: controls the decay rate of the solutions to the non-local problems; equation~\eqref{e:Lpmdef}
\item $c_P$, $C_{P,G}$: local and global weighted Poincar\'e inequality constants; Remark~\ref{r:sharpesthc}
\item $\beta_{H/h}=1+\log(H/h)$: Lemma~\ref{l:eigenbound}
\item $d$: dimension; equation~\eqref{e:elliptic}
\item $\faceh$: partition of the faces of elements in $\triH$; Section~\ref{s:Hlocal}
\item $F_\tau^c=\partial\tau\backslash F$: $\partial\tau$ except the face $F$; above equation~\eqref{e:hcspaces}
\item $g$: right-hand side of the original problem; equation~\eqref{e:elliptic}
\item $\gamma$: constant depending on shape regularity of $\triH$; Lemma~\ref{l:eigenbound}
\item $H$, $h$: coarse and fine mesh characteristic lengths
\item $\HH$:  the method's ``target precision''; equation~\eqref{e:HHdef}
\item $\kappa=\max_{\tau\in\triH}\kappa^\tau$, $\kappa^\tau=\amax^\tau/\amin^\tau$: Lemma~\ref{l:eigenbound}
\item $\nn^\tau$: unit size normal vector pointing outward $\tau$; Section~\ref{Hybrid}
\item $\Omega$, $\dO$: bi- or tri-dimensional domain and its boundary; Section~\ref{s:intro}
\item $\tau$, $\partial\tau$: typical element in $\triH$ and its boundary
\item $\triH$: partition of $\Omega$; Section~\ref{Hybrid}
\item $\T_j(K)$: equation~\eqref{e:taujk}
\end{itemize}


\begin{bibdiv}
\begin{biblist}

\bib{MR242573}{article}{
   author={Anderson, W. N., Jr.},
   author={Duffin, R. J.},
   title={Series and parallel addition of matrices},
   journal={J. Math. Anal. Appl.},
   volume={26},
   date={1969},
   pages={576--594},
   issn={0022-247X},
   review={\MR{242573}},
   doi={10.1016/0022-247X(69)90200-5},
}

\bib{AHPV}{article}{
   author={Araya, Rodolfo},
   author={Harder, Christopher},
   author={Paredes, Diego},
   author={Valentin, Fr{\'e}d{\'e}ric},
   title={Multiscale hybrid-mixed method},
   journal={SIAM J. Numer. Anal.},
   volume={51},
   date={2013},
   number={6},
   pages={3505--3531},
   issn={0036-1429},
   review={\MR{3143841}},
   doi={10.1137/120888223},
}

\bib{MR3050916}{article}{
   author={Arbogast, Todd},
   title={Mixed multiscale methods for heterogeneous elliptic problems},
   conference={
      title={Numerical analysis of multiscale problems},
   },
   book={
      series={Lect. Notes Comput. Sci. Eng.},
      volume={83},
      publisher={Springer, Heidelberg},
   },
   date={2012},
   pages={243--283},
   review={\MR{3050916}},
   doi={10.1007/978-3-642-22061-6\_8},
}

\bib{MR2306414}{article}{
   author={Arbogast, Todd},
   author={Pencheva, Gergina},
   author={Wheeler, Mary F.},
   author={Yotov, Ivan},
   title={A multiscale mortar mixed finite element method},
   journal={Multiscale Model. Simul.},
   volume={6},
   date={2007},
   number={1},
   pages={319--346},
   issn={1540-3459},
   review={\MR{2306414}},
   doi={10.1137/060662587},
}

\bib{MR2801210}{article}{
   author={Babuska, Ivo},
   author={Lipton, Robert},
   title={Optimal local approximation spaces for generalized finite element
   methods with application to multiscale problems},
   journal={Multiscale Model. Simul.},
   volume={9},
   date={2011},
   number={1},
   pages={373--406},
   issn={1540-3459},
   review={\MR{2801210}},
   doi={10.1137/100791051},
}

\bib{MR1286212}{article}{
   author={Babu{\v{s}}ka, Ivo},
   author={Caloz, Gabriel},
   author={Osborn, John E.},
   title={Special finite element methods for a class of second order
   elliptic problems with rough coefficients},
   journal={SIAM J. Numer. Anal.},
   volume={31},
   date={1994},
   number={4},
   pages={945--981},
   issn={0036-1429},
   review={\MR{1286212 (95g:65146)}},
}

\bib{babuos85}{article}{
   author={Babu{\v{s}}ka, I.},
   author={Osborn, J. E.},
   title={Finite element methods for the solution of problems with rough
   input data},
   conference={
      title={},
      address={Oberwolfach},
      date={1983},
   },
   book={
      series={Lecture Notes in Math.},
      volume={1121},
      publisher={Springer},
      place={Berlin},
   },
   date={1985},
   pages={1--18},
   review={\MR{806382 (86m:65138)}},
}

\bib{MR3612901}{article}{
   author={Beir\~ao da Veiga, L.},
   author={Pavarino, L. F.},
   author={Scacchi, S.},
   author={Widlund, O. B.},
   author={Zampini, S.},
   title={Adaptive selection of primal constraints for isogeometric BDDC
   deluxe preconditioners},
   journal={SIAM J. Sci. Comput.},
   volume={39},
   date={2017},
   number={1},
   pages={A281--A302},
   issn={1064-8275},
   review={\MR{3612901}},
   doi={10.1137/15M1054675},
}

\bib{MR1974504}{article}{
   author={Brenner, Susanne C.},
   title={Poincar\'e-Friedrichs inequalities for piecewise $H^1$ functions},
   journal={SIAM J. Numer. Anal.},
   volume={41},
   date={2003},
   number={1},
   pages={306--324},
   issn={0036-1429},
   review={\MR{1974504}},
   doi={10.1137/S0036142902401311},
}

\bib{MR2047078}{article}{
   author={Brenner, Susanne C.},
   title={Korn's inequalities for piecewise $H^1$ vector fields},
   journal={Math. Comp.},
   volume={73},
   date={2004},
   number={247},
   pages={1067--1087},
   issn={0025-5718},
   review={\MR{2047078}},
   doi={10.1090/S0025-5718-03-01579-5},
}

\bib{MR2759829}{book}{
   author={Brezis, Haim},
   title={Functional analysis, Sobolev spaces and partial differential
   equations},
   series={Universitext},
   publisher={Springer, New York},
   date={2011},
   pages={xiv+599},
   isbn={978-0-387-70913-0},
   review={\MR{2759829 (2012a:35002)}},
}

\bib{MR3800035}{article}{
   author={Burman, Erik},
   author={Guzm\'{a}n, Johnny},
   author={S\'{a}nchez, Manuel A.},
   author={Sarkis, Marcus},
   title={Robust flux error estimation of an unfitted Nitsche method for
   high-contrast interface problems},
   journal={IMA J. Numer. Anal.},
   volume={38},
   date={2018},
   number={2},
   pages={646--668},
   issn={0272-4979},
   review={\MR{3800035}},
   doi={10.1093/imanum/drx017},
}

\bib{MR3582898}{article}{
   author={Calvo, Juan G.},
   author={Widlund, Olof B.},
   title={An adaptive choice of primal constraints for BDDC domain
   decomposition algorithms},
   journal={Electron. Trans. Numer. Anal.},
   volume={45},
   date={2016},
   pages={524--544},
   issn={1068-9613},
   review={\MR{3582898}},
}

\bib{MR2684351}{article}{
   author={Chu, C.-C.},
   author={Graham, I. G.},
   author={Hou, T.-Y.},
   title={A new multiscale finite element method for high-contrast elliptic
   interface problems},
   journal={Math. Comp.},
   volume={79},
   date={2010},
   number={272},
   pages={1915--1955},
   issn={0025-5718},
   review={\MR{2684351}},
   doi={10.1090/S0025-5718-2010-02372-5},
}

\bib{CHUNG2018298}{article}{
  author={Chung, E. T.},
  author={Efendiev, Y.},
  author={Leung, W. T.},
  title = {Constraint Energy Minimizing Generalized Multiscale Finite Element Method},
  journal = {Computer Methods in Applied Mechanics and Engineering},
  volume = {339},
  date ={2018},
pages = {298--319},
issn = {0045-7825},
doi = {10.1016/j.cma.2018.04.010},
}

\bib{DP2013}{article}{
  author={Dohrmann, C.},
  author={Pechstein, C.},
  title={Modern decomposition solvers - BDDC, deluxe scaling, and an algebraic approach, slides to a talk at NuMa Seminar, JKU Linz, Linz, Austria, December 10, 2013, http://people.ricam.oeaw.ac.at/c.pechstein/pechstein-bddc2013.pdf},
}  

\bib{MR3033238}{article}{
   author={Dolean, Victorita},
   author={Nataf, Fr\'ed\'eric},
   author={Scheichl, Robert},
   author={Spillane, Nicole},
   title={Analysis of a two-level Schwarz method with coarse spaces based on
   local Dirichlet-to-Neumann maps},
   journal={Comput. Methods Appl. Math.},
   volume={12},
   date={2012},
   number={4},
   pages={391--414},
   issn={1609-4840},
   review={\MR{3033238}},
   doi={10.2478/cmam-2012-0027},
}

\bib{MR2282408}{article}{
   author={Dost{\'a}l, Zden{\v{e}}k},
   author={Hor{\'a}k, David},
   author={Ku{\v{c}}era, Radek},
   title={Total FETI---an easier implementable variant of the FETI method
   for numerical solution of elliptic PDE},
   journal={Comm. Numer. Methods Engrg.},
   volume={22},
   date={2006},
   number={12},
   pages={1155--1162},
   issn={1069-8299},
   review={\MR{2282408 (2007k:65177)}},
   doi={10.1002/cnm.881},
}

\bib{MR1367653}{article}{
   author={Dryja, Maksymilian},
   author={Sarkis, Marcus V.},
   author={Widlund, Olof B.},
   title={Multilevel Schwarz methods for elliptic problems with
   discontinuous coefficients in three dimensions},
   journal={Numer. Math.},
   volume={72},
   date={1996},
   number={3},
   pages={313--348},
   issn={0029-599X},
   review={\MR{1367653}},
   doi={10.1007/s002110050172},
}

\bib{MR2867661}{article}{
   author={Dryja, Maksymilian},
   author={Sarkis, Marcus},
   title={Technical tools for boundary layers and applications to
   heterogeneous coefficients},
   conference={
      title={Domain decomposition methods in science and engineering XIX},
   },
   book={
      series={Lect. Notes Comput. Sci. Eng.},
      volume={78},
      publisher={Springer, Heidelberg},
   },
   date={2011},
   pages={205--212},
   review={\MR{2867661}},
   doi={10.1007/978-3-642-11304-8\_22},
}

\bib{MR1979846}{article}{
    author={E, Weinan},
    author={Engquist, Bjorn},
     title={The heterogeneous multiscale methods},
   journal={Commun. Math. Sci.},
    volume={1},
      date={2003},
    number={1},
     pages={87\ndash 132},
      issn={1539-6746},
    review={MR1979846 (2004b:35019)},
}

\bib{MR2058933}{article}{
    author={E, Weinan},
    author={Ming, Pingbing},
     title={Analysis of multiscale methods},
   journal={J. Comput. Math.},
    volume={22},
      date={2004},
    number={2},
     pages={210\ndash 219},
      issn={0254-9409},
    review={MR2058933 (2005d:65188)},
}

\bib{MR3109775}{article}{
   author={Efendiev, Y.},
   author={Galvis, J.},
   author={Lazarov, R.},
   author={Moon, M.},
   author={Sarkis, M.},
   title={Generalized multiscale finite element method. Symmetric interior
   penalty coupling},
   journal={J. Comput. Phys.},
   volume={255},
   date={2013},
   pages={1--15},
   issn={0021-9991},
   review={\MR{3109775}},
   doi={10.1016/j.jcp.2013.07.028},
}

\bib{MR2753343}{article}{
   author={Efendiev, Yalchin},
   author={Galvis, Juan},
   author={Wu, Xiao-Hui},
   title={Multiscale finite element methods for high-contrast problems using
   local spectral basis functions},
   journal={J. Comput. Phys.},
   volume={230},
   date={2011},
   number={4},
   pages={937--955},
   issn={0021-9991},
   review={\MR{2753343}},
   doi={10.1016/j.jcp.2010.09.026},
}

\bib{MR2916377}{article}{
   author={Efendiev, Yalchin},
   author={Galvis, Juan},
   author={Lazarov, Raytcho},
   author={Willems, Joerg},
   title={Robust domain decomposition preconditioners for abstract symmetric
   positive definite bilinear forms},
   journal={ESAIM Math. Model. Numer. Anal.},
   volume={46},
   date={2012},
   number={5},
   pages={1175--1199},
   issn={0764-583X},
   review={\MR{2916377}},
   doi={10.1051/m2an/2011073},
}

\bib{FETI}{article}{
    author = {Farhat, Charbel},
    author = {Roux, Francois-Xavier},
    doi = {10.1002/nme.1620320604},
    issn = {1097-0207},
    journal = {Int. J. Numer. Meth. Engng.},
    number = {6},
    pages = {1205--1227},
    publisher = {John Wiley \& Sons, Ltd},
    title = {{A method of finite element tearing and interconnecting and its parallel solution algorithm}},
    url = {http://dx.doi.org/10.1002/nme.1620320604},
    volume = {32},
    year = {1991}
}

\bib{MR1802366}{article}{
   author={Farhat, Charbel},
   author={Lesoinne, Michael},
   author={Pierson, Kendall},
   title={A scalable dual-primal domain decomposition method},
   note={Preconditioning techniques for large sparse matrix problems in
   industrial applications (Minneapolis, MN, 1999)},
   journal={Numer. Linear Algebra Appl.},
   volume={7},
   date={2000},
   number={7-8},
   pages={687--714},
   issn={1070-5325},
   review={\MR{1802366}},
   doi={10.1002/1099-1506(200010/12)7:7/8<687::AID-NLA219>3.0.CO;2-S},
}

\bib{MR1285024}{article}{
   author={Farhat, Charbel},
   author={Mandel, Jan},
   author={Roux, Fran\c{c}ois-Xavier},
   title={Optimal convergence properties of the FETI domain decomposition
   method},
   journal={Comput. Methods Appl. Mech. Engrg.},
   volume={115},
   date={1994},
   number={3-4},
   pages={365--385},
   issn={0045-7825},
   review={\MR{1285024}},
   doi={10.1016/0045-7825(94)90068-X},
}

\bib{MR2718268}{article}{
   author={Galvis, Juan},
   author={Efendiev, Yalchin},
   title={Domain decomposition preconditioners for multiscale flows in
   high-contrast media},
   journal={Multiscale Model. Simul.},
   volume={8},
   date={2010},
   number={4},
   pages={1461--1483},
   issn={1540-3459},
   review={\MR{2718268}},
   doi={10.1137/090751190},
}

\bib{MR2317926}{article}{
   author={Graham, I. G.},
   author={Lechner, P. O.},
   author={Scheichl, R.},
   title={Domain decomposition for multiscale PDEs},
   journal={Numer. Math.},
   volume={106},
   date={2007},
   number={4},
   pages={589--626},
   issn={0029-599X},
   review={\MR{2317926}},
   doi={10.1007/s00211-007-0074-1},
}

\bib{MR3704855}{article}{
   author={Guzm\'{a}n, Johnny},
   author={S\'{a}nchez, Manuel A.},
   author={Sarkis, Marcus},
   title={A finite element method for high-contrast interface problems with
   error estimates independent of contrast},
   journal={J. Sci. Comput.},
   volume={73},
   date={2017},
   number={1},
   pages={330--365},
   issn={0885-7474},
   review={\MR{3704855}},
   doi={10.1007/s10915-017-0415-x},
}

\bib{HMV}{article}{
   author={Harder, Christopher},
   author={Madureira, Alexandre L.},
   author={Valentin, Fr\'{e}d\'{e}ric},
   title={A hybrid-mixed method for elasticity},
   journal={ESAIM Math. Model. Numer. Anal.},
   volume={50},
   date={2016},
   number={2},
   pages={311--336},
   issn={0764-583X},
   review={\MR{3482545}},
   doi={10.1051/m2an/2015046},
}

\bib{HPV}{article}{
   author={Harder, Christopher},
   author={Paredes, Diego},   
   author={Valentin, Fr{\'e}d{\'e}ric},
   title={A family of multiscale hybrid--mixed finite element methods for the Darcy equation with rough coefficients},
   journal={J. Comput. Phys.},
   volume={245},
   pages={107--130},
   date={2013},
}

\bib{MR3350765}{article}{
   author={Heinlein, A.},
   author={Hetmaniuk, U.},
   author={Klawonn, A.},
   author={Rheinbach, O.},
   title={The approximate component mode synthesis special finite element
   method in two dimensions: parallel implementation and numerical results},
   journal={J. Comput. Appl. Math.},
   volume={289},
   date={2015},
   pages={116--133},
   issn={0377-0427},
   review={\MR{3350765}},
   doi={10.1016/j.cam.2015.02.053},
}

\bib{MR2666649}{article}{
   author={Hetmaniuk, Ulrich L.},
   author={Lehoucq, Richard B.},
   title={A special finite element method based on component mode synthesis},
   journal={M2AN Math. Model. Numer. Anal.},
   volume={44},
   date={2010},
   number={3},
   pages={401--420},
   issn={0764-583X},
   review={\MR{2666649}},
   doi={10.1051/m2an/2010007},
}

\bib{MR3225627}{article}{
   author={Hetmaniuk, Ulrich},
   author={Klawonn, Axel},
   title={Error estimates for a two-dimensional special finite element
   method based on component mode synthesis},
   journal={Electron. Trans. Numer. Anal.},
   volume={41},
   date={2014},
   pages={109--132},
   issn={1068-9613},
   review={\MR{3225627}},
}

\bib{MR1642758}{article}{
    author={Hou, Thomas Y.},
    author={Wu, Xiao-Hui},
    author={Cai, Zhiqiang},
     title={Convergence of a multiscale finite element method for elliptic
            problems with rapidly oscillating coefficients},
   journal={Math. Comp.},
    volume={68},
      date={1999},
    number={227},
     pages={913\ndash 943},
      issn={0025-5718},
    review={MR1642758 (99i:65126)},
}

\bib{MR1660141}{article}{
   author={Hughes, Thomas J. R.},
   author={Feij{\'o}o, Gonzalo R.},
   author={Mazzei, Luca},
   author={Quincy, Jean-Baptiste},
   title={The variational multiscale method---a paradigm for computational
   mechanics},
   journal={Comput. Methods Appl. Mech. Engrg.},
   volume={166},
   date={1998},
   number={1-2},
   pages={3--24},
   issn={0045-7825},
   review={\MR{1660141 (99m:65239)}},
   doi={10.1016/S0045-7825(98)00079-6},
}

\bib{MR2300286}{article}{
   author={Hughes, T. J. R.},
   author={Sangalli, G.},
   title={Variational multiscale analysis: the fine-scale Green's function,
   projection, optimization, localization, and stabilized methods},
   journal={SIAM J. Numer. Anal.},
   volume={45},
   date={2007},
   number={2},
   pages={539--557},
   issn={0036-1429},
   review={\MR{2300286 (2008c:65332)}},
   doi={10.1137/050645646},
}

\bib{MR3350292}{article}{
   author={Kim, Hyea Hyun},
   author={Chung, Eric T.},
   title={A BDDC algorithm with enriched coarse spaces for two-dimensional
   elliptic problems with oscillatory and high contrast coefficients},
   journal={Multiscale Model. Simul.},
   volume={13},
   date={2015},
   number={2},
   pages={571--593},
   issn={1540-3459},
   review={\MR{3350292}},
   doi={10.1137/140970598},
}

\bib{MR3303686}{article}{
   author={Klawonn, A.},
   author={Radtke, P.},
   author={Rheinbach, O.},
   title={FETI-DP methods with an adaptive coarse space},
   journal={SIAM J. Numer. Anal.},
   volume={53},
   date={2015},
   number={1},
   pages={297--320},
   issn={0036-1429},
   review={\MR{3303686}},
   doi={10.1137/130939675},
}

\bib{MR3546980}{article}{
   author={Klawonn, Axel},
   author={K\"uhn, Martin},
   author={Rheinbach, Oliver},
   title={Adaptive coarse spaces for FETI-DP in three dimensions},
   journal={SIAM J. Sci. Comput.},
   volume={38},
   date={2016},
   number={5},
   pages={A2880--A2911},
   issn={1064-8275},
   review={\MR{3546980}},
   doi={10.1137/15M1049610},
}

\bib{MR1921914}{article}{
   author={Klawonn, Axel},
   author={Widlund, Olof B.},
   author={Dryja, Maksymilian},
   title={Dual-primal FETI methods for three-dimensional elliptic problems
   with heterogeneous coefficients},
   journal={SIAM J. Numer. Anal.},
   volume={40},
   date={2002},
   number={1},
   pages={159--179},
   issn={0036-1429},
   review={\MR{1921914}},
   doi={10.1137/S0036142901388081},
}

\bib{totalfeti}{article}{
     author={Kozubek, Tom\'{a}\v{s}},
     author={Vondr\'{a}k, V\'{\i}t},
     author={Men\v{s}\'{\i}k, Martin},
     author={Hor\'{a}k, David},
     author={Dost\'{a}l, Zden\v{e}k},
     author={Hapla, V\'{a}clav},
     author = {Kabel\'{\i}kov\'{a}, Pavla},
     author={\v{C}erm\'{a}k, Martin},
    doi = {10.1016/j.advengsoft.2013.04.001},
    issn = {09659978},
    journal = {Advances in Engineering Software},
    pages = {14--22},
    title = {{Total FETI domain decomposition method and its massively parallel
    implementation}},
    url = {http://dx.doi.org/10.1016/j.advengsoft.2013.04.001},
    volume = {60-61},
    year = {2013},
}

\bib{MR2831590}{article}{
   author={M{\aa}lqvist, Axel},
   title={Multiscale methods for elliptic problems},
   journal={Multiscale Model. Simul.},
   volume={9},
   date={2011},
   number={3},
   pages={1064--1086},
   issn={1540-3459},
   review={\MR{2831590 (2012j:65419)}},
   doi={10.1137/090775592},
}

\bib{MR3591945}{article}{
   author={M\aa lqvist, Axel},
   author={Henning, Patrick},
   author={Hellman, Fredrik},
   title={Multiscale mixed finite elements},
   journal={Discrete Contin. Dyn. Syst. Ser. S},
   volume={9},
   date={2016},
   number={5},
   pages={1269--1298},
   issn={1937-1632},
   review={\MR{3591945}},
   doi={10.3934/dcdss.2016051},
}

\bib{MR3246801}{article}{
   author={M{\aa}lqvist, Axel},
   author={Peterseim, Daniel},
   title={Localization of elliptic multiscale problems},
   journal={Math. Comp.},
   volume={83},
   date={2014},
   number={290},
   pages={2583--2603},
   issn={0025-5718},
   review={\MR{3246801}},
   doi={10.1090/S0025-5718-2014-02868-8},
}

\bib{MR2277024}{article}{
   author={Mandel, Jan},
   author={Soused\'\i k, Bed\v rich},
   title={Adaptive selection of face coarse degrees of freedom in the BDDC
   and the FETI-DP iterative substructuring methods},
   journal={Comput. Methods Appl. Mech. Engrg.},
   volume={196},
   date={2007},
   number={8},
   pages={1389--1399},
   issn={0045-7825},
   review={\MR{2277024}},
   doi={10.1016/j.cma.2006.03.010},
}

\bib{MR2334131}{article}{
   author={Mandel, Jan},
   author={Soused\'\i k, Bed\v rich},
   title={Adaptive coarse space selection in the BDDC and the FETI-DP
   iterative substructuring methods: optimal face degrees of freedom},
   conference={
      title={Domain decomposition methods in science and engineering XVI},
   },
   book={
      series={Lect. Notes Comput. Sci. Eng.},
      volume={55},
      publisher={Springer, Berlin},
   },
   date={2007},
   pages={421--428},
   review={\MR{2334131}},
   doi={10.1007/978-3-540-34469-8\_52},
}

\bib{MR3989895}{article}{
   author={Madureira, Alexandre},
   author={Sarkis, Marcus},
   title={Adaptive deluxe BDDC mixed and hybrid primal discretizations},
   conference={
      title={Domain decomposition methods in science and engineering XXIV},
   },
   book={
      series={Lect. Notes Comput. Sci. Eng.},
      volume={125},
      publisher={Springer, Cham},
   },
   date={2018},
   pages={465--473},
   review={\MR{3989895}},
}

\bib{ZAMPINI}{article}{
   author={Oh, Duk-Soon},
   author={Widlund, Olof B.},
   author={Zampini, Stefano},
   author={Dohrmann, Clark R.},
   title={BDDC algorithms with deluxe scaling and adaptive selection of
   primal constraints for Raviart-Thomas vector fields},
   journal={Math. Comp.},
   volume={87},
   date={2018},
   number={310},
   pages={659--692},
   issn={0025-5718},
   review={\MR{3739213}},
   doi={10.1090/mcom/3254},
}

\bib{MR3177856}{article}{
   author={Owhadi, Houman},
   author={Zhang, Lei},
   author={Berlyand, Leonid},
   title={Polyharmonic homogenization, rough polyharmonic splines and sparse
   super-localization},
   journal={ESAIM Math. Model. Numer. Anal.},
   volume={48},
   date={2014},
   number={2},
   pages={517--552},
   issn={0764-583X},
   review={\MR{3177856}},
   doi={10.1051/m2an/2013118},
}

\bib{MR3584539}{article}{
   author={Paredes, Diego},
   author={Valentin, Fr\'ed\'eric},
   author={Versieux, Henrique M.},
   title={On the robustness of multiscale hybrid-mixed methods},
   journal={Math. Comp.},
   volume={86},
   date={2017},
   number={304},
   pages={525--548},
   issn={0025-5718},
   review={\MR{3584539}},
   doi={10.1090/mcom/3108},
}

\bib{MR3013465}{book}{
   author={Pechstein, Clemens},
   title={Finite and boundary element tearing and interconnecting solvers
   for multiscale problems},
   series={Lecture Notes in Computational Science and Engineering},
   volume={90},
   publisher={Springer, Heidelberg},
   date={2013},
   pages={xiv+312},
   isbn={978-3-642-23587-0},
   isbn={978-3-642-23588-7},
   review={\MR{3013465}},
   doi={10.1007/978-3-642-23588-7},
}

\bib{MR3678572}{article}{
   author={Pechstein, Clemens},
   author={Dohrmann, Clark R.},
   title={A unified framework for adaptive BDDC},
   journal={Electron. Trans. Numer. Anal.},
   volume={46},
   date={2017},
   pages={273--336},
   review={\MR{3678572}},
}

\bib{MR3047947}{article}{
   author={Pechstein, Clemens},
   author={Scheichl, Robert},
   title={Weighted Poincar\'e inequalities},
   journal={IMA J. Numer. Anal.},
   volume={33},
   date={2013},
   number={2},
   pages={652--686},
   issn={0272-4979},
   review={\MR{3047947}},
   doi={10.1093/imanum/drs017},
}

\bib{MR2456834}{article}{
   author={Pechstein, Clemens},
   author={Scheichl, Robert},
   title={Analysis of FETI methods for multiscale PDEs},
   journal={Numer. Math.},
   volume={111},
   date={2008},
   number={2},
   pages={293--333},
   issn={0029-599X},
   review={\MR{2456834}},
   doi={10.1007/s00211-008-0186-2},
}

\bib{MR2810804}{article}{
   author={Pechstein, Clemens},
   author={Scheichl, Robert},
   title={Analysis of FETI methods for multiscale PDEs. Part II: interface
   variation},
   journal={Numer. Math.},
   volume={118},
   date={2011},
   number={3},
   pages={485--529},
   issn={0029-599X},
   review={\MR{2810804}},
   doi={10.1007/s00211-011-0359-2},
}

\bib{MR3552482}{article}{
   author={Peterseim, Daniel},
   author={Scheichl, Robert},
   title={Robust numerical upscaling of elliptic multiscale problems at high
   contrast},
   journal={Comput. Methods Appl. Math.},
   volume={16},
   date={2016},
   number={4},
   pages={579--603},
   issn={1609-4840},
   review={\MR{3552482}},
   doi={10.1515/cmam-2016-0022},
}

\bib{MR0431752}{article}{
   author={Raviart, P.-A.},
   author={Thomas, J. M.},
   title={Primal hybrid finite element methods for $2$nd order elliptic
   equations},
   journal={Math. Comp.},
   volume={31},
   date={1977},
   number={138},
   pages={391--413},
   issn={0025-5718},
   review={\MR{0431752 (55 \#4747)}},
}

\bib{MR2030161}{article}{
   author={Sangalli, Giancarlo},
   title={Capturing small scales in elliptic problems using a residual-free
   bubbles finite element method},
   journal={Multiscale Model. Simul.},
   volume={1},
   date={2003},
   number={3},
   pages={485--503 (electronic)},
   issn={1540-3459},
   review={\MR{2030161 (2004m:65202)}},
}

\bib{MR2383203}{article}{
   author={Sarkis, Marcus},
   author={Versieux, Henrique},
   title={Convergence analysis for the numerical boundary corrector for
   elliptic equations with rapidly oscillating coefficients},
   journal={SIAM J. Numer. Anal.},
   volume={46},
   date={2008},
   number={2},
   pages={545--576},
   issn={0036-1429},
   review={\MR{2383203}},
}
   
\bib{MR2861254}{article}{
   author={Scheichl, Robert},
   author={Vassilevski, Panayot S.},
   author={Zikatanov, Ludmil T.},
   title={Weak approximation properties of elliptic projections with
   functional constraints},
   journal={Multiscale Model. Simul.},
   volume={9},
   date={2011},
   number={4},
   pages={1677--1699},
   issn={1540-3459},
   review={\MR{2861254}},
   doi={10.1137/110821639},
}

\bib{MR3175183}{article}{
   author={Spillane, N.},
   author={Dolean, V.},
   author={Hauret, P.},
   author={Nataf, F.},
   author={Pechstein, C.},
   author={Scheichl, R.},
   title={Abstract robust coarse spaces for systems of PDEs via generalized
   eigenproblems in the overlaps},
   journal={Numer. Math.},
   volume={126},
   date={2014},
   number={4},
   pages={741--770},
   issn={0029-599X},
   review={\MR{3175183}},
   doi={10.1007/s00211-013-0576-y},
}

\bib{MR3089678}{article}{
   author={Spillane, Nicole},
   author={Dolean, Victorita},
   author={Hauret, Patrice},
   author={Nataf, Fr\'ed\'eric},
   author={Rixen, Daniel J.},
   title={Solving generalized eigenvalue problems on the interfaces to build
   a robust two-level FETI method},
   language={English, with English and French summaries},
   journal={C. R. Math. Acad. Sci. Paris},
   volume={351},
   date={2013},
   number={5-6},
   pages={197--201},
   issn={1631-073X},
   review={\MR{3089678}},
   doi={10.1016/j.crma.2013.03.010},
}

\bib{MR2104179}{book}{
   author={Toselli, Andrea},
   author={Widlund, Olof},
   title={Domain decomposition methods---algorithms and theory},
   series={Springer Series in Computational Mathematics},
   volume={34},
   publisher={Springer-Verlag, Berlin},
   date={2005},
   pages={xvi+450},
   isbn={3-540-20696-5},
   review={\MR{2104179 (2005g:65006)}},
}

\bib{MR2093409}{book}{
   author={Varga, Richard S.},
   title={Ger\v sgorin and his circles},
   series={Springer Series in Computational Mathematics},
   volume={36},
   publisher={Springer-Verlag, Berlin},
   date={2004},
   pages={x+226},
   isbn={3-540-21100-4},
   review={\MR{2093409}},
   doi={10.1007/978-3-642-17798-9},
}

\bib{vs2}{article}{
   author={Versieux, Henrique},
   author={Sarkis, Marcus},
   title={A three-scale finite element method for elliptic equations with
   rapidly oscillating periodic coefficients},
   conference={
      title={Domain decomposition methods in science and engineering XVI},
   },
   book={
      series={Lect. Notes Comput. Sci. Eng.},
      volume={55},
      publisher={Springer},
      place={Berlin},
   },
   date={2007},
   pages={763--770},
   review={\MR{2334173}},
}

\bib{vs1}{article}{
   author={Versieux, H. M.},
   author={Sarkis, M.},
   title={Numerical boundary corrector for elliptic equations with rapidly
   oscillating periodic coefficients},
   journal={Comm. Numer. Methods Engrg.},
   volume={22},
   date={2006},
   number={6},
   pages={577--589},
   issn={1069-8299},
   review={\MR{2235030 (2007d:65117)}},
}

\bib{MR2740478}{article}{
   author={Wang, Wei},
   author={Guzm{\'a}n, Johnny},
   author={Shu, Chi-Wang},
   title={The multiscale discontinuous Galerkin method for solving a class
   of second order elliptic problems with rough coefficients},
   journal={Int. J. Numer. Anal. Model.},
   volume={8},
   date={2011},
   number={1},
   pages={28--47},
   issn={1705-5105},
   review={\MR{2740478 (2012a:65346)}},
}

\bib{MR1759911}{article}{
   author={Wohlmuth, Barbara I.},
   author={Toselli, Andrea},
   author={Widlund, Olof B.},
   title={An iterative substructuring method for Raviart-Thomas vector
   fields in three dimensions},
   journal={SIAM J. Numer. Anal.},
   volume={37},
   date={2000},
   number={5},
   pages={1657--1676 (electronic)},
   issn={0036-1429},
   review={\MR{1759911}},
   doi={10.1137/S0036142998347310},
}
\end{biblist}
\end{bibdiv}

\end{document}